\documentclass[a4paper,leqno,12pt,twoside]{amsart}
\usepackage{amsmath,amsfonts,amssymb,amsthm,amscd}
\usepackage[utf8]{inputenc}
\usepackage[T1]{fontenc}
\usepackage[english]{babel}
\usepackage[colorlinks=true,citecolor=blue, urlcolor=blue, linkcolor=blue,pagebackref]{hyperref}
\usepackage[top=1.2in,bottom=1.2in,left=1.in,right=1.in]{geometry} 
\usepackage{graphicx}
\usepackage{paralist}
\usepackage{tabto}
\usepackage{standalone}
\usepackage{tikz}
\usetikzlibrary{matrix}
\usetikzlibrary{arrows}
\usepackage{xfrac}
\usepackage{amsthm, amsmath, amssymb,latexsym}
\usepackage{tikz-cd}
\usepackage[alphabetic,backrefs,lite]{amsrefs}
\usepackage[all]{xy} 

\usepackage{enumerate}
\usepackage{xcolor}
\usepackage{aliascnt}

\newtheorem{theorem}{Theorem}[section]
\newtheorem{lemma}[theorem]{Lemma}
\newtheorem{corollary}[theorem]{Corollary}
\newtheorem{proposition}[theorem]{Proposition}
\newtheorem{remark}[theorem]{Remark}
\newtheorem{definition}[theorem]{Definition}

 
\newcommand{\nc}{\newcommand} 
\nc{\cH}{{\mathcal H}}
\nc{\cA}{{\mathcal A}}
\nc{\cG}{{\mathcal G}}
\nc{\cC}{{\mathcal C}}
\nc{\cO}{{\mathcal O}}
\nc{\cI}{{\mathcal I}}
\nc{\cB}{{\mathcal B}}
\nc{\cY}{{\mathcal Y}}
\nc{\cK}{{\mathcal K}} 
\nc{\cX}{{\mathcal X}}
\nc{\cS}{{\mathcal S}}
\nc{\cE}{{\mathcal E}}
\nc{\cF}{{\mathcal F}}
\nc{\cZ}{{\mathcal Z}}
\nc{\cQ}{{\mathcal Q}}
\nc{\cN}{{\mathcal N}}
\nc{\cP}{{\mathcal P}}
\nc{\cL}{{\mathcal L}}
\nc{\cM}{{\mathcal M}}
\nc{\cT}{{\mathcal T}}
\nc{\cW}{{\mathcal W}}
\nc{\cU}{{\mathcal U}}
\nc{\cJ}{{\mathcal J}}
\nc{\cV}{{\mathcal V}}
\nc{\bH}{{\mathbb H}}
\nc{\bA}{{\mathbb A}}
\nc{\bG}{{\mathbb G}}
\nc{\bC}{{\mathbb C}}
\nc{\bO}{{\mathbb O}}
\nc{\bI}{{\mathbb I}}
\nc{\bB}{{\mathbb B}}
\nc{\bY}{{\mathbb Y}}
\nc{\bK}{{\mathbb K}} 
\nc{\bX}{{\mathbb X}}
\nc{\bS}{{\mathbb S}}
\nc{\bE}{{\mathbb E}}
\nc{\bF}{{\mathbb F}}
\nc{\bZ}{{\mathbb Z}}
\nc{\bQ}{{\mathbb Q}}
\nc{\bN}{{\mathbb N}}
\nc{\bP}{{\mathbb P}}
\nc{\bL}{{\mathbb L}}
\nc{\bM}{{\mathbb M}}
\nc{\bT}{{\mathbb T}}
\nc{\bW}{{\mathbb W}}
\nc{\bU}{{\mathbb U}}
\nc{\bD}{{\mathbb D}}
\nc{\bJ}{{\mathbb J}}
\nc{\bV}{{\mathbb V}}
\nc{\bbZ}{{\mathbb Z}}
\nc{\bR}{{\mathbb R}}
\nc{\fr}{{\rightarrow}}
\nc{\co}{{\nabla}}
\newcommand{\debar}{{\bar\partial}}

\nc{\cu}{{\barline{\nabla}}}



\title{Asymptotic directions in the moduli space of curves}

\author{E. Colombo} 

\address{Elisabetta Colombo \\ Universit\`a degli Studi di Milano  \\ Dipartimento di Matematica \\ Via Cesare Saldini 50  \\   20133 Milano, Italy }
\email{elisabetta.colombo@unimi.it}

\author{P. Frediani}
\address{Paola Frediani  \\ Universit\`a degli Studi di Pavia  \\ Dipartimento di Matematica \\ Via Ferrata 1  \\ 27100 Pavia, Italy  }
 \email{paola.frediani@unipv.it}

 \author{G.P. Pirola}
 \address{Gian Pietro Pirola  \\ Universit\`a degli Studi di Pavia  \\ Dipartimento di Matematica \\ Via Ferrata 1  \\ 27100 Pavia, Italy  }
 \email{gianpietro.pirola@unipv.it}

\begin{document}

\begin{abstract}
In this paper we study asymptotic directions in the tangent bundle of the moduli space ${\mathcal M}_g$ of curves of genus $g$, namely those tangent directions that are annihilated by the second fundamental form of the Torelli map. We give examples of asymptotic directions for any $g \geq 4$. We prove that if the rank $d$ of a tangent direction $\zeta \in H^1(T_C)$ (with respect to the infinitesimal deformation map) is less than the Clifford index of the curve $C$, then $\zeta$ is not asymptotic. If the rank of $\zeta$ is equal to the Clifford index of the curve, we give sufficient conditions ensuring that the infinitesimal deformation $\zeta$ is not asymptotic. Then we determine all asymptotic directions of rank 1 and we give an almost complete description of asymptotic directions of rank 2. 

\end{abstract}

\thanks{E. Colombo, P. Frediani and G.P. Pirola are members of GNSAGA (INdAM) and are partially supported by PRIN project {\em Moduli spaces and special varieties} (2022).}

\maketitle

\section{Introduction}

In this paper we study the local geometry of the Torelli locus in
${\mathcal A}_g$. Following the philosophy of Griffiths, the local
geometry of the period map often contains information about the global
geometry (see \cite{griffiths1}, \cite{griffiths2}, \cite{griffiths},
\cite{voisin}).  We consider ${\mathcal A}_g$ endowed with the Siegel
metric, that is the orbifold metric induced by the symmetric metric on
the Siegel space ${\mathcal H}_g = Sp(2g, {\mathbb R})/U(g)$ of which
${\mathcal A}_g$ is a quotient by the action of $Sp(2g, {\mathbb
  Z})$. Denote by $j: {\mathcal M}_g \rightarrow {\mathcal A}_g$ the
Torelli map. The Torelli locus is the closure of the image of $j$. The
local geometry of the map $j$ is governed by the second fundamental
form, which  at a non hyperelliptic curve $C$ of genus $g$,  is a
linear map
$$
II: I_2 \rightarrow Sym^2H^0(K_C^{\otimes 2}),
$$
where $I_2$ is the vector space of quadrics containing the canonical
curve.

One of the leading problems in the area is to study totally geodesic subvarieties of ${\mathcal A}_g$ generically contained in the Torelli locus. 

This problem is related to the Coleman-Oort conjecture according to which for $g$ sufficiently high there should not exist special (or Shimura) subvarieties of ${\mathcal A}_g$ generically contained in the Torelli locus. We recall that special subvarieties of ${\mathcal A}_g$ are totally geodesic, hence in the last years the study of the second fundamental form has been used to attack this problem. In particular estimates on the maximal dimension of a totally geodesic subvariety of ${\mathcal A}_g$ generically contained in the Torelli locus have been given in \cite{cfg}, \cite{gpt}, \cite{fp}.

In this paper  we take a different point of view.
The image of $II$ in $Sym^2H^1(T_C)^{\vee}$ is a linear system of quadrics in ${\mathbb P}H^1(T_C) \cong {\mathbb P}^{3g-4} $. This paper is devoted to the study of the base locus  of this linear system of quadrics. 
Following the terminology of differential geometry we call \emph{asymptotic direction}  a nonzero tangent direction $\zeta \in H^1(T_C)$ such that  $II(Q)(\zeta \odot \zeta) =0$ for all $Q \in I_2$. So asymptotic directions correpond to points in the base locus.

Clearly a tangent direction to a totally geodesic subvariety is
asymptotic.  But the locus of asymptotic directions in the projective
tangent bundle of the moduli space of curves ${\mathcal M}_g $ is a
natural locus, that is worthwhile investigating.  For example we
believe that asymptotic directions could also be useful in the study
of fibred surfaces in relation with Xiao conjecture (\cite{Xiao},
\cite{gt}, see Remark \ref{xiao}).

Since $II$ is injective (see \cite[Corollary 3.4]{cf-trans}), its
image in $Sym^2H^1(T_C)^{\vee}$ is a linear system of quadrics in
${\mathbb P}H^1(T_C) \cong {\mathbb P}^{3g-4} $ of dimension
$\frac{(g-2)(g-3)}{2}$.  Hence for every curve $C$ of genus
$g \leq 9$, $dim( II(I_2)) < 3g-4$, so the intersections of the
quadrics in $II(I_2)$ is non empty, thus there exist asymptotic
directions. In fact for $g\leq 7$ there are examples of special
subvarieties of ${\mathcal A}_g$ generically contained in the Torelli
locus for $g \leq 7$ (see \cite{dejong-noot}, \cite{fgp}, \cite{fgs},
\cite{fpp}, \cite{moonen-special}, \cite{moonen-oort}, \cite{rohde},
\cite{shimura}, \cite{spelta}).

On the other hand, for high values of $g$ one would expect that the intersection of a space of  quadrics  of dimension $\frac{(g-2)(g-3)}{2}$ in ${\mathbb P}^{3g-4} $ would be empty. 

One main result of this paper is to show that this is not always the case. Indeed, for all $g$ there are examples of asymptotic directions given by the Schiffer variations at the ramifications points of the $g^1_3$'s for trigonal curves and by linear combinations of two Schiffer variations on bielliptic curves (see Lemma \ref{lem_schiffer} and Theorem \ref{bielliptic}). 

This is rather unexpected and intriguing and indicates that understanding the geometry
of the locus of asymptotic directions is important.  Especially
finding new examples of curves admitting asymptotic directions would
be very interesting.

In this paper, using the Hodge Gaussian maps introduced in \cite{cpt}, we develop a new technique to calculate the second fundamental form $II(Q)(\zeta \odot \zeta)$ on certain tangent directions $\zeta$ different from Schiffer variations, computing some residues of meromorphic forms (see Proposition \ref{w}). 

This technique works for those tangent directions $\zeta$ whose rank
is less than $g$, where the rank of an infinitesimal deformation
$\zeta$ is the rank of the linear map
$$\cup \zeta: H^0(K_C) \rightarrow H^1({\mathcal O}_C)$$
given by the cup product.

One of the main results we obtain by application of these ideas is
the following

\begin{theorem} (Theorem \ref{cliff>d}, Theorem \ref{cliff-rank})
\label{Intro1}
Let $C$ be a smooth curve of genus $g \geq 4$,  take an integer $d < Cliff(C)$ and an infinitesimal deformation $\zeta \in H^1(T_C)$ of rank $d$. 
Then we have: 
\begin{enumerate}

\item $\zeta$ is a linear combination of (possibly higher) Schiffer variations supported on an effective  divisor $D$ of degree $d$. 
\item $\zeta$ is not asymptotic. 
\end{enumerate}
\end{theorem}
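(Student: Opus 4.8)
The plan is to prove (1) and (2) in turn: (1) is a statement about $\ker(\cup\zeta)$ together with classical facts on linear series, and then (2) follows from the residue formula of Proposition~\ref{w} and the Clifford bound.

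For (1), set $W:=\ker\big(\cup\zeta\colon H^0(K_C)\to H^1(\cO_C)\big)$, so that $\dim W=g-d$. By Serre duality, $\cup\zeta$ is the transpose of the symmetric bilinear form $b_\zeta$ on $H^0(K_C)$ obtained by composing the multiplication $Sym^2H^0(K_C)\to H^0(K_C^{\otimes 2})$ with $\zeta\in H^1(T_C)=H^0(K_C^{\otimes 2})^{\vee}$; hence $b_\zeta$ has rank $d$, its radical is $W$, and $\zeta$ annihilates $W\cdot H^0(K_C)\subseteq H^0(K_C^{\otimes 2})$. Let $D$ be the base divisor of the subsystem $|W|\subseteq|K_C|$, of degree $e$, so that $W\subseteq H^0(K_C(-D))$, the bundle $K_C(-D)$ is base--point--free, and $\dim W\le h^0(K_C(-D))$. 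Riemann--Roch gives $h^0(D)\ge e-d+1$, while $h^1(D)=h^0(K_C(-D))\ge g-d\ge 2$; so if $h^0(D)\ge 2$, then $D$ is a divisor contributing to the Clifford index and $Cliff(C)\le e-2(h^0(D)-1)\le d+1-h^0(D)\le d-1$, contradicting $d<Cliff(C)$. Therefore $h^0(D)=1$, $e\le d$, and $h^0(K_C(-D))=g-e$.

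Next I would show that $e=d$. If $e<d$, then $W$ is a base--point--free subsystem of $|K_C(-D)|$ of codimension $d-e<Cliff(C)-e$, and by a multiplication lemma (proved via the base--point--free pencil trick and Petri's description of $I_2$, and valid exactly in this range) the map $W\otimes H^0(K_C)\to H^0(K_C^{\otimes 2}(-D))$ is surjective; then $\zeta$ annihilates $H^0(K_C^{\otimes 2}(-D))$, whence $\ker(\cup\zeta)\supseteq H^0(K_C(-D))$, a space which strictly contains $W$ --- a contradiction. Hence $\deg D=d$ and $W=H^0(K_C(-D))$. Applying the same lemma once more (with $W=H^0(K_C(-D))$, of codimension $0$), the map $H^0(K_C(-D))\otimes H^0(K_C)\to H^0(K_C^{\otimes 2}(-D))$ is surjective, so $\zeta$ annihilates $H^0(K_C^{\otimes 2}(-D))$; equivalently $\zeta$ lies in the annihilator of $H^0(K_C^{\otimes 2}(-D))$ inside $H^0(K_C^{\otimes 2})^{\vee}$, which is precisely the $d$--dimensional subspace spanned by the (possibly higher) Schiffer variations supported on $D$. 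This proves (1), and conversely a general element of that subspace has rank $d$.

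For (2), by (1) I may assume that $\zeta$ is such a combination supported on an effective divisor $D$ with $\deg D=d<Cliff(C)$, and that $\cup\zeta$ has rank exactly $d$. Since $\zeta$ kills $H^0(K_C^{\otimes 2}(-D))$, the value $II(Q)(\zeta\odot\zeta)$ depends only on the image $\overline{II(Q)}$ of $II(Q)$ under the restriction $Sym^2H^0(K_C^{\otimes 2})\to Sym^2 U$, where $U:=H^0(K_C^{\otimes 2})/H^0(K_C^{\otimes 2}(-D))$ has dimension $d$; moreover $\zeta$ induces a nonzero functional $\bar\zeta\in U^{\vee}$, so $\bar\zeta\odot\bar\zeta\ne 0$ in $Sym^2 U^{\vee}$. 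It therefore suffices to show that the induced map $\overline{II}\colon I_2\to Sym^2 U$ has image not annihilated by $\bar\zeta\odot\bar\zeta$ --- for instance, that $\overline{II}$ is surjective. To establish this I would invoke Proposition~\ref{w}: since $\zeta$ has rank $d<g$, it expresses each $II(Q)(\zeta\odot\zeta)$ as a sum of residues at the points of $D$ of an explicit meromorphic form built from $Q$ and $\zeta$, and then, choosing quadrics $Q\in I_2$ adapted to $D$, one checks that these residues do not all vanish. The room to make such choices is guaranteed precisely by $d<Cliff(C)$: if every $Q\in I_2$ produced vanishing residues along $D$, one would extract a nonzero section of a sub--line--bundle of $K_C$ --- equivalently, a special linear series on $C$ of Clifford index $\le d$ --- contradicting $d<Cliff(C)$. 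The step I expect to be the main obstacle is exactly this last one: carrying out the residue computation of Proposition~\ref{w} for a general Schiffer combination supported on $D$ and showing that the simultaneous vanishing of all the resulting residues is incompatible with $d<Cliff(C)$ is where the geometry of $D$ and the Clifford index genuinely interact, and it is the technical heart of the argument; in (1), the analogous delicate point is the multiplication lemma for base--point--free subsystems of $|K_C(-D)|$, which again rests on the inequality $d<Cliff(C)$.
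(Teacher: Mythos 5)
Your reduction of part (1) to the statement ``$\zeta$ annihilates $H^0(K_C^{\otimes 2}(-D))$'' is a legitimate reformulation (the annihilator of $H^0(K_C^{\otimes 2}(-D))$ in $H^1(T_C)=H^0(K_C^{\otimes 2})^{\vee}$ is indeed the span of the Schiffer variations supported on $D$), and your opening steps --- $\dim W=g-d$, $h^0(D)=1$ and $e\le d$ via the Clifford bound --- are correct. But the whole argument then rests on an unproved ``multiplication lemma'': surjectivity of $W\otimes H^0(K_C)\to H^0(K_C^{\otimes 2}(-D))$ for a base-point-free subsystem $W\subseteq H^0(K_C(-D))$ of codimension $d-e$. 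This does not follow from the tools you cite. The base-point-free pencil trick applied to a pencil in $H^0(K_C(-D))$ gives an image of dimension $2g-1$ inside a target of dimension $3g-3-e$, which is far from surjective since $e\le d<Cliff(C)\le\frac{g-1}{2}$; and Green's $H^0$-lemma requires $h^1(\cO_C(D))\le\dim W-2$, i.e. $g-e\le g-d-2$, which is exactly the wrong inequality here. So both the step forcing $e=d$ and the final step (the case $W=H^0(K_C(-D))$) are open in your write-up. The paper avoids multiplication maps entirely for part (1): it works with the rank-$2$ bundle $E$ of the extension \eqref{extension}, uses $h^0(E)=g-d+1$ and the Segre--Nagata/Ghione bound $\deg A\ge\frac{g-1}{2}$ for a sub-line bundle $A\subseteq E$, and shows via the Clifford hypothesis that $h^0(K_C\otimes A^{\vee})=1$, so $K_C\otimes A^{\vee}=\cO_C(D)$ with $\deg D=d$; the resulting splitting of the pulled-back extension over $K_C(-D)$ places $\zeta$ in the image of $H^0(T_C(D)_{|D})$. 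If you want to keep your route, you must actually prove your lemma; note that the paper's Lemma \ref{castelnuovo} (Green--Lazarsfeld) gives surjectivity of $Sym^2H^0(K_C(-D))\to H^0(K_C^{\otimes 2}(-2D))$, which is a different (and weaker, for your purposes) statement.

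For part (2) you have correctly identified the framework --- Proposition \ref{w} plus quadrics supplied by the hypothesis $d<Cliff(C)$ --- but you stop exactly where the proof begins, and your heuristic (``simultaneous vanishing of all residues would produce a linear series of Clifford index $\le d$'') is not how the contradiction is obtained. The paper argues by induction on $d$: with $F_d=D$ and $F_{d-1}=D-p_k$, projective normality of $K_C(-F_{d-1})$ and $K_C(-F_d)$ (Lemma \ref{castelnuovo}, valid precisely because $d<Cliff(C)$) gives $\dim(I_{d-1}/I_d)=g-d-1$, which guarantees a quadric $Q\in I_{d-1}$ of the explicit shape $\alpha_3\,\omega_1\odot\omega_3+\beta\,\omega_2\odot\omega_2+\cK'$ with $\alpha_3\neq 0$ in a basis adapted to vanishing orders at $p_k$; the residue formula then yields $II(Q)(\zeta\odot\zeta)=-c\,b_{m_k,k}$ with $c\neq 0$, killing the top Schiffer coefficient and closing the induction. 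Neither the existence of such $Q$ nor the nonvanishing of the residue is automatic, and your proposed alternative of proving surjectivity of $\overline{II}\colon I_2\to Sym^2U$ is stronger than needed and nowhere justified. As it stands, both halves of your argument are missing their decisive technical step.
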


For a definition of $n^{th}$-Schiffer variations see section \ref{section2}. 

Notice that the first part of the above result can be seen as  a generalisation of the generic Torelli theorem of Griffiths. In fact, denoting by $C_d$ the symmetric product of $C$, under the assumption $Cliff(C) > d$, we characterise the image of the natural map 
$${\mathbb P}T_{C_d}\to {\mathbb P}(H^1(T_C))$$
as the locus of deformations of rank  at most $d$. When $d=1$ it is the bicanonical curve. 

In particular, if a curve $C$ of genus $g$ has maximal Clifford index, equal to $\lfloor\frac{g-1}{2}\rfloor$, this 
gives a characterisation of the image of the natural map ${\mathbb P}T_{C_d}\to {\mathbb P}(H^1(T_C))$
as the locus of deformations of rank  at most $d$, for all $d< \lfloor\frac{g-1}{2}\rfloor$.  We recall that this applies to the general curve in moduli space, which has maximal Clifford index. 
Moreover, for such values of $d$, Theorem \ref{Intro1}(2), for curves of maximal Clifford index the base locus of the linear system of quadrics $II(I_2)$ in ${\mathbb P}H^1(T_C)$ does not contain any point $[\zeta]$ with  $Rank(\zeta) \leq d$.



In the case where the rank of $\zeta$ is equal to the Clifford index of the curve, we give sufficient conditions ensuring that the infinitesimal deformation $\zeta$ is not asymptotic (see Theorem \ref{cliff=d}). 

This result allows us to determine all asymptotic directions  of rank 1.  Notice that  by Theorem \ref {Intro1}, there can be asymptotic directions of rank 1 only for curves with Clifford index 1, namely trigonal curves or plane quintics. We have the following

\begin{theorem} (Theorem \ref{rank1} and Theorem \ref{quintic})
\label{theorem 1}
\begin{enumerate}
\item If $C$ is trigonal (non hyperelliptic) of genus $g \geq 8$, or of genus $g =6,7$ and Maroni degree $2$,  then  rank one asymptotic directions are exactly the Schiffer variations in the ramification points of the $g^1_3$. 
\item On a smooth plane quintic there are no rank one asymptotic directions. 
\end{enumerate}
\end{theorem}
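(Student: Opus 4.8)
The plan is to combine three facts already available: the structural dichotomy of Theorem~\ref{cliff>d}, which forces any curve carrying a rank one asymptotic direction to have Clifford index one; the positive examples of Lemma~\ref{lem_schiffer}; and the non-asymptoticity criterion of Theorem~\ref{cliff=d}, made effective by the residue formula of Proposition~\ref{w}. The first step is a reduction. Since $II$ is injective and the multiplication $Sym^2H^0(K_C)\to H^0(K_C^{\otimes2})$ is surjective (Max Noether), a short linear-algebra argument shows that the rank one locus in $\mathbb{P}H^1(T_C)\cong\mathbb{P}H^0(K_C^{\otimes2})^\vee$ is the image of the base locus $B\subset\mathbb{P}H^0(K_C)^\vee$ of $I_2$ under $[\ell]\mapsto[\ell\odot\ell]$, with $\xi_p$ corresponding to the canonical image of $p\in C\subset B$; moreover, for $\ell$ with $[\ell]\in B$, the number $II(Q)(\xi_\ell\odot\xi_\ell)$ is the value at $[\ell]$ of a quartic form lifting the image of $II(Q)$ under $Sym^2H^0(K_C^{\otimes2})\to H^0(K_C^{\otimes4})$ (the Hodge--Gaussian description, reducing on Schiffer variations to the classical second Gaussian map). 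By Enriques--Babbage--Petri, $B$ is the canonical curve itself unless $C$ is trigonal, when $B$ is the rational normal scroll, or a smooth plane quintic, when $B$ is the Veronese surface; these are exactly the non-hyperelliptic curves of Clifford index one, so by Theorem~\ref{cliff>d} only these two families need examination, and for each we must determine $B\cap\bigcap_{Q\in I_2}\{II(Q)(\xi_\bullet\odot\xi_\bullet)=0\}$.

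For part (1), the inclusion ``$\supseteq$'' is Lemma~\ref{lem_schiffer}. For ``$\subseteq$'', a rank one direction is $[\xi_\ell]$ with $\ell$ on the scroll and $Rank(\xi_\ell)=1=Cliff(C)$, so Theorem~\ref{cliff=d} applies once its hypotheses are verified at $\ell$. A scroll point lies on a unique ruling, a divisor of the pencil $A=g^1_3$; realising $\xi_\ell$ through $A$, I would pick a quadric $Q\in I_2$ adapted to the scroll, so that the cubic generator of the canonical ideal does not intervene, and compute $II(Q)(\xi_\ell\odot\xi_\ell)$ as a single residue via Proposition~\ref{w}. This residue is nonzero unless the ruling through $\ell$ is tangent to $C$ at $\ell$, that is, unless $\ell$ is the canonical image of a ramification point of the triple cover; in all remaining cases Theorem~\ref{cliff=d} then gives that $[\xi_\ell]$ is not asymptotic. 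The maximal rank of the multiplication (Petri-type) maps entering Theorem~\ref{cliff=d} and Proposition~\ref{w} is precisely what is ensured by $g\ge8$, or by $g\in\{6,7\}$ together with the stated Maroni degree; the remaining low-genus and unbalanced-scroll cases are exactly those where this positivity fails.

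For part (2), $g=6$, $K_C=\cO_C(2)$, $B=v_2(\mathbb{P}^2)\subset\mathbb{P}^5$ is the Veronese surface containing the plane quintic $C$, and $Cliff(C)=1$ is computed by the unique $A=\cO_C(1)=g^2_5$. Once more every rank one direction $[\xi_\ell]$, $\ell\in\mathbb{P}^2$, has rank $1=Cliff(C)$, so it suffices to check the hypotheses of Theorem~\ref{cliff=d} at each such $\ell$. Since the quadrics through the Veronese surface are very positive and the $g^2_5$ has no point playing the role of a trigonal ramification point, the verification is uniform in $\ell$: for every $\ell$ one exhibits an explicit $Q\in I_2$, arising from the Veronese relations of $\mathbb{P}^2\hookrightarrow\mathbb{P}^5$, with $II(Q)(\xi_\ell\odot\xi_\ell)\ne0$, so that $B$ meets the common zero locus of the quartics attached to $I_2$ in no point. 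Hence a smooth plane quintic has no rank one asymptotic direction.

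The main difficulty is the heart of part (1): singling out the right quadric on the trigonal scroll, carrying out the Hodge--Gaussian/residue evaluation of the second fundamental form on $\xi_\ell$, and then pinning down exactly for which pairs $(g,\text{Maroni degree})$ the multiplication maps involved have maximal rank. This is what forces the hypotheses $g\ge8$ and Maroni degree $2$ in genus $6,7$, and what makes it necessary to exclude genus $4,5$, where $II(I_2)$ is so small that its base locus meets $B$ in a positive-dimensional set; controlling this borderline behaviour is the delicate point.
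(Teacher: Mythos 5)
Your proposal follows essentially the same route as the paper: the inclusion of the ramification-point Schiffer variations and the elimination of all other Schiffer variations is Lemma~\ref{lem_schiffer} (via the second Gaussian map), and the non-Schiffer rank-one directions are killed by the split/double-split mechanism of Theorem~\ref{cliff=d} together with the residue evaluation of Proposition~\ref{w} (packaged in the paper as Proposition~\ref{applicazione}, which gives the uniform nonzero value $2\pi i\,\deg L$); your observation that $g\ge 8$, or Maroni degree $2$ in genus $6,7$, enters exactly through the maximal rank of the multiplication maps is correct, since those hypotheses are equivalent to $h^0(L^{\otimes 2})=3$ and $h^0(L^{\otimes 3})=4$.

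One caveat on part (2): you propose to ``check the hypotheses of Theorem~\ref{cliff=d}'' on the plane quintic, but those hypotheses are literally false there. That theorem requires $\mathrm{Cliff}(C)=\mathrm{gon}(C)-2$ with the Clifford index computed by a pencil $g^1_{d+2}$, whereas a smooth plane quintic has $\mathrm{gon}(C)=4$ and $\mathrm{Cliff}(C)=1$ computed by the $g^2_5$, so there is no $g^1_3$ to feed into it. This does not sink your argument, because the fallback you describe --- writing the direction attached to $\ell\in\mathbb{P}^2\setminus C$ as a double-split deformation via two lines through $\ell$ (here $K_C=L^{\otimes 2}$, so $t=1$) and evaluating the Veronese-relation quadric $s_1^2\odot s_2^2-(s_1s_2)^{\odot 2}$ by the residue formula --- is precisely the paper's proof of Theorem~\ref{quintic} and yields $II(Q)(\zeta\odot\zeta)=10\pi i\neq 0$; but the logical support must come from Propositions~\ref{prop_split} and~\ref{applicazione} directly, not from Theorem~\ref{cliff=d}.
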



We recall that the general trigonal curve of genus $g \geq 6$ has Maroni degree 2.
For trigonal curves of genus $g=5$ or $g=6,7$ and Maroni degree 1, we show that there can exist asymptotic directions that are not Schiffer variations in the ramification points of the $g^1_3$. We describe these asymptotic directions and we give the explicit equations of the trigonal curves admitting such asymptotic  directions (see Section \ref{Maroni}).

Finally we consider infinitesimal deformations of rank 2 and we prove the following 
\begin{theorem} (Theorem \ref{rank2}, Theorem \ref{sextic1})
\label{theorem 2}
\begin{enumerate}
\item Assume $C$ is tetragonal, of genus at least 16 and $C$ is not a double cover of a curve of genus 1 or 2.  If a deformation $\zeta$ of rank 2  is not a linear combination of Schiffer variations supported on an effective degree 2 divisor, then $\zeta$ is not asymptotic.  
\item On a smooth plane sextic there are no asymptotic directions of rank 2. 
\end{enumerate}
\end{theorem}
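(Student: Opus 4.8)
The proof splits into the tetragonal case (Part (1)) and the plane sextic case (Part (2)). In both, the strategy is to first describe the rank $2$ infinitesimal deformations and then feed the relevant ones into Theorem \ref{cliff=d}, using the residue formula of Proposition \ref{w} where that criterion does not literally apply. The classification is organized around $W:=\ker(\cup\zeta\colon H^0(K_C)\to H^1(\mathcal{O}_C))$, a subspace of codimension $2$, and its base locus. Regarding $\zeta$, via Serre duality, as a rank $2$ symmetric form on $H^0(K_C)$, write it as $m_1\odot m_2$ with $m_i\in H^1(\mathcal{O}_C)$ and set $L=\overline{m_1m_2}\subset\mathbb{P}H^1(\mathcal{O}_C)$. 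If the base locus of $W$ has degree $2$, then $W=H^0(K_C-D)$ with $D$ effective of degree $2$ and $L$ is the line spanned by $D$ in the canonical embedding (a $2$-secant, or the tangent line at $p$ if $D=2p$); since for a curve which is not trigonal the quadrics of $I_2$ cut out the canonical curve (Petri), the restrictions $Q|_L$ with $Q\in I_2$ fill a line of binary quadratic forms, and a short computation then shows that $\zeta$ is a linear combination of Schiffer variations (possibly higher ones, if $D$ is non-reduced) supported on $D$; for such $\zeta$ there is nothing to prove. Hence in both parts one is reduced to the case in which the base locus of $W$ has degree at most $1$.

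For Part (1), note that a tetragonal curve of genus $g\geq 16$ is neither hyperelliptic, nor trigonal, nor a smooth plane quintic, so $Cliff(C)=2$, computed by the $g^1_4$, say $|A|$, which for $g\geq 16$ is moreover the only type of linear system computing the Clifford index; thus a rank $2$ deformation has rank equal to $Cliff(C)$. In the remaining case $L$ meets the canonical curve in at most one point and, since $\zeta$ annihilates $I_2$, it imposes at most two conditions on $I_2$. Because the canonical curve of a tetragonal non-trigonal curve has no $(\geq 3)$-secant lines and four of its points lying in a plane would span a $g^1_4$, such an $L$ must lie in the plane spanned by the image of a fibre $A'$ of $|A|$ (this is where $g\geq 16$ is used, to exclude other small pencils or nets). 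So the ``exceptional'' rank $2$ deformations are exactly those supported on the $g^1_4$-scroll, with $W=H^0(K_C-A')$ up to the possible base point, and for these I would apply Theorem \ref{cliff=d} with $|A|$. The curves excluded in the statement --- double covers of a curve of genus $1$ or $2$ --- are precisely those for which $|A|$ is composed with a map of lower genus, which is exactly the degeneracy under which the hypotheses of Theorem \ref{cliff=d} fail, equivalently under which the residue produced by Proposition \ref{w} for a suitable scrollar quadric $Q\in I_2$ vanishes for structural reasons; under our hypotheses they hold --- $g\geq 16$ also securing the cohomological vanishings needed for Proposition \ref{w} and Theorem \ref{cliff=d} --- so $\zeta$ is not asymptotic.

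For Part (2), a smooth plane sextic $C$ has genus $10$, gonality $5$, and $Cliff(C)=2$, computed by $|\mathcal{O}_C(1)|=g^2_6$; in particular $C$ has no $g^1_3$ and no $g^1_4$. Since four points of the canonical curve lying in a plane would span a $g^1_4$, and $I_2$ cuts out the canonical curve of a plane sextic, there is no line meeting the curve in at most one point and imposing at most two conditions on $I_2$; hence by the classification above every rank $2$ deformation is a linear combination of Schiffer variations supported on a degree $2$ divisor $D$, and it remains to show that none of these is asymptotic. I would do this by a direct residue computation via Proposition \ref{w}: using $K_C=\mathcal{O}_C(3)$, $h^0(\mathcal{O}_C(1))=3$ and $h^0(\mathcal{O}_C(2))=6$, the cohomology entering Proposition \ref{w} is transparent; taking $Q\in I_2$ from the explicit description of the canonical ideal in terms of the cubic and sextic forms on $\mathbb{P}^2$, the residue at the points of $D$ is expressed through the conic and cubic sections of the plane model near those points (for $D=2p$ the second Schiffer variation and a tangent line of $C$ enter), and one checks that for every $D$ some choice of $Q$ makes the residue nonzero. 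Since $g=10$ this is a finite verification.

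The delicate point is the structural step in Part (1): showing that a rank $2$ deformation whose kernel has base locus of degree at most $1$ must come from the $g^1_4$-scroll, with no sporadic families for special tetragonal curves, and matching the locus where the hypotheses of Theorem \ref{cliff=d} break down exactly with the double covers of curves of genus $1$ and $2$. This rests on controlling the multisecant geometry of the canonical model of a tetragonal curve in terms of its scrollar invariants, and it is here that the bound $g\geq 16$ is genuinely used; I expect the bielliptic curves and the double covers of genus $2$ to require separate inspection, since there the analogue of Theorem \ref{bielliptic} shows that some of these rank $2$ deformations are actually asymptotic --- which is exactly why those curves must be excluded.
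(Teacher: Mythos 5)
Your Part (2) classification is incorrect, and this is the most serious gap. You claim that on a plane sextic every rank $2$ deformation is a linear combination of Schiffer variations supported on a degree $2$ divisor, because ``there is no line meeting the curve in at most one point and imposing at most two conditions on $I_2$''. This is false: Proposition \ref{sextic} of the paper shows that the rank $2$ deformations not in $Sec(C)$ are in bijection with the length-$2$ schemes $Z\in Hilb^2({\mathbb P}^2)$ whose support is \emph{disjoint} from $C$; each such $Z$ produces a rank $2$ deformation with $\ker(\cup\zeta)\cong H^0({\mathcal I}_Z(3)_{|C})$, which is base-point free and is not a combination of Schiffer variations. Geometrically, the relevant conjugate pairs live on the Veronese surface $v_3({\mathbb P}^2)\subset{\mathbb P}^9$ (the quadric hull of the canonical curve), not on the canonical curve itself, so your ``four points in a plane span a $g^1_4$'' heuristic does not detect them. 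Your proposed finite verification over Schiffer combinations therefore misses an entire four-dimensional family of rank $2$ directions; the paper handles these by writing the conic through $Z$ as a product of two lines, showing $\zeta$ is double-split, and invoking Proposition \ref{applicazione}.

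In Part (1) the two load-bearing steps are both asserted rather than proven. First, the claim that a rank $2$ deformation whose kernel has base locus of degree at most $1$ must be supported on the $g^1_4$-scroll is exactly the delicate multisecant statement you flag, and you give no argument for it (and its analogue fails for the plane sextic, as above, which should make you suspicious). The paper avoids this entirely: Theorem \ref{cliff-rank} and Remark \ref{bau}, via the Segre--Nagata--Ghione subbundle of the extension bundle $E$, show that if $\zeta$ is not a Schiffer combination then $E$ is globally generated, and the proof of Theorem \ref{cliff=d} then produces the splitting with respect to a $g^1_4$ with no secant-variety geometry. Second, to apply Theorem \ref{cliff=d} one must verify that $h^0(L^{\otimes 2})=3$ and $h^0(L^{\otimes 3})=4$ for every $g^1_4$ $L$; this verification is the entire content of the paper's proof of Theorem \ref{rank2} (a $g^3_8$ or $g^4_{12}$ cannot be birational by Castelnuovo's bound once $g\geq 16$, cannot have too large a degree for image-degree reasons, and the degree $2$ cases are exactly the excluded double covers of curves of genus $1$ or $2$), and it is here --- not in multisecant geometry --- that $g\geq 16$ and the double-cover exclusion are actually used. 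You assert these hypotheses hold ``under our hypotheses'' without the computation, so even granting your classification the proof is incomplete.
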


For bielliptic curves we show the following 
\begin{theorem} (see Theorem \ref{bielliptic})
\label{theorem 3}
On any bielliptic curve of genus at least 5 there exist linear combinations of two Schiffer variations that are asymptotic  of rank 2. 

\end{theorem}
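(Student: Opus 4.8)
The plan is to produce, on every bielliptic curve $C$ of genus $g\ge 5$, an explicit rank two deformation of the shape $\zeta=a\,\xi_p+b\,\xi_q$, where $\xi_p,\xi_q$ are Schiffer variations at two points exchanged by the bielliptic involution, and to force $II(Q)(\zeta\odot\zeta)=0$ for all $Q\in I_2$ out of the symmetry of the situation.

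First the set-up. Let $f\colon C\to E$ be the bielliptic cover, $\sigma$ its (unique, since $g\ge4$) involution, and $B\subset E$ the branch divisor; recall $C$ is non-hyperelliptic (Castelnuovo--Severi) and $Cliff(C)=2$. Writing $f_*K_C=\cO_E\oplus L$, $\deg L=g-1$, the $\sigma$-eigenspace decomposition is $H^0(K_C)=H^0(K_C)^+\oplus H^0(K_C)^-$ with $H^0(K_C)^+=\bC\,\omega_0$, $\omega_0:=f^*\eta$ nowhere vanishing ($\eta$ a holomorphic $1$-form on $E$), and $H^0(K_C)^-\cong H^0(E,L)$ of dimension $g-1\ge4$. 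Pick $e_0\in E\setminus B$ with $f^{-1}(e_0)=\{p,q\}$, $q=\sigma(p)$; take as coordinate at $p$ the pull-back $z=f^*t$ of a primitive $t$ of $\eta$, and $w:=\sigma^*z=f^*t$ as coordinate at $q$, so that $\sigma^*\xi_p=\xi_q$ for the corresponding Schiffer variations $\xi_p=[\tfrac1z\partial_z]$, $\xi_q=[\tfrac1w\partial_w]$, while $\omega_0=dz$ near $p$ and $\omega_0=dw$ near $q$ is locally constant and every $\omega_i\in H^0(K_C)^-$ reads $f_i(z)\,dz$ near $p$ and $-f_i(w)\,dw$ near $q$. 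I would first check that $\zeta=a\,\xi_p+b\,\xi_q$ has rank $2$ for all $ab\ne0$: since $|L|$ is base-point free there is $\omega_1\in H^0(K_C)^-$ with $\omega_1(p)\ne0$, and then $\omega_0-\tfrac{\omega_0(p)}{\omega_1(p)}\omega_1$ vanishes at $p$ but not at $q$ (using $\omega_1(q)=-\omega_1(p)$, $\omega_0(q)=\omega_0(p)$ in these coordinates), so the evaluation functionals at $p$ and at $q$ are independent on $H^0(K_C)$, whence $Rank(\zeta)=2$.

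Next I would reduce the statement to a proportionality. Since $\sigma$ is an isometry of the intersection form the above decomposition is orthogonal, and as $\omega_0$ is nowhere zero multiplication by $\omega_0$ is injective; hence the multiplication $Sym^2H^0(K_C)\to H^0(2K_C)$ is injective on the $\sigma$-anti-invariant summand $\bC\,\omega_0\otimes H^0(K_C)^-$, so $I_2=\ker$ is entirely $\sigma$-invariant. Equivariance of $II$ together with $\sigma^*\xi_p=\xi_q$ then gives, for every $Q\in I_2$, $II(Q)(\xi_p\odot\xi_p)=II(Q)(\xi_q\odot\xi_q)=:\alpha(Q)$, so that $II(Q)(\zeta\odot\zeta)=(a^2+b^2)\,\alpha(Q)+2ab\,\beta(Q)$ where $\beta(Q):=II(Q)(\xi_p\odot\xi_q)$. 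Moreover $\alpha\not\equiv0$ on $I_2$: otherwise $\xi_p$ would be an asymptotic direction of rank $1$, contradicting Theorem \ref{Intro1}(2) applied with $d=1<2=Cliff(C)$. So the theorem reduces to showing that $\beta$ is a scalar multiple of $\alpha$ on $I_2$: writing $\beta=\lambda\alpha$, the quadratic $(a^2+b^2)+2\lambda ab=0$ always has a root with $ab\ne0$, and for such $(a:b)$ the deformation $\zeta$ is asymptotic of rank $2$.

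The hard part is precisely the proportionality $\beta=\lambda\alpha$, and this is where the residue technology enters. Since $Rank(\zeta)=2<g$, Proposition \ref{w} expresses $\alpha(Q)$ and $\beta(Q)$ as residues, at $p$ respectively at $p$ and $q$, of meromorphic forms built from a presentation $Q=\sum a_{ij}\omega_i\omega_j$ and from the local data of $\zeta$. By the previous step it suffices to check the proportionality on a set of generators of $I_2=I_2^+$, which can be taken to be the lifts $\sum a_{ij}\omega_i\omega_j$ (with $\omega_i\in H^0(K_C)^-$) of the quadrics through the elliptic normal curve $E\hookrightarrow\bP(H^0(E,L)^\vee)$, together with one further quadric $Q_0=\omega_0^2-\rho$, $\rho\in Sym^2H^0(K_C)^-$, chosen so that $\rho$ and $\omega_0^2$ have the same image in $H^0(2K_C)^+\cong H^0(E,L^2)$ (possible since $Sym^2H^0(E,L)\to H^0(E,L^2)$ is onto for $\deg L=g-1\ge4$; these generators span $I_2$ by a dimension count, using $\dim I_2=\tfrac{(g-2)(g-3)}{2}$). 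Now I would substitute the local expansions recorded above into the residue formula of Proposition \ref{w}: for each generator the anti-invariant forms contribute to the residue at $q$ with a fixed sign relative to $p$, and $\omega_0$ contributes nothing new at $q$ because it is locally constant there; the outcome should be that the residue at $q$ equals a fixed scalar times the residue at $p$, uniformly over the two types of generators, hence $\beta=\lambda\alpha$ (one expects $\lambda=\pm1$, so that the asymptotic deformation is $\zeta=\xi_p\mp\xi_q$ up to scalar). Carrying out this bookkeeping carefully — tracking the $\sigma$-action through Proposition \ref{w} and making the off-diagonal residue at the conjugate pair $\{p,q\}$ collapse onto a multiple of the diagonal one, for the quadrics coming from $E$ \emph{and} for the quadric $Q_0$ involving $\omega_0^2$ — is the main obstacle; once it is settled, the argument of the previous paragraph finishes the proof of Theorem \ref{bielliptic}.
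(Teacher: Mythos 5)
Your set-up (eigenspace decomposition, $\sigma$-invariance of $I_2$, $\sigma$-equivariance of $II$, the non-vanishing of $\alpha$ via the rank-one result) matches the paper's, and the reduction to controlling the cross term $\beta(Q)=II(Q)(\xi_p\odot\xi_{\sigma(p)})$ is the right skeleton. But the step you yourself flag as "the hard part" --- the proportionality $\beta=\lambda\alpha$ for an \emph{arbitrary} unramified fibre $\{p,\sigma(p)\}$ --- is a genuine gap, and it is false in general. By \cite[Theorem 2.2]{cfg} the cross term is $II(Q)(\xi_p\odot\xi_{\sigma(p)})=-4\pi i\,Q(p,\sigma(p))\,\hat{\eta}(p,\sigma(p))$, i.e.\ $\beta$ is a multiple of the evaluation functional $Q\mapsto Q(p,\sigma(p))$ on $I_2$, whereas $\alpha(Q)=-2\pi i\,\mu_2(Q)(p)$ involves the second Gaussian map along the diagonal. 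These are independent functionals for generic $p$: concretely, the quadrics in $I_2$ that are cones over quadrics through the elliptic normal curve contain the ruling line through $\phi_K(p)$ and $\phi_K(\sigma(p))$, so $\beta$ vanishes on that subspace, while your remaining generator $Q_0=\omega_0^2-\rho$ satisfies $Q_0(p,\sigma(p))=2\omega_0(p)^2\neq 0$; there is no reason for $\alpha$ to have the same kernel. So no amount of residue bookkeeping will collapse $\beta$ onto a multiple of $\alpha$ at a general fibre; your expectation $\lambda=\pm1$ (hence asymptotic directions $\xi_p\mp\xi_{\sigma(p)}$) is also not what happens.

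The paper's fix is to \emph{choose} the fibre: $\hat{\eta}\in H^0(K_{C\times C}(2\Delta))$ restricted to the graph $\Gamma$ of $\sigma$ has zeros away from the diagonal, and picking $(p,\sigma(p))\in Z(\hat{\eta})\cap\Gamma$ forces $\beta\equiv 0$ on all of $I_2$ at once. Then your own identity $II(Q)(\zeta\odot\zeta)=(a^2+b^2)\alpha(Q)$ holds with $\lambda=0$, and $a^2+b^2=0$ gives the asymptotic directions $\xi_p\pm i\,\xi_{\sigma(p)}$ (note the coefficients are $\pm i$, not $\pm 1$). A minor additional slip: $\omega_0=f^*\eta$ is not nowhere vanishing --- it vanishes exactly at the $2g-2$ ramification points of the bielliptic cover --- though multiplication by $\omega_0$ is still injective on sections, so your conclusion $I_2=I_2^+$ survives.
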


More precisely, the linear combinations of Schiffer variations which are asymptotic in the above Theorem are $\xi_p \pm i \xi_{\sigma(p)}$, where $\sigma$ is the bielliptic involution and $(p, \sigma(p)) \in C \times C$ is in the zero locus of the meromorphic  form $\hat{\eta} \in H^0(K_{C\times C}(2 \Delta))$ which determines the second fundamental form $II$ (see \cite[Theorem 3.7]{cfg}). 

The structure of the paper is as follows. 

In section 2 we describe in Dolbeault cohomology those infinitesimal deformations $\zeta$ whose kernel contains a given nonzero form $\omega \in H^0(K_C)$. We define  a split deformation $\zeta$ as an infinitesimal deformation such that the rank 2 vector bundle of the extension corresponding to $\zeta$ splits as the sum of two line bundles, and we give a Dolbeault cohomology description of it. We also recall the definition of (higher order) Schiffer variations. 

Section 3 contains some technical results that will be useful to make explicit computations on the second fundamental form of the Torelli map.  

In Section 4 we recall the definition of the second fundamental form, we define asymptotic directions and we use the results in Section 3 to give a formula that computes the second fundamental form  $II(Q)(\zeta \odot \zeta)$ on some $\zeta$ with non trivial kernel, in terms of residues of meromorphic forms (see Proposition \ref{w}). 

In Section 5 we consider infinitesimal deformations of rank $d$ less than the Clifford index of the curve, and we analyse the extension corresponding to $\zeta$ and the rank 2 vector bundle $E$ of the extension.  First we show that if either $d < Cliff(C)$, or $d = Cliff(C) < \frac{g-1}{2}$ and $E$  is not globally generated,  then $\zeta$ is a linear combination of Schiffer variations supported on a divisor $D$ of degree $d$ (see Theorem \ref{cliff-rank}). One of the main technical tool is a Theorem of Segre-Nagata and Ghione (see \cite{LazII} p. 84)  on the existence of a subline bundle $A$ of $E$ such that $deg(A) \geq \frac{g-1}{2}$. 
Then we show that if $d < Cliff(C)$, no  linear combinations of Schiffer variations of rank $d$  is  asymptotic (see Theorem \ref{no-schiffer-cliff}). Here we use a result of Green and Lazarsfeld  (\cite[Theorem 1]{GL1}) that allows us to find some quadrics where our technique to compute the second fundamental form works well. Finally we prove Theorem \ref{cliff>d}. 

In Section 6 we define some special split deformations that we call double-split, where we can compute explicitly the second fundamental form, showing that they are not asymptotic.  

In Section 7 we treat the case where the rank of $\zeta$ is equal to the Clifford index of the curve and  we give sufficient conditions ensuring that $\zeta$ is not asymptotic (see Theorem \ref{cliff=d}). 

In Section 8 we determine all asymptotic directions  of rank 1 and we show Theorem \ref{theorem 1} (Theorem \ref{rank1} and Theorem \ref{quintic}). 

In Section 9 we concentrate on deformations of rank 2. We show Theorem  \ref{theorem 2} (Theorem \ref{rank2}, Theorem \ref{sextic1}) and we give the example of asymptotic directions on bielliptic curves proving Theorem   \ref{theorem 3} (Theorem \ref{bielliptic}). 

In section \ref{Maroni} we consider trigonal curves of genus $g=6,7$ and Maroni degree 1 and trigonal curves of genus $g=5$,  showing that there can exist asymptotic directions that are not Schiffer variations in the ramification points of the $g^1_3$. We also  describe these asymptotic directions and we give the equation of the trigonal curves admitting such asymptotic  directions.

\section{Preliminaries on deformations}
\label{section2}

Recall that an infinitesimal  deformation $\zeta \in H^1(T_C)$ corresponds to a class of an extension 

\begin{equation}
\label{extension}
0 \to {\mathcal O}_C \to E \to K_C \to 0
\end{equation}

Taking gobal sections we have: 

$$ 0 \rightarrow H^0(C, {\mathcal O_C} )  \rightarrow H^0(C, E)  \rightarrow H^0(C, K_C) \stackrel{\cup \zeta} \rightarrow H^1(C, {\mathcal O_C}) \rightarrow ...$$

\begin{definition}
We define the rank of $\zeta$ as the rank of the map $\cup \zeta$.\end{definition}

We shall now describe in Dolbeault cohomology those deformations $\zeta$ having a given form $\omega\in H^0(K_C)$ in the kernel of $\cup \zeta$. 

Let $\omega\in H^0(K_C)$, $\omega\neq 0$ be a holomorphic 1-form, $Z=\sum n_ip_i$ its divisor, consider the sequence
 $$0\to T_C\stackrel{\omega}\to \cO_C\to \cO_Z\to 0,$$  and the corresponding exact sequence in cohomology: 
 \begin{equation}
 \label{dolbeault}
  0 \rightarrow H^0({\mathcal O}_C) \stackrel{i}\rightarrow H^0( {\mathcal O}_Z) \stackrel{\delta} \rightarrow H^1(T_C) \stackrel{\omega} \rightarrow H^1( {\mathcal O}_C) \rightarrow 0.
  \end{equation}
  
 By the above exact sequence, the elements in $ \zeta \in H^1(T_C)$ such that $ \omega \in ker(\cup \zeta)$ are exactly those belonging to the image of $\delta$. 
 
 We want now to give an explicit description in Dolbeault cohomology of the image of $\delta$. 
 
 Let $\{U_i, z_i\}$ be open pairwise disjoint coordinate neighbourhoods centred on $p_i $ and $p_i\in D_i\subset \Delta_i \subset U_i$ and $D_i$ and $\Delta_i$ are two closed disks
 where $D_i$ is in the interior of  $\Delta_i$.

  
Let $s\in H^0(\cO_Z)$ be a holomorphic section and assume that  $f_i(z_i) = \sum_{j=0}^{n_i-1} \beta_{j,i} z_i^{j}$ is its polynomial expression in $U_i$. Let $\tilde{\rho} $ be  a ${\mathcal C}^{\infty}$ function on $C$ which is equal to $1$ on $\cup_i D_i$ and equal to $0$ on $C\setminus \cup_i\Delta_i.$

Denote by  $\rho$ the ${\mathcal C}^{\infty}$ function on $C$ given by  $\sum_i \tilde{\rho}  f_i$.  
Let us introduce the following notation: writing  in a local coordinate $z$,  $\omega := g(z) dz$, we set $\frac{1}{\omega} = \frac{1}{g(z)} \frac{\partial}{\partial z}$. Hence $\frac{1}{\omega}$  defines a meromorphic section of $T_C$. 

Then,  in Dolbeault cohomology  we have $\delta(s) =[\debar(\frac \rho\omega ) ]$.  Setting $\zeta:=  [\debar(\frac \rho\omega ) ]$, clearly  $\omega \cup \zeta=0$. Conversely, by the exact sequence \eqref{dolbeault}, any $\zeta$ such that $\omega \cup \zeta=0$ is in the image of $\delta$, hence it has a Dolbeault representative as above.

 \begin{definition} 
 \label{split}
 We say that a nonzero  infinitesimal  deformation $\zeta \in H^1(T_C)$ is split if the vector bundle $E$ in \eqref{extension} splits as a direct sum of line bundles $E = (K_C \otimes L^{\vee}) \oplus  L$. 
  \end{definition}



We will now give a description in Dolbeault cohomology of split deformations. 

Take a form $\omega \in H^0(K_C)$ and  choose a decomposition of its zero divisor $Z = D +F$, where the supports of $D$ and $F$ are disjoint. 
 Set $[\sigma_D]\in H^0(\cO_Z)$,  where $\sigma_D\equiv 1$ on $\cup_{p_i \in Supp(D)} \Delta_i$ and  $\sigma_D \equiv 0$ on $\cup_{p_i \in Supp(F)} \Delta_i$. 


With the above notation set $\rho_D:=\tilde{\rho}  \sigma_D$. Setting $\Theta=\debar (\frac {\rho_D}{\omega}) $, we have in the exact sequence \eqref{dolbeault}, in Dolbeault cohomology $\delta(\sigma_D)=  \zeta =[\debar(\frac {\rho_D} {\omega}) ] \in H^1(C, T_C).$ One has $\Theta \omega = \debar \rho_D$, which is $\debar$-exact, hence $\omega \cup \zeta=0$ in $H^1(C, {\mathcal O}_C)$.

 \begin{proposition}
 \label{prop_split}
 An infinitesimal  deformation $\zeta \in H^1(T_C)$ is split if and only if $ \zeta = \delta(\sigma_D) =  [\debar(\frac {\rho_D} {\omega}) ] $ for some form $\omega = s \tau$, with zero divisor $Z = D + F$, where $D = div(s)$, $F = div(\tau)$, $L = {\mathcal O}_C(D)$ and $D$ and $F$ have disjoint supports. 
 In particular, if $D$ has degree $d$ and $h^0({\mathcal O}_C(D)) = r+1$, then $rank(\zeta) = d -2r$. 
 \end{proposition}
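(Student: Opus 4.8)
The plan is to pin down, for any form $\omega = s\tau$ with zero divisor $D+F$ and $\operatorname{Supp}(D)\cap\operatorname{Supp}(F)=\emptyset$, the bundle $E$ in the extension $(\ref{extension})$ attached to $\delta(\sigma_D)$, and then to read off both implications from this description. Put $L=\mathcal O_C(D)$, so that multiplication by $\omega$ identifies $\mathcal O_C(F)$ with $K_C\otimes L^\vee$ and $\mathcal O_C(D+F)$ with $K_C$. The \emph{model extension} I want is
\[
0\to \mathcal O_C \xrightarrow{\,(s,\tau)\,} L\oplus (K_C\otimes L^\vee)\xrightarrow{\,(\tau,-s)\,} K_C\to 0 .
\]
Exactness is immediate off $\operatorname{Supp}(D)\cup\operatorname{Supp}(F)$, and at those points it follows from the disjointness of $D$ and $F$. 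To see that its class is $\delta(\sigma_D)$ I compute the class in Dolbeault cohomology using the $C^\infty$ splitting $\kappa$ of the quotient map with $\kappa(w)=\bigl(\rho_D\,w/\tau,\ -(1-\rho_D)\,w/s\bigr)$: this is a genuine smooth bundle map $K_C\to L\oplus(K_C\otimes L^\vee)$ precisely because $\rho_D$ vanishes near $\operatorname{Supp}(F)$, killing the poles of $w/\tau$, and equals $1$ near $\operatorname{Supp}(D)$, killing the poles of $w/s$. Since $E$ is a genuine direct sum of holomorphic line bundles, $\bar\partial_E$ is diagonal, and one finds $\bar\partial_E\kappa(w)=\bigl((\bar\partial\rho_D)\,w/\tau,\ (\bar\partial\rho_D)\,w/s\bigr)=(s,\tau)\cdot\bigl((\bar\partial\rho_D)\,w/\omega\bigr)$, which lies in the subsheaf $\mathcal O_C$; hence the class is $[(\bar\partial\rho_D)/\omega]=[\bar\partial(\rho_D/\omega)]=\delta(\sigma_D)$, matching the formula recalled before Definition \ref{split} (and re-proving $\omega\cup\zeta=0$). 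Note that multiplying $\tau$ by $c\in\mathbb C^*$ multiplies $\omega$ by $c$ and the class by $c^{-1}$, so across scalings the model realises the whole line $\mathbb C\,\delta(\sigma_D)$.

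Given this, the equivalence is short. If $\zeta=\delta(\sigma_D)$ is as in the statement, the model shows the bundle of $(\ref{extension})$ is $L\oplus(K_C\otimes L^\vee)$, so $\zeta$ is split. Conversely, let $\zeta\neq 0$ be split, $E=(K_C\otimes L^\vee)\oplus L$. The inclusion $\mathcal O_C\hookrightarrow E$ is a pair $(s,\tau)$ with $s\in H^0(L)$, $\tau\in H^0(K_C\otimes L^\vee)$; neither is zero, since otherwise $\mathcal O_C$ would be a direct summand and $\zeta=0$. As the cokernel $K_C$ is a line bundle, $(s,\tau)$ is nowhere vanishing, i.e. $D:=\operatorname{div}(s)$ and $F:=\operatorname{div}(\tau)$ are disjoint; use $s,\tau$ to identify $L\cong\mathcal O_C(D)$, $K_C\otimes L^\vee\cong\mathcal O_C(F)$, and set $\omega:=s\tau\in H^0(K_C)$, of divisor $D+F$. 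The quotient map $\mathcal O_C(D)\oplus\mathcal O_C(F)\to K_C$ is a pair $(\mu,\nu)$ with $\mu\in H^0(\mathcal O_C(F))$, $\nu\in H^0(\mathcal O_C(D))$ and $\mu s+\nu\tau=0$; comparing divisors and using disjointness of $D,F$ forces $\mu\in\mathbb C\tau$, whence $(\mu,\nu)=c(\tau,-s)$ with $c\neq 0$. Thus the extension is the model with its quotient scaled by $c$, so $\zeta$ equals $\delta(\sigma_D)$ computed with the rescaled form $c\omega=(cs)\tau$, which has the required shape with $L=\mathcal O_C(\operatorname{div}(cs))$.

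For the last assertion, take global sections in the model: $\ker(\cup\zeta)$ is the image of $H^0(\mathcal O_C(D))\oplus H^0(K_C(-D))\to H^0(K_C)$, namely $\tau\,H^0(\mathcal O_C(D))+s\,H^0(K_C(-D))$, and disjointness of $D,F$ shows these summands meet exactly in $\mathbb C\,\omega$. Hence $\dim\ker(\cup\zeta)=h^0(\mathcal O_C(D))+h^0(K_C(-D))-1$, and by Serre duality and Riemann--Roch $h^0(K_C(-D))=h^1(\mathcal O_C(D))=h^0(\mathcal O_C(D))-d-1+g=r+g-d$, so $\dim\ker(\cup\zeta)=g-d+2r$ and $\operatorname{rank}(\zeta)=d-2r$.

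The only step that needs care is the computation of the model's extension class: keeping straight which twist of which line bundle appears, the signs in the two structure maps, and verifying that $\rho_D\,w/\tau$, $(1-\rho_D)\,w/s$ and $(\bar\partial\rho_D)\,w/\omega$ really are the smooth objects claimed. Because $\operatorname{Supp}(D)$ and $\operatorname{Supp}(F)$ are disjoint, this is bookkeeping rather than analysis, and everything downstream of it is formal.
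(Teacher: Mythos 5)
Your proof is correct and follows essentially the same route as the paper: both compute the Dolbeault representative of the split extension's class via an explicit smooth splitting (your $\kappa$ is, up to reordering of the summands and sign, the paper's smooth lift $Y=(\tfrac{1-\rho_D}{s},\tfrac{\rho_D}{\tau})$ of $1$ after tensoring by $T_C$), and both obtain the rank formula from $\ker(\cup\zeta)=\tau\, H^0(L)+s\, H^0(K_C\otimes L^{\vee})$ with one-dimensional intersection $\mathbb C\,\omega$. The only difference is that you justify in detail a step the paper asserts outright --- that for a split $E$ the inclusion and quotient maps are forced to be $(s,\tau)$ and $c(\tau,-s)$ with $\operatorname{div}(s)$ and $\operatorname{div}(\tau)$ disjoint, because the cokernel $K_C$ is locally free --- which is a useful addition but not a different method.
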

 
 \begin{proof}
Notice that if $\zeta$ is split, there exist $\tau \in H^0(K_C \otimes L^{\vee})$, $s \in H^0(L)$ non zero sections with disjoint zero loci such that the extension \eqref{extension} is given by: 

\begin{equation}
\label{extsplit}
0 \to {\mathcal O}_C \stackrel{(-\tau, s)} \longrightarrow (K_C\otimes L^{\vee} )\oplus L  \stackrel{s+\tau} \longrightarrow K_C \to 0
\end{equation}
hence $\omega: = s \tau \in ker (\cup \zeta)$. More precisely $ker (\cup \zeta) =  s \cdot H^0(K_C \otimes L^{\vee}) + \tau \cdot H^0(L). $ So, if $h^0(L) = r+1$ and $deg(L) = d$, $dim(ker (\cup \zeta) ) = h^0(L) + h^0(K_C \otimes L^{\vee}) - 1 = 2r + g-d$, hence $\zeta$ has rank equal to $d-2r$.



Setting $D = div(s)$, $F = div(\tau)$, we have $Z = D +F$, where the supports of $D$ and $F$ are disjoint. Setting  $[\sigma_D]\in H^0(\cO_Z)$ as above
then  $\delta(\sigma_D)=  \zeta' =[\debar(\frac {\rho_D} {\omega}) ] \in H^1(C, T_C).$ One has $\debar(\frac {\rho_D} {\omega}) \omega = \debar \rho_D$, which is $\debar$-exact, hence $\zeta'\omega=0$ in $H^1(C, {\mathcal O}_C)$. 
 
 We claim that $\zeta' = \zeta$. In fact, consider the extension \eqref{extsplit} given by $\zeta$ and tensor it by $T_C$. 
 We get 
 
 \begin{equation}
 0 \to T_C \stackrel{(-\tau, s)} \longrightarrow L^{\vee} \oplus (T_C \otimes L  )\stackrel{s+\tau} \longrightarrow {\mathcal O}_C \to 0.
 \end{equation}
 Then $\zeta = \delta(1)$, where $\delta: H^0({\mathcal O}_C) \to H^1(T_C)$ is the coboundary morphism. 
 We have $s \frac{(1-\rho_D)}{s} + \tau \frac{\rho_D}{\tau} = 1$, hence the element $Y:= (\frac{1-\rho_D}{s}, \frac{\rho_D}{\tau}) \in {\mathcal C}^{\infty}(L^{\vee} \oplus (T_C \otimes L  ))$ is mapped to $1$ under the map $(s+\tau)$.  Then $(s + \tau)\debar (Y) = \debar((s+\tau)(Y) )= \debar(1) =0 \in  {\mathcal A}^{0,1}_C$, so there exists an element $X \in {\mathcal A}^{0,1}_C(T_C)$ such that $(-\tau X, sX) = \debar(Y) = (\debar(\frac{1-\rho_D}{s}), \debar(\frac{\rho_D}{\tau}))   \in {\mathcal A}^{0,1}_C(L^{\vee}) \oplus {\mathcal A}^{0,1}_C(T_C \otimes L))$ and  $\zeta = [X] \in H^1(T_C)$. Thus if we take $X = \debar(\frac{\rho_D}{\omega})$, we get $$(- \tau \debar(\frac{\rho_D}{\omega}), s \debar(\frac{\rho_D}{\omega})) = (- \debar(\frac{\rho_D}{s}), \debar(\frac{\rho_D}{\tau})) = (\debar(\frac{1-\rho_D}{s}), \debar(\frac{\rho_D}{\tau})),$$
 since $\debar(\frac{1}{s}) =0$, as $s$ is holomorphic. So we have shown that $\zeta = [\debar (\frac{\rho_D}{\omega})] = \zeta'$. 
 
 Viceversa, assume that in the exact sequence \eqref{dolbeault}  the divisor $Z$ of $\omega$ has a splitting  $Z = D + F$, where $D$ and $F$ have disjoint support. Then $\omega = s \tau$ where $s \in H^0({\mathcal O}_C(D))$, $\tau \in H^0({\mathcal O}_C(F))$, with $div(s) =D$ and $div(\tau) = F$. 
Define $\rho_D$ as above and $\zeta:= [\debar (\frac{\rho_D}{\omega})] $, then clearly $\omega \in ker(\cup \zeta)$. 

We claim that the deformation $\zeta$ is split. In fact taking $L = {\mathcal O}_C(D)$, the extension \eqref{extsplit} has a class given by $\zeta$.

\end{proof}

 Let $C$ be a smooth complex projective curve of genus $g \geq 2$. Let $p$ be  a point in $ C$ and $z$ a local coordinate  centred in $p$. We define the $n^{th}$ Schiffer variation  at $p$ to be the element $\xi_p^n \in H^1(C, T_C) \cong H^{0,1}_{\bar{\partial}}(T_C)$ whose Dolbeault representative is $\frac{ \bar{\partial}\rho_p}{z^n} \frac{\partial}{\partial z}$, where $\rho_p$ is a bump function in $p$ which is equal to one in a small neighborhood $U$ containing $p$, $\xi_p^n = [\frac{ \bar{\partial}\rho_p}{z^n} \frac{\partial}{\partial z}]$. Clearly  $\xi_p^n$ depends on the choice of the local coordinate $z$. Take the exact sequence 
	$$0 \rightarrow T_C  \rightarrow T_C(np)  \rightarrow T_C(np)_{|np} \rightarrow 0, $$
and the induced exact sequence in cohomology: 
$$ 0 \rightarrow H^0(T_C(np)) \rightarrow H^0(T_C(np)_{|np}) \stackrel{\delta^n_p}\rightarrow H^1(T_C). $$
By Riemann Roch, if $n <2g-2$ we have: $$ h^0(T_C(np)) =0,$$ hence we have an inclusion $$ \delta^n_p: H^0(T_C(np)_{|np})  \cong {\mathbb C}^n \hookrightarrow H^1(T_C)$$ and the image of $\delta^n_p$ in $H^1(C,T_C)$ is the $n$-dimensional subspace $\langle \xi_p^1,...,\xi_p^n \rangle$.	

Clearly $H^0(K_C(-np)) \subset Ker (\cup \xi_p^n)$, for $n \leq g$, hence $\xi_p^n$ has rank $\leq n$. For $n=1$ the first Schiffer variations  $\xi_p^1$ are the usual Schiffer variations, that we denote simply by $\xi_p$, and they have rank 1 (see Section 7).

More generally, any linear combination $\zeta =  \sum_{i=1}^k \sum_{j = 1}^{m_i} b_{j,i} \xi^{j}_{p_i} $, with $ \sum_{i=1}^k m_i \leq g$ has rank $\leq \sum_{i=1}^k  m_i$, since $H^0(K_C(-  \sum_{i=1}^k m_i p_i)) \subset ker(\cup \zeta)$. 




\section{Some computations}
 We will now show some technical results that will be useful in the sequel to make explicit computations on the second fundamental form of the Torelli map.  

Consider two holomorphic one forms $\omega_1$ and $\omega_2$.  With the notation introduced in section \ref{section2}, take an infinitesimal deformation $\zeta$ having a form $\omega \neq 0$ in its kernel and denote by $Z = \sum n_i p_i$ its zero divisor . Then $\zeta = [\Theta] \in H^1(T_C)$ with $ \Theta:= \debar(\frac \rho\omega )$, and $\rho = \sum_i \tilde{\rho}  f_i$.  
We have $ \Theta\omega_i= \gamma_i + \debar h_i$, where $\gamma_i$ is harmonic and  $h_i$ is a ${\mathcal C}^\infty$ function on $C$, so  $\zeta \cup \omega_i=[\gamma_i] \in H^1({\mathcal O}_C)$. 
Consider the two meromorphic functions  $g_1=\frac{\omega_1}\omega$ and $g_2=\frac{\omega_2}\omega$. 
Then clearly, by the above construction we have 

$$ \Theta\omega_i=  \gamma_i + \debar h_i = \debar (\rho g_i).$$
 Note  that if $ \rho g_1$ is $C^\infty$, then  $\omega_1 \in ker ( \cup \zeta)$ and $\gamma_1 =0$. Moreover,  up  to a constant,  we have $h_1=  \rho g_1.$

 Set
\begin {equation}\label{eterm}
w(\zeta,\omega_1,\omega_2 ):=2\pi i \sum_{p_i \in Supp(Z)}Res_{p_i} (f_ig_1d(f_ig_2)) .
\end{equation}

In the next section we will show that this expression will be very useful in the computation of the second fundamental form of the Torelli map.  
  \begin{remark}
  \label{h1h2}
 For any ${\mathcal C}^\infty$ functions $h_1, h_2$, we have 
 
 $$\int_C \partial h_1 \wedge \overline{\partial}h_2 = \int_C \partial h_2 \wedge \overline{\partial}h_1.$$ 
 \end{remark}
 \begin{proof}
 By Stokes Theorem we have: 
 $$\int_C \partial h_1 \wedge \overline{\partial}h_2 =\int_C d(h_1 \overline{\partial}h_2) - \int_C h_1 \partial \overline{\partial}h_2 = 
   \int_C h_1 \overline{\partial} \partial h_2=$$
 $$  = \int_C d(h_1  \partial h_2) - \int_C \overline{\partial}h_1 \wedge \partial h_2 = \int_C \partial h_2 \wedge \overline{\partial} h_1. $$
 \end{proof}
 \begin{lemma}  Assume  that $ \rho g_1$ is $C^\infty$, then we have have\label{formula}
 $$\int_C \partial h_1\wedge \debar h_2=\int_C  \partial( \rho g_1)\wedge \debar (\rho g_2)=$$
 $$ =\int_C  \partial(\rho g_2)\wedge\debar (\rho g_1) +w(\zeta,\omega_1,\omega_2 ) =  \int_C \partial h_2 \wedge \overline{\partial}h_1.$$
 
 In particular, if both $ \rho g_1$ and $ \rho g_2$ are $C^\infty$, we have $w(\zeta,\omega_1,\omega_2 )=0$. 
\end{lemma}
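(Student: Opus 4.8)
The plan is to compute the integral $\int_C \partial(\rho g_1)\wedge\debar(\rho g_2)$ by the same Stokes-theorem manoeuvre used in Remark \ref{h1h2}, but keeping track of the fact that $\rho g_2$ (equivalently $g_2 = \omega_2/\omega$) need not be globally $C^\infty$: it has poles along $\mathrm{Supp}(Z)$. Since $\rho g_1$ \emph{is} assumed $C^\infty$ on all of $C$, the product $\rho g_1\cdot\debar(\rho g_2)$ is a globally defined $(0,1)$-form with $L^1$ (in fact $C^\infty$ away from $Z$) coefficients, and the only obstruction to applying Stokes freely is the singular locus. So I would excise small disks $D_i^\varepsilon$ around the points $p_i\in\mathrm{Supp}(Z)$, apply Stokes on $C\setminus\bigcup_i D_i^\varepsilon$, and analyse the boundary terms as $\varepsilon\to 0$.

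Concretely, first write $\partial(\rho g_1)\wedge\debar(\rho g_2) = d\bigl(\rho g_1\,\debar(\rho g_2)\bigr) - \rho g_1\,\debar\partial(\rho g_2)$. On the region where $\rho\equiv 1$ near each $p_i$, $\rho g_2 = g_2$ is meromorphic, so $\partial(\rho g_2)$ is meromorphic and $\debar\partial(\rho g_2)=0$ there; away from $\bigcup_i\Delta_i$ both $\rho g_1$ and $\rho g_2$ vanish. Hence the ``bulk'' term $\int_{C\setminus\bigcup D_i^\varepsilon}\rho g_1\,\debar\partial(\rho g_2)$ is supported on the annular transition regions where $\rho$ varies, which contain no points of $Z$; there the same algebra as in Remark \ref{h1h2} symmetrises it against the companion term coming from $\int\partial(\rho g_2)\wedge\debar(\rho g_1)$. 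The key point is the boundary contribution $-\sum_i\int_{\partial D_i^\varepsilon}\rho g_1\,\debar(\rho g_2) = -\sum_i\int_{\partial D_i^\varepsilon}f_i g_1\,\debar(f_i g_2)$ (using $\rho = f_i$ near $p_i$, with $f_i$ the polynomial truncation). On a small circle one has $\debar(f_i g_2) = d(f_i g_2) - \partial(f_i g_2)$, and integrating the meromorphic piece $f_i g_1\,d(f_i g_2)$ over $\partial D_i^\varepsilon$ picks out exactly $2\pi i\,\mathrm{Res}_{p_i}\bigl(f_i g_1\,d(f_i g_2)\bigr)$ by the residue theorem (orientation of $\partial D_i^\varepsilon$ accounted for), while the remaining $\partial(f_i g_2)$-term together with the $\partial$-part of $f_i g_1$ is genuinely smooth near $p_i$ once we remember that $f_i g_1 = \rho g_1$ is $C^\infty$, so its circle integral tends to $0$. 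Summing over $i$ produces precisely $w(\zeta,\omega_1,\omega_2)$ as defined in \eqref{eterm}.

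This yields the first two displayed equalities: $\int_C\partial h_1\wedge\debar h_2 = \int_C\partial(\rho g_1)\wedge\debar(\rho g_2) = \int_C\partial(\rho g_2)\wedge\debar(\rho g_1) + w(\zeta,\omega_1,\omega_2)$, where the identification of $\int_C\partial h_1\wedge\debar h_2$ with $\int_C\partial(\rho g_1)\wedge\debar(\rho g_2)$ uses the hypothesis that $\rho g_1$ is $C^\infty$, so that (up to a constant) $h_1 = \rho g_1$ and $h_2$ differs from $\rho g_2$ only by the harmonic form $\gamma_2$ plus a smooth function, with the $\partial h_1\wedge(\text{harmonic})$ contribution vanishing because $\partial h_1 = \partial(\rho g_1)$ pairs to zero against the $(0,1)$ harmonic form after integration by parts (here $\debar\partial(\rho g_1)=0$ on the relevant region). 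For the last equality, $\int_C\partial(\rho g_2)\wedge\debar(\rho g_1) = \int_C\partial h_2\wedge\debar h_1$ by the same smoothness-of-$\rho g_1$ argument run symmetrically, or directly from Remark \ref{h1h2} applied to the now-legitimate pair; and the final assertion is immediate, since if \emph{both} $\rho g_1$ and $\rho g_2$ are $C^\infty$ there are no poles, no excised disks are needed, Remark \ref{h1h2} applies verbatim, and hence $w(\zeta,\omega_1,\omega_2)=0$.

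The main obstacle I anticipate is bookkeeping at the boundary circles: making sure that the non-meromorphic ``error'' pieces of $\debar(f_i g_2)$ and the $\partial$-contributions to $f_i g_1$ really do contribute zero in the limit $\varepsilon\to 0$ (one needs that $f_i g_1$ stays bounded, i.e. the $C^\infty$ hypothesis is used in an essential way, to kill a potential $1/\varepsilon$ from a double pole of $g_2$ times the circumference $\varepsilon$), and pinning down signs and orientations so that the residue comes out with the correct $2\pi i$ and not $-2\pi i$. A secondary subtlety is justifying that one may replace $h_1,h_2$ by $\rho g_1,\rho g_2$ inside the integral: this needs the observation that $h_i - \rho g_i$ is smooth and that $\int_C\partial h_1\wedge\debar(\text{smooth or harmonic})$ terms either cancel in the symmetric difference or vanish by $\debar\partial h_1 = 0$ away from $Z$ combined with $h_1=\rho g_1$ being globally smooth — so that, in fact, no residue correction is hidden in that replacement. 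Once these two points are handled carefully the rest is the routine Stokes computation already rehearsed in Remark \ref{h1h2}.
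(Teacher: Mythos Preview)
Your overall strategy---Stokes with excised disks to extract residues---is the same as the paper's, but you mislocate the step at which the residue appears, and as written your argument produces the wrong answer.

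The boundary integral you single out, $-\sum_i\int_{\partial D_i^\varepsilon}\rho g_1\,\debar(\rho g_2)$, is \emph{zero}. On $D_i$ one has $\rho g_2=f_ig_2$, which is meromorphic, hence $\debar(\rho g_2)\equiv 0$ there; the integrand vanishes identically on the circle. Your proposed splitting $\debar(f_ig_2)=d(f_ig_2)-\partial(f_ig_2)$ is vacuous for a meromorphic function: away from the pole $d(f_ig_2)=\partial(f_ig_2)$, so the ``meromorphic piece'' and the ``$\partial$-piece'' are equal and cancel, not one giving a residue and the other tending to $0$. In fact $\rho g_1\,\debar(\rho g_2)$ is globally $C^\infty$ on $C$ (since $\debar(\rho g_2)=\Theta\omega_2$ is $C^\infty$), so $\int_C d(\rho g_1\,\debar(\rho g_2))=0$ outright, with no excision needed; this is exactly how the paper handles that term.

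The residue enters at the \emph{next} integration by parts. After reducing to $\int_C \rho g_1\,\debar\partial(\rho g_2)$, one writes $\rho g_1\,\debar\partial(\rho g_2)=d\bigl(\rho g_1\,\partial(\rho g_2)\bigr)-\debar(\rho g_1)\wedge\partial(\rho g_2)$. Now $\partial(\rho g_2)$ genuinely has poles on $Z$, so the exact term $d(\rho g_1\,\partial(\rho g_2))$ is the one requiring excision: on $\partial D_i^\varepsilon$ it restricts to $f_ig_1\,d(f_ig_2)$, and the limit of its boundary integral is $2\pi i\sum_i\operatorname{Res}_{p_i}(f_ig_1\,d(f_ig_2))=w(\zeta,\omega_1,\omega_2)$. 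The remaining term $-\int_C\debar(\rho g_1)\wedge\partial(\rho g_2)=\int_C\partial(\rho g_2)\wedge\debar(\rho g_1)$ is what you want. Your ``bulk term symmetrises via Remark~\ref{h1h2}'' is not enough: that symmetrisation itself \emph{is} this second integration by parts, and it is precisely where the boundary contribution lives. If you move the excision and residue analysis to this step, your argument becomes correct and coincides with the paper's. (Also note a sign slip in your first displayed identity: it should read $+\rho g_1\,\debar\partial(\rho g_2)$, since $\partial\debar=-\debar\partial$.)
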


\begin{proof}

Since $\int_C \partial h_1\wedge \gamma_2=\int_C d(h_1 \gamma_2)=0$,  and $\Theta \omega_2 = \bar{\partial} (\rho g_2)$, we  get
  $$\int_C \partial h_1\wedge \debar h_2=\int_C  \partial h_1\wedge(\Theta \omega_2-\gamma_2)=\int_C  \partial h_1\wedge\Theta\omega_2 =$$
$$=\int_C  \partial (\rho g_1)\wedge \debar (\rho g_2),$$
hence the first equality is proven.  Moreover, 

$$\int_C  \partial (\rho g_1)\wedge \debar (\rho g_2)= \int_C d(\rho g_1)\wedge\debar(\rho g_2)=\int_Cd(\rho g_1\debar(\rho g_2))-\int_C\rho g_1\partial(\debar (\rho g_2 )).$$

Using Stokes theorem, and recalling that $\debar(\rho g_2) = \Theta\omega_2$ is a ${\mathcal C}^{\infty}$ $(0,1)$-form on $C$, we have 
$$\int_Cd(\rho g_1\debar (\rho g_2))=0. $$

Then we get

$$\int_C \partial h_1\wedge \debar h_2=-\int_C \rho g_1\partial\debar (\rho g_2)=\int_C \rho g_1\debar \partial(\rho g_2)=\int_Cd(\rho g_1  \partial(\rho g_2))- \int_C\debar (\rho g_1)\wedge \partial(\rho g_2)= $$
$$=\int_Cd(\rho g_1  \partial(\rho g_2))+ \int_C \partial(\rho g_2) \wedge  \debar (\rho g_1).$$

Now, taking $D_{i, \epsilon}$   a small disc around $p_i$ where $\rho = f_i$, and by Stokes theorem, we have $$\int_Cd(\rho g_1  \partial(\rho g_2))=\sum \lim_{\epsilon\to 0} \int_{\partial D_{i,\epsilon}}  \rho g_1d( \rho g_2)=
2\pi i \sum_{p_i \in Z}Res_{p_i} (f_ig_1d(f_ig_2))= w(\zeta,\omega_1,\omega_2 ). $$

So we have shown that 
$$\int_C \partial h_1\wedge \debar h_2
  =\int_C  \partial(\rho g_2)\wedge\debar (\rho g_1) +w(\zeta,\omega_1,\omega_2 ),$$
and by   Remark \ref{h1h2} the first term is equal to 
  $ \int_C \partial h_2 \wedge \overline{\partial}h_1.$

 Finally,  if $\rho g_1$ and $\rho g_2$ are both $C^\infty$, clearly $f_ig_1d(f_ig_2)$ has no poles on $Z$, hence the last sentence follows.

 \end{proof}

\begin{remark}
 Notice that if $\zeta= [\debar (\frac{\rho_D}{\omega})] $ is split,  in formula \eqref{eterm} we have $$w(\zeta,\omega_1,\omega_2 )=2\pi i \sum_{p_i \in Supp(D)}Res_{p_i} (g_1d(g_2)) .$$
 \end{remark}

\section{Second fundamental form and Hodge Gaussian map}

 Denote by ${\mathcal M}_g$  the moduli space of smooth complex projective curves of genus $g$ and  let ${\mathcal A}_g$ be the moduli space of principally polarised abelian varieties of dimension $g$. 
 The space ${\mathcal A}_g$ is a quotient of the Siegel space ${\mathcal H}_g = Sp(2g, {\mathbb R})/U(g)$ by the action of the symplectic group $Sp(2g, {\mathbb Z})$. The space ${\mathcal H}_g$ is a Hermitain symmetric domain and it is endowed with a canonical symmetric metric. We consider the induced  orbifold metric (called the Siegel metric) on the quotient ${\mathcal A}_g$. 
 
Denote by 
$$j: {\mathcal M}_g \to {\mathcal A}_g,  \ [C] \mapsto [j(C), \Theta_C]$$
the Torelli map, where $j(C)$ is the Jacobian of $C$ and $\Theta_C$ is the principal polarisation induced by cup product. If $g \geq 3$, it is an orbifold embedding outside the hyperelliptic locus (\cite{oort-st}).  

Consider the complement  ${\mathcal M}_g^0$ of the hyperelliptic locus in ${\mathcal M}_g$ and the cotangent exact sequence of the Torelli map: 
$$ 0 \to N_{{\mathcal M}_g^0/{\mathcal A}_g} ^*\to \Omega^1_{{\mathcal A}_g{|{\mathcal M}_g^0}} \stackrel{q}\to \Omega^1_{{\mathcal M}_g^0} \to 0,$$
where $q = dj^*$ is the dual of the differential of the Torelli map. 
Call  $\nabla$  the Chern connection on $\Omega^1_{{\mathcal A}_g{|{\mathcal M}_g^0}}$ with respect to the Siegel metric and let 
$$II:  N_{{\mathcal M}_g^0/{\mathcal A}_g} ^* \to Sym^2 \Omega^1_{{\mathcal M}_g^0}, \ \ II = (q \otimes Id_{\Omega^1_{{\mathcal M}_g^0}}) \circ \nabla_{|N_{{\mathcal M}_g^0/{\mathcal A}_g} ^*}$$
be the second fundamental form of the above exact sequence. 

From now on we will assume $g \geq 4$, so $N_{{\mathcal M}_g^0/{\mathcal A}_g} ^*$ is non trivial.
  
 If we take a point  $[C] \in {\mathcal M}_g^0$, we have: 
 $$ N_{{\mathcal M}_g^0/{\mathcal A}_g, [C]} ^*= I_2, 
 \ \Omega^1_{{\mathcal A}_g{|{\mathcal M}_g^0}, [C]} = Sym^2H^0(C, K_C), \ \Omega^1_{{\mathcal M}_g^0, [C]} = H^0(C,K_C^{\otimes 2}),$$
 where $I_2:= I_2(K_C)$ is the vector space of quadrics containing the canonical curve and the dual of the differential of the Torelli map $q$ is the multiplication map of global sections. Then, at the point $[C]$, the second fundamental form is a linear map
 $$II: I_2 \to Sym^2H^0(K_C^{\otimes 2}). $$

\begin{definition}
A nonzero  element $\zeta \in H^1(T_C)$ is an asymptotic direction if 
$$II(Q)(\zeta \odot \zeta) = 0$$
 for every $Q \in I_2$. 
\end{definition}
 In  \cite[Theorem 2.1]{cpt} it is proven that $II$ is equal (up to a constant) to the Hodge-Gaussian map $\rho$ of  \cite[Proposition-Definition 1.3]{cpt}. We briefly recall its definition.


 Let $Q= \sum_{i, j}a_{i,j}\omega_i\otimes \omega_j\in I_2$, with $a_{i,j} = a_{j,i} $, $\omega_i \in H^0(K_C)$, 
and $\Theta\in \cA^{0,1}(T_C)$,  $[\Theta]=:\zeta.$

We write 
$\Theta \omega_i=\gamma_i +\debar h_i$ where $\gamma_i$ is a harmonic $(0,1)$-form.
Now, identifying  $Sym^2H^0(K_C^{\otimes 2})$ with the symmetric homomorphisms $H^1(T_C) \to H^0(K_C^{\otimes 2})$, we have  (see \cite{cpt}):
\begin{equation}
\label{rho}
II(\sum_{i, j}a_{ij}\omega_i \otimes \omega_j)(\zeta)=\sum_{i,j}a_{ij}\omega_i\partial h_{j}.
\end{equation}

In \cite[Theorem 3.1]{cpt} (see also \cite[Theorem 2.2]{cfg}) it is proven that if 
 $C$ is a non-hyperelliptic
curve of genus $g \geq 4$ and $p,q$ are two distinct points in $C$,  we have
 \begin{equation}
 \label{xip}
 II(Q)(\xi_p \odot\xi_p)  = -2\pi i  \cdot \mu_2(Q)(p),
 \end{equation}
 where $\mu_2: I_2 \rightarrow H^0(K_C^{\otimes 4}) $ is the second Gaussian map of the canonical bundle 
 (see \cite{cpt}, or \cite{cfg} for more details). \\

\begin{remark}
Since $II$ is injective (see  \cite[Corollary 3.4]{cf-trans}), $II(I_2) \subset Sym^2H^1(T_C)^{\vee}$ is a linear system of quadrics in ${\mathbb P}H^1(T_C) \cong {\mathbb P}^{3g-4} $ of dimension $\frac{(g-2)(g-3)}{2}$.  Hence for every curve $C$ of genus $g \leq 9$, $dim( II(I_2)) < 3g-4$, so the intersections of the quadrics in $II(I_2)$ is non empty, thus there exist asymptotic directions. 
\end{remark}

{\bf Examples of asymptotic directions.}
\begin{enumerate}
\item Using equation \eqref{xip} one can see  that Schiffer variations $\xi_p$ at ramification points 
of a $g^1_3$ on any trigonal curve of genus $g \geq 4$ are asymptotic directions (see Lemma \ref{lem_schiffer}). Moving a branch point in ${\mathbb P}^1$, one can see that there exist algebraic curves in the trigonal locus having these Schiffer variations as tangent directions (see \cite{gt}). 

\item Other examples of asymptotic directions of rank 1  different from Shiffer variations at ramification points of a $g^1_3$ on  trigonal curves of genus 5, or of genus 6,7 with Maroni degree $k=1$  are given in the last sections.  We will  give an explicit  description of these loci of trigonal curves admitting such asymptotic directions. 

\item In Theorem \ref{bielliptic} we prove that on any bielliptic curve of genus at least 5 there exist linear combinations of two Schiffer variations that are asymptotic directions of rank 2. 

\item Other examples of asymptotic directions are given by tangent vectors to special (hence totally geodesic) subvarieties of ${\mathcal A}_g$ generically contained in the Torelli locus  (see \cite{dejong-noot}, \cite{fgp}, \cite{fgs}, \cite{fpp}, \cite{moonen-special}, \cite{moonen-oort}, \cite{rohde}, \cite{shimura}, \cite{spelta}). In all these examples, $g \leq 7$. 

\end{enumerate}

\begin{remark} 
\label{xiao}
We recall that the original Xiao conjecture for non isotrivial fibered surfaces $S\to B$ with general fibre $F$ of genus $g$
says  that the relative irregularity $q_f$ safisfies the inequality $q_f\leq \frac {g+1}{2}$.  In \cite{pirola}, \cite{albano-pirola} counterexamples to this conjecture have been given, and a modified version, 
$q_f \leq \left \lceil{\frac {g+1}{2}}\right \rceil$ is stated in \cite{bnga}.  In two of the four counterexamples, the associated modular map $p:B\to M_g$  gives rise to curves contained in a totally geodesic subvariety (\cite{fpp}, \cite{grumoe}, \cite{spelta}).  Moreover this always happens when
 $q_f=g-1.$  In fact in that case there is a fixed part $A\times B \to B$ of the Jacobian fibration $J\to B$ where $A$ is an abelian variety of dimension $g-1.$ Then the  period variation of  the Jacobian fibration is only given by the elliptic fibration given by the quotient $J/A.$  So the image of the period map $B  \to A_g$  is a totally geodesic curve.
Since the tangent vectors  to a totally geodesic curve are asymptotic directions, 
these fibrations  give rise to  asymptotic directions whose rank is  $\leq g-q_f.$
So the vanishing of the second fundamental form on tangent directions give new infinitesimal constraints that could help to classify fibered surfaces with $q_f > \frac{g+1}{2}$.  
\end{remark}

Now consider a quadric $Q \in I_2$. Notice that we can always assume that $Q$ has the following expression: 
 $$Q = \sum_{i=1}^s\omega_{2i-1} \odot \omega_{2i}.$$ 
 Fix a holomorphic form $\omega\in H^0(K_C),$ $\omega\neq 0,$ with zero locus $Z$. With the notation of section \ref{section2}, we set  $\zeta =[\debar(\frac {\rho}{ \omega})] $. So we have 
$\zeta  \omega=0$ and $\Theta \omega_j= \gamma_j+\debar h_j$, $\gamma_j$ harmonic.  Let $g_i$ be the meromorphic function given by $\frac{\omega_i}{\omega}$. 
We would like to compute $II(Q)(\zeta \odot \zeta).$  
\begin{proposition}
\label{w}
Assume
$\rho g_{2i-1}$ is $ C^{\infty},$ $\forall i=1,...,s$.  Using the notation of (\ref{eterm}) we have: \label{compiti}
$$II(Q) (\zeta \odot \zeta)= -\sum_{i=1}^s w(\zeta, \omega_{2i-1},\omega_{2i}).$$
\end{proposition}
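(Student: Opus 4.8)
The plan is to compute $II(Q)(\zeta \odot \zeta)$ directly from formula \eqref{rho}, namely $II(Q)(\zeta) = \sum_{i,j} a_{ij}\,\omega_i \partial h_j$, and then to contract with $\zeta$ once more. Writing $Q = \sum_{i=1}^s \omega_{2i-1}\odot\omega_{2i}$, formula \eqref{rho} applied to $\zeta$ gives the homomorphism $H^1(T_C)\to H^0(K_C^{\otimes 2})$ represented by $\frac12\sum_{i=1}^s(\omega_{2i-1}\partial h_{2i} + \omega_{2i}\partial h_{2i-1})$, where $\Theta\omega_j = \gamma_j + \debar h_j$ with $\gamma_j$ harmonic. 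Evaluating this symmetric homomorphism on $\zeta$ again produces (up to the factor coming from the identification with $Sym^2 H^1(T_C)^\vee$) an expression of the shape $\sum_{i=1}^s \int_C \partial h_{2i-1}\wedge \debar h_{2i}$, after pairing $\omega_j \partial h_k$ against $\Theta = $ the Dolbeault representative of $\zeta$ and using $\Theta\omega_j = \gamma_j + \debar h_j$ together with the fact that harmonic forms are orthogonal to $\debar$-exact ones. So the first step is to carry out this bookkeeping carefully and reduce $II(Q)(\zeta\odot\zeta)$ to $-\sum_{i=1}^s \int_C \partial h_{2i-1}\wedge\debar h_{2i}$ (the sign and any $2\pi i$ normalization to be tracked).

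Once that reduction is in place, the key input is Lemma \ref{formula}. Here is where the hypothesis "$\rho g_{2i-1}$ is $C^\infty$ for all $i$" enters: by the discussion just before Remark \ref{h1h2}, when $\rho g_{2i-1}$ is $C^\infty$ we have $\omega_{2i-1}\in \ker(\cup\zeta)$, so $\gamma_{2i-1}=0$ and, up to a constant, $h_{2i-1} = \rho g_{2i-1}$; moreover $\debar(\rho g_{2i}) = \Theta\omega_{2i}$ is globally $C^\infty$. Thus the pair $(\omega_{2i-1},\omega_{2i})$ satisfies exactly the hypotheses of Lemma \ref{formula}, which tells us
$$\int_C \partial h_{2i-1}\wedge\debar h_{2i} = \int_C \partial(\rho g_{2i})\wedge\debar(\rho g_{2i-1}) + w(\zeta,\omega_{2i-1},\omega_{2i}).$$
Applying Lemma \ref{formula} a second time with the roles of $2i-1$ and $2i$ swapped — legitimate because $\rho g_{2i-1}$ being $C^\infty$ also makes $(\omega_{2i},\omega_{2i-1})$ eligible, with $h_{2i-1}=\rho g_{2i-1}$ — or more directly using the symmetry statement $\int_C\partial h_{2i-1}\wedge\debar h_{2i} = \int_C \partial h_{2i}\wedge\debar h_{2i-1}$ from Remark \ref{h1h2} combined with the identity above, one isolates $\int_C \partial h_{2i-1}\wedge\debar h_{2i}$ in terms of $w(\zeta,\omega_{2i-1},\omega_{2i})$ alone. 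Summing over $i$ then yields $II(Q)(\zeta\odot\zeta) = -\sum_{i=1}^s w(\zeta,\omega_{2i-1},\omega_{2i})$.

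The main obstacle I anticipate is not conceptual but clerical: pinning down the precise constants and signs in the chain from \eqref{rho} to the integral $\int_C \partial h_{2i-1}\wedge\debar h_{2i}$, in particular the factor relating the symmetric homomorphism picture to the quadratic form $\zeta\odot\zeta$, and making sure the $2\pi i$ in the definition \eqref{eterm} of $w$ matches the residue that appears via Stokes in Lemma \ref{formula}. A secondary subtlety is justifying that each $h_j$ can indeed be taken with $h_{2i-1} = \rho g_{2i-1}$ exactly (not merely up to a global constant that could a priori affect $\partial h_{2i-1}$) — but this is harmless since $\partial$ kills constants, so the computation of $\partial h_{2i-1}$ is unaffected. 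I would also remark that the hypothesis is used only on the odd-indexed forms $\omega_{2i-1}$: no regularity is needed on $\rho g_{2i}$ beyond what automatically holds ($\debar(\rho g_{2i})=\Theta\omega_{2i}$ is smooth because $\Theta$ is smooth and $\omega_{2i}$ holomorphic), which is exactly the asymmetry exploited in Lemma \ref{formula}.
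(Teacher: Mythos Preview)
Your reduction to the sum $-2\sum_i \int_C \partial h_{2i-1}\wedge\debar h_{2i}$ via \eqref{rho} and Remark \ref{h1h2} is exactly right, and applying Lemma \ref{formula} once to each summand is also correct. The gap is in the next step. You claim you can ``isolate $\int_C \partial h_{2i-1}\wedge\debar h_{2i}$ in terms of $w(\zeta,\omega_{2i-1},\omega_{2i})$ alone'' for each $i$, either by swapping roles in Lemma \ref{formula} or by combining with Remark \ref{h1h2}. Neither works: swapping $(2i-1,2i)$ in Lemma \ref{formula} would require $\rho g_{2i}$ to be $C^\infty$, which is \emph{not} assumed; and Remark \ref{h1h2} only tells you $\int_C\partial h_{2i-1}\wedge\debar h_{2i}=\int_C\partial h_{2i}\wedge\debar h_{2i-1}$, whereas Lemma \ref{formula} leaves you with $\int_C\partial(\rho g_{2i})\wedge\debar(\rho g_{2i-1})$, which is not the same thing (you cannot identify $\partial h_{2i}$ with $\partial(\rho g_{2i})$ when $\rho g_{2i}$ is singular).

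What actually makes the argument close is a fact you never invoke: the hypothesis $Q\in I_2$. After one application of Lemma \ref{formula} and the first equality therein, the paper is left with
\[
-\sum_{i=1}^s w(\zeta,\omega_{2i-1},\omega_{2i})\;-\;\int_C\Omega,\qquad \Omega:=\sum_{i=1}^s\bigl(\partial(\rho g_{2i})\wedge\debar(\rho g_{2i-1})+\partial(\rho g_{2i-1})\wedge\debar(\rho g_{2i})\bigr).
\]
The individual summands in $\Omega$ have no reason to vanish, but the \emph{total} $\Omega$ is identically zero: using $\debar(\rho g_j)=g_j\debar\rho$ one expands $\Omega$ and finds that every term is a multiple of either $\sum_i g_{2i-1}g_{2i}$ or its $\partial$-derivative, both of which vanish because $Q=\sum_i\omega_{2i-1}\odot\omega_{2i}\in I_2$. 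So the quadric condition is not incidental bookkeeping---it is the mechanism that kills the residual integrals, and your sketch needs to incorporate it.
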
\begin{proof}
From the Hodge Gaussian computation \eqref{rho}, we get 
$$II(Q)(\zeta)=\sum_{i=1}^s (\omega_{2i-1}\partial h_{2i}+\omega_{2i}\partial h_{2i-1}).$$ 
We have 
 $$II(Q)(\zeta \odot \zeta)= \zeta(II(Q)(\zeta))=  \debar(\frac {\rho}{ \omega})(\sum_{i=1}^s (\omega_{2i-1}\partial h_{2i}+\omega_{2i}\partial h_{2i-1}))=$$
$$ = \sum_{i=1}^s\int_C((\gamma_{2i-1} + \debar h_{2i-1})\wedge\partial h_{2i}+ (\gamma_{2i} + \debar h_{2i})\wedge\partial h_{2i-1})=$$
$$\sum_{i=1}^s\int_C(\debar h_{2i-1}\wedge\partial h_{2i}+  \debar h_{2i}\wedge\partial h_{2i-1}) = - \sum_{i=1}^s\int_C(\partial h_{2i} \wedge \debar h_{2i-1}+ \partial h_{2i-1} \wedge \debar h_{2i}),$$
since $\gamma_k \wedge \partial h_l$ is exact for any $k, l$.

By Remark \ref{h1h2} we have

$$II(Q)(\zeta \odot \zeta)= - \sum_{i=1}^s\int_C(\partial h_{2i} \wedge \debar h_{2i-1}+ \partial h_{2i-1} \wedge \debar h_{2i})= -2 \sum_{i=1}^s\int_C(\partial h_{2i-1} \wedge \debar h_{2i}).$$

Using Lemma \ref{formula} we have 
$$\int_C(\partial h_{2i-1} \wedge \debar h_{2i})=  \int_C\partial(\rho g_{2i-1})\wedge\debar (\rho g_{2i})= w(\zeta,\omega_{2i-1},\omega_{2i}) +\int_C \partial(\rho g_{2i})\wedge\debar (\rho g_{2i-1}).$$

So we obtain 
$$II(Q)(\zeta \odot \zeta)=
-\sum_{i=1}^s [(w(\zeta,\omega_{2i-1},\omega_{2i}) +\int_C \partial(\rho g_{2i})\wedge\debar (\rho g_{2i-1}))+ \int_C\partial(\rho g_{2i-1})\wedge\debar (\rho g_{2i})]. $$

Now the result follows since
$$\Omega:=\sum (\partial(\rho g_{2i})\wedge\debar (\rho g_{2i-1})+\partial(\rho g_{2i-1})\wedge\debar (\rho g_{2i})) =0.$$
In fact we have $\debar (\rho g_j)=g_j\debar\rho$  so that
$$\Omega= \sum (\partial(\rho g_{2i})g_{2i-1} + \partial(\rho g_{2i-1})g_{2i}) \wedge \debar \rho= \sum  (g_{2i}g_{2i-1} + g_{2i-1}g_{2i})\partial  \rho  \wedge \debar \rho+$$
$$+\sum  ((\partial g_{2i}) g_{2i-1} + (\partial g_{2i-1})g_{2i}) \wedge  \rho  \debar \rho=0,$$
since $Q \in I_2$, hence   $\sum (g_{2i-1} g_{2i} + g_{2i-1}g_{2i}) = 0,$ and hence also its derivative $\sum  ((\partial g_{2i}) g_{2i-1} + (\partial g_{2i-1})g_{2i}) =0$.

\end{proof}

\section{Deformations of rank $d < Cliff(C)$}

Recall that if $C$  is a smooth projective curve of genus $g$ and $L$ is a line bundle on $C$ 
then  the Clifford index of $L$ is 
$$Cliff(L) = deg(L) -2h^0(L) + 2, $$
and the Clifford index of $C$ is
$$Cliff(C) = min_{ L \in Pic(C)}\{deg(L) - 2h^0(L) + 2 :  \ h^0(L) \geq 2, h^1(L) \geq 2 \}.$$
We say that a line bundle $L$ contributes to the Clifford index if $h^0(L) \geq 2, h^1( L) \geq 2$. 

One always has $Cliff(C) \geq 0$, and $Cliff(C) = 0$ if and only if $C$ is hyperelliptic; $Cliff(C) = 1$ if and only if $C$ is trigonal or isomorphic to a plane quintic, and $Cliff(C) = 2$ if and only if $C$ is tetragonal,
or  isomorphic to a plane sextic (see \cite{martens}). We have the following relation between the Clifford index and the gonality $gon(C)$ of a curve $C$ (\cite{coppens-martens}): 
$$Cliff(C) + 2 \leq gon(C) \leq  Cliff(C) + 3.$$

Consider an infinitesimal deformation $\zeta \in H^1(T_C)$ and a corresponding extension 

\begin{equation}
\label{extensionD}
0 \to {\mathcal O}_C \to E \to K_C \to 0.
\end{equation}

\begin{theorem}
\label{cliff-rank}
Let $C$ be a smooth algebraic curve of genus $g \geq 4$ and $\zeta \in H^1(T_C)$ a deformation of rank $d>0$.
Suppose one of the following assumptions is  satisfied: 
\begin{enumerate}
\item $d < Cliff(C)$, 
\item $d = Cliff(C) < \frac{g-1}{2}$ and $E$  not globally generated. 
\end{enumerate}

Then $\zeta$ is a linear combination of Schiffer variations supported on an effective divisor $D$ of degree $d$. 

\end{theorem}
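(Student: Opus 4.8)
The plan is to pass from $\zeta$ to the rank $2$ bundle $E$ of the extension (\ref{extensionD}), produce the splitting predicted by Proposition \ref{prop_split}, and then recognise the resulting split deformation as a combination of (higher) Schiffer variations.

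\emph{Step 1 (cohomology of $E$).} Taking cohomology in (\ref{extensionD}) gives $h^0(E)=1+\dim\ker(\cup\zeta)=g+1-d$. Since $\deg E=2g-2$ we have $\chi(E)=0$, and from $\det E=K_C$ we get $E^\vee\cong E\otimes K_C^{-1}$, hence $h^1(E)=h^0(K_C\otimes E^\vee)=h^0(E)=g+1-d$. In particular $h^0(E)\ge 2$, since under either hypothesis $d\le g-1$.

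\emph{Step 2 (a large sub-line bundle).} By the Segre--Nagata--Ghione theorem (\cite{LazII}, p.~84), $E$ contains a sub-line bundle $A$ with $2\deg A\ge\deg E-g=g-2$; as $g\ge4$ this forces $\deg A>0$. Composing $A\hookrightarrow E$ with $E\to K_C$ yields a morphism of line bundles which cannot vanish (else $A\subseteq\mathcal O_C$ and $\deg A\le0$), hence it is injective, so $A=K_C(-D)$ for an effective divisor $D$. Put $L:=\mathcal O_C(D)$; taking $A$ saturated we get $E/A\cong\det E\otimes A^\vee\cong L$, i.e. a sub-extension $0\to A\to E\to L\to 0$.

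\emph{Step 3 (pin down $D$ and split $E$) --- the crux.} Here the Clifford index enters. Using $h^0(E)=g+1-d$ together with Clifford's theorem one shows that $\deg D=d$, that $h^0(L)=1$, and that the sequence $0\to A\to E\to L\to 0$ splits, so that $E\cong(K_C\otimes L^\vee)\oplus L$. The mechanism is that if $\deg D>d$, or $h^0(L)>1$, or the extension failed to split (which would force enough global sections of $E$ to lie in $A$), then $L$ --- or a line bundle obtained from $L$ by a small twist --- would contribute to the Clifford index while having Clifford index $\le d$; under hypothesis (1) this contradicts $d<Cliff(C)$, and under hypothesis (2), where $d=Cliff(C)$, the hypothesis that $E$ is not globally generated is exactly what excludes the remaining boundary possibility.

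\emph{Step 4 (conclude via Proposition \ref{prop_split}).} Once $E\cong(K_C\otimes L^\vee)\oplus L$ with $L=\mathcal O_C(D)$, $\deg D=d$, $h^0(L)=1$, Proposition \ref{prop_split} gives $\zeta=[\debar(\rho_D/\omega)]$ with $\omega=s\tau$, $D=div(s)$, $\tau\in H^0(K_C\otimes L^\vee)$, and $rank(\zeta)=\deg D-2(h^0(L)-1)=d$. Writing $D=\sum_i m_ip_i$ with $\sum_im_i=d$ and a local coordinate $z_i$ at $p_i$, we have $\tfrac1\omega=z_i^{-m_i}u_i(z_i)^{-1}\tfrac{\partial}{\partial z_i}$ with $u_i$ a unit; expanding $u_i^{-1}=\sum_{k\ge0}c_{k,i}z_i^k$ and using $\rho_D\equiv1$ near $p_i$, the terms with $k\ge m_i$ contribute $\debar$-exact (hence trivial) classes, while those with $0\le k<m_i$ give $\sum_{j=1}^{m_i}c_{m_i-j,i}\,\xi_{p_i}^j$. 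Summing over $i$ yields $\zeta=\sum_i\sum_{j=1}^{m_i}c_{m_i-j,i}\,\xi_{p_i}^j$, a linear combination of (higher) Schiffer variations supported on $D$, with nonzero top coefficient $c_{0,i}=u_i(0)^{-1}$. The main obstacle is Step 3: the algebro-geometric argument that, from $h^0(E)=g+1-d$ and the Segre--Nagata--Ghione sub-line bundle alone, forces $\deg D=d$, $h^0(\mathcal O_C(D))=1$ and the splitting of $E$, and that genuinely uses both hypotheses, the global-generation condition in (2) compensating for the loss in the Clifford-index inequality at the boundary $d=Cliff(C)$.
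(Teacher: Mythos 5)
Your overall strategy (pass to the extension bundle $E$, invoke Segre--Nagata--Ghione, and play the resulting sub-line bundle off against the Clifford index) is exactly the paper's, and Steps 1, 2 and 4 are fine. But Step 3 is the entire mathematical content of the theorem, and you have not proved it: you state that ``one shows'' $\deg D=d$, $h^0(L)=1$ and the splitting, describe ``the mechanism'' in a sentence, and then yourself flag this step as the main obstacle. The actual argument is short but not automatic. Writing $A$ for the saturated sub-line bundle with $\deg A\geq \frac{g-1}{2}$, one first shows $h^0(A)\geq 2$ (if $h^0(A)\leq 1$ then $h^0(K_C\otimes A^{\vee})\geq h^0(E)-1=g-d$, and Riemann--Roch forces $\deg A\leq d<\frac{g-1}{2}$, a contradiction). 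Then, if also $h^0(K_C\otimes A^{\vee})\geq 2$, the line bundle $A$ contributes to the Clifford index, and combining Riemann--Roch with $h^0(A)+h^0(K_C\otimes A^{\vee})\geq h^0(E)=g-d+1$ gives $2h^0(A)\geq \deg A-d+2$, i.e.\ $Cliff(A)\leq d$. Under hypothesis (1) this contradicts $d<Cliff(C)$; under hypothesis (2) it forces equality everywhere, so $H^0(E)\to H^0(K_C\otimes A^{\vee})$ is surjective and both $A$ and $K_C\otimes A^{\vee}$ are base point free (they compute the Clifford index), whence $E$ is globally generated, contradicting the hypothesis. This dichotomy --- not a vague ``small twist of $L$'' --- is how the two hypotheses are actually consumed, and it is the step you must supply. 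One then gets $h^0(K_C\otimes A^{\vee})=1$, so $K_C\otimes A^{\vee}=\mathcal O_C(D)$ with $D$ effective, and $\deg D=d$ follows because $H^0(A)=H^0(K_C(-D))$ injects into $\ker(\cup\zeta)$, which has dimension $g-d$.

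A second, more structural, problem is that your Step 3 aims at the wrong target. You want to prove $E\cong(K_C\otimes L^{\vee})\oplus L$ and then route the conclusion through Proposition \ref{prop_split}; but nothing in the hypotheses forces the extension $0\to K_C(-D)\to E\to\mathcal O_C(D)\to 0$ to split (its class lives in $H^1(K_C(-2D))\cong H^0(\mathcal O_C(2D))^{\vee}$, which has no reason to vanish), and the paper never claims it does. What the paper proves is weaker and suffices: the \emph{pullback} $E'$ of $E$ along $K_C(-D)\hookrightarrow K_C$ fits in a sub-extension $0\to\mathcal O_C\to E'\to K_C(-D)\to 0$ which splits automatically, because the saturated inclusion $A=K_C(-D)\hookrightarrow E$ factors through $E'$ and splits the projection. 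Hence the image of $\zeta$ in $H^1(T_C(D))$ vanishes, so $\zeta$ lies in the image of $H^0(T_C(D)_{|D})\to H^1(T_C)$, which is by definition the span of the (higher) Schiffer variations supported on $D$. This bypasses both the splitting of $E$ and your Dolbeault expansion in Step 4. As written, your proposal has a genuine gap at its central step and, even if that step were filled as you describe, it would be proving a stronger splitting statement than is available.
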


\begin{proof}

Taking gobal sections in the extension \eqref{extensionD} corresponding to the rank $d$ deformation $\zeta$, we have: 

$$ 0 \rightarrow H^0(C, {\mathcal O_C} )  \rightarrow H^0(C, E)  \rightarrow H^0(C, K_C) \stackrel{\cup \zeta} \rightarrow H^1(C, {\mathcal O_C}) \rightarrow ...$$

Since $\zeta$ has rank $d$, we get $h^0(E) = g-d +1$. By a Theorem of Segre-Nagata  and Ghione (see \cite{LazII} p. 84) there exists a subline bundle $A$ of $E$ such that $deg(A) \geq \frac{g-1}{2}$. So, up to saturation we have a diagram 

$$
\xymatrix{
& & 0\ar[d] &\\
             &  & A \ar[d]^{\iota} \ar[dr]^f& & \\
    0 \ar[r]& {\mathcal O}_C \ar[r] \ar[dr]^h& E\ar[d]\ar[r]& K_C\ar[r] & 0 \\
     & & K_C \otimes A^{\vee}\ar[d]& \\
     &  & 0& \\
    &&&& }           
$$
Both the maps $f$ and $h$ are nonzero, since $deg(A) >0$. In fact, if $f$ were zero,  then $\iota$ would factor through ${\mathcal O}_C$. So $f$ is nonzero and hence also $h$ is nonzero.

Hence we have  $h^0(K_C \otimes A^{\vee}) \geq 1$. We claim that $h^0(A) \geq 2$. In fact, if $h^0(A) \leq 1$, we would have $h^0(K_C \otimes A^{\vee})  \geq h^0(E) - 1 = g-d$ and by Riemann Roch, we would get $deg(A) \leq d  < \frac{g-1}{2}$, a contradiction. 
So $h^0(A) \geq 2$, and if $h^0(K_C \otimes A^{\vee}) \geq 2$, $A$ contributes to the Clifford index and we have: 
$$h^0(A) - h^0(K_C \otimes A^{\vee}) = deg(A) -g+1,$$
$$h^0(A) + h^0(K_C \otimes A^{\vee}) \geq g-d+1.$$
So summing up we get 
$$2 h^0(A) \geq deg(A) -d +2,$$
hence $Cliff(A) \leq d$. If $d < Cliff(C)$, this is a contradiction. If  $d =Cliff(C)$,  then $Cliff(A) =d$,  $h^0(A) + h^0(K_C \otimes A^{\vee}) =g-d+1,$ 
and we have an exact sequence 
$$0 \to H^0(A)  \to H^0(E) \to H^0(K_C \otimes A^{\vee}) \to 0.$$
Moreover $A$ and $K_C \otimes A^{\vee}$ are both base point free (since they compute the Clifford index). Thus $E$ is globally generated, a contradiction. 

So $h^0(K_C \otimes A^{\vee}) =1$, hence  there exists an effective divisor $D$ such that $K_C \otimes A^{\vee} = {\mathcal O}_C(D)$, and $h^0({\mathcal O}_C(D)) =1$. 
So  we have $h^0(A) = h^0(K_C(-D)) =  g-d$, hence by Riemann Roch, $deg(A) = deg(K_C(-D))  = 2g-2-d$. Thus the image of the map $H^0(E) \to H^0(K_C)$ is $H^0(K_C(-D))$ and we get a commutative diagram of extensions
\begin{equation}\label{hodge1}
\begin{tikzcd}
0\arrow{r} & {\mathcal O} \arrow{r}  \arrow{d}{=}& E'
\arrow{r}  \arrow[hook]{d}&K_C (-D) \arrow{r} \arrow{dl}{\iota} \arrow[hook]{d}{f}&0 \\
0\arrow{r}& {\mathcal O}  \arrow{r}& E
\arrow{r} & K_C\arrow{r} & 0\\
\end{tikzcd}
\end{equation}
where the upper extension splits, since the image of $\iota$ is contained in $E'$ by construction. Hence it corresponds to $0 \in H^1(T_C(D))$ and 
 the element  $\zeta \in H^1(T_C)$ belongs to the kernel of the map $H^1(T_C) \to H^1(T_C(D))$ which is the image of the injective map $H^0(T_C(D)_{|D}) \to H^1(T_C)$, that is  the space of Schiffer variations supported on $D$. 

\end{proof}

\begin{remark}
Notice that Theorem \ref{cliff-rank} is a generalisation of the generic Torelli theorem of Griffiths. In fact, denote by $C_d$ the symmetric product of $C$. Then, under the assumption $Cliff(C) > d$, we characterise the image of the natural map 
$${\mathbb P}T_{C_d}\to {\mathbb P}(H^1(T_C))$$
as the locus of deformations of rank $\leq d$. When $d=1$ it is the bicanonical curve. 
\end{remark}
We will now consider linear combination of Schiffer variations supported on a divisor of degree less than the Clifford index of $C$. First recall the following 
\begin{lemma}
\label{castelnuovo}
Let $C$ be a smooth curve of genus $g$,  take a positive integer $d < Cliff(C)$  and an effective divisor $D$  of degree $d$. Then $K_C(-D)$ is projectively normal, so the multiplication map 
\begin{equation}
\label{mult}
m:Sym^2H^0(C, K_C(-D)) \to H^0(K_C^{\otimes 2}(-2D))
\end{equation}
is surjective. 
\end{lemma}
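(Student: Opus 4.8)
The assertion is that $L:=K_C(-D)$ is projectively normal; the surjectivity of $m$ in \eqref{mult} then follows at once, being the degree-two graded piece of the surjection $\bigoplus_k Sym^kH^0(L)\to\bigoplus_k H^0(L^{\otimes k})$. The plan is to reduce the projective normality to the normal generation criterion of Green and Lazarsfeld, so the first step is to record the relevant invariants of $L$. Since $d$ is a positive integer with $d<Cliff(C)$ we have $Cliff(C)\geq 2$; in particular $C$ is not hyperelliptic (so $K_C$ is very ample), and $d\leq Cliff(C)-1\leq\lfloor\frac{g-1}{2}\rfloor-1$, which also forces $g\geq 5$. I first claim $h^0(\mathcal{O}_C(D))=1$: otherwise $h^0(\mathcal{O}_C(D))\geq 2$, and since $\deg D=d$ is well below $g-1$ we also get $h^1(\mathcal{O}_C(D))=h^0(L)\geq 2$, so $\mathcal{O}_C(D)$ contributes to the Clifford index with $Cliff(\mathcal{O}_C(D))\leq d-2<Cliff(C)$, a contradiction. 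Hence $h^1(L)=h^0(\mathcal{O}_C(D))=1$, and by Riemann--Roch $\deg L=2g-2-d$ and $h^0(L)=g-d$.

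\textbf{Very ampleness of $K_C(-D)$.} Next I would verify that $L=K_C(-D)$ is very ample. Using Riemann--Roch together with $h^1(L)=1$, one computes that for an effective divisor $p+q$ of degree $2$ the equality $h^0(L(-p-q))=h^0(L)-2$ fails exactly when $h^0(\mathcal{O}_C(D+p+q))\geq 2$. But $\deg(D+p+q)=d+2\leq g-1$ (as $d\leq g-3$), so Riemann--Roch gives $h^1(\mathcal{O}_C(D+p+q))=h^0(\mathcal{O}_C(D+p+q))+g-3-d\geq 2$ as soon as $h^0\geq 2$; then $\mathcal{O}_C(D+p+q)$ contributes to the Clifford index with $Cliff(\mathcal{O}_C(D+p+q))\leq(d+2)-2\cdot 2+2=d<Cliff(C)$, again a contradiction. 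Therefore $L$ separates points and tangent vectors, i.e.\ it is very ample.

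\textbf{Applying Green--Lazarsfeld.} Finally I would invoke the normal generation theorem of Green and Lazarsfeld: a very ample line bundle $L$ on a smooth curve $C$ with $\deg L\geq 2g+1-2h^1(L)-Cliff(C)$ is normally generated. Here $h^1(L)=1$, so the right-hand side equals $2g-1-Cliff(C)$, while $\deg L=2g-2-d$; the inequality thus reads $2g-2-d\geq 2g-1-Cliff(C)$, i.e.\ $d\leq Cliff(C)-1$, which is precisely the hypothesis $d<Cliff(C)$. Hence $K_C(-D)$ is projectively normal and $m$ is surjective. The substantive point is this last invocation; the only real obstacle is to confirm that every hypothesis of the Green--Lazarsfeld theorem is met --- very ampleness, the exact value $h^1(L)=1$, and the degree bound --- which is exactly what the Clifford-index arguments in the first two steps secure, and the sharpness of the numerical bound is precisely why the borderline case $d=Cliff(C)$ has to be handled separately.
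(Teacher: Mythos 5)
Your proof is correct and follows exactly the same route as the paper: establish $h^0(\mathcal{O}_C(D))=1$ and the very ampleness of $K_C(-D)$ via Clifford-index contradictions, then invoke the normal generation criterion of Green--Lazarsfeld (\cite[Theorem 1]{GL1}) with $h^1(L)=1$. The paper states these steps without detail, and your write-up simply supplies the Riemann--Roch computations it leaves implicit.
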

\begin{proof}
Notice that  $h^0(D) =1$  since $Cliff(C)>d$. Moreover 
$K_C(-D)$ is very ample by Riemann Roch and by the assumption $Cliff(C)>d$. 
So by  \cite[Theorem 1]{GL1}, $K_C(-D)$ is projectively normal. 
\end{proof}

 We have  the following 
\begin{theorem}
\label{no-schiffer-cliff}
Let $C$ be a smooth curve of genus $g$,  take a positive  integer $d < Cliff(C)$, an effective divisor $D = \sum_{i=1}^k m_i p_i$  of degree $d$. Then any linear combination of all Schiffer variations $\xi^{n_i}_{p_i} $, with $n_i \leq m_i$ is not an asymptotic direction. 

\end{theorem}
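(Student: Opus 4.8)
The plan is to reduce the non-asymptoticity of a linear combination $\zeta = \sum_{i,j} b_{j,i}\,\xi^{j}_{p_i}$ (with $j \le m_i$) supported on $D = \sum m_i p_i$ of degree $d < \mathrm{Cliff}(C)$ to a computation with the residue functional $w(\zeta,\omega_1,\omega_2)$ of \eqref{eterm}, via Proposition \ref{w}. First I would fix a holomorphic form $\omega \in H^0(K_C)$ whose divisor $Z$ contains $D$, say $Z = D + F$ with $D$ and $F$ of disjoint support; this is possible because, by Lemma \ref{castelnuovo} (or directly by $h^0(D)=1$, $K_C(-D)$ very ample), $H^0(K_C(-D)) \ne 0$ and a general such section has reduced zeros away from $D$, so $\zeta$ admits the Dolbeault representative $[\debar(\rho/\omega)]$ attached to this $\omega$ — indeed $\zeta$ lies in the span $\langle \xi^1_{p_i},\dots,\xi^{m_i}_{p_i}\rangle_i$, which is exactly the image of $\delta$ in \eqref{dolbeault} for this $\omega$. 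I would then note that $\zeta$ being asymptotic is equivalent to $II(Q)(\zeta\odot\zeta)=0$ for all $Q$ in a spanning set of $I_2$, so it suffices to exhibit one quadric $Q \in I_2$ on which the pairing is nonzero.

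The key step is the choice of $Q$. Write $Q = \sum_{i=1}^s \omega_{2i-1}\odot\omega_{2i}$. By Proposition \ref{w}, provided each $\rho g_{2i-1}$ is $C^\infty$ (i.e. each $\omega_{2i-1}$ is divisible by $\omega$ along $D$, equivalently $\omega_{2i-1} \in \omega\cdot\mathcal{O} = H^0(K_C(-Z))$ locally near $D$, which in practice means $\omega_{2i-1}$ vanishes on $D$ to the required order), we get
$$
II(Q)(\zeta\odot\zeta) = -\sum_{i=1}^s w(\zeta,\omega_{2i-1},\omega_{2i}) = -2\pi i \sum_{i=1}^s \sum_{p\in\mathrm{Supp}(D)} \mathrm{Res}_p\big(f g_{2i-1}\,d(f g_{2i})\big),
$$
where $f,g_k$ are the local data of Section \ref{section2}. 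So I want quadrics in $I_2$ of the shape $\sum \omega_{\mathrm{odd}}\odot\omega_{\mathrm{even}}$ with the odd factors lying in $H^0(K_C(-2D))$; these exist in abundance because Lemma \ref{castelnuovo} tells us the multiplication map $m\colon \mathrm{Sym}^2 H^0(K_C(-D)) \to H^0(K_C^{\otimes 2}(-2D))$ is surjective, and more to the point the result of Green–Lazarsfeld (\cite[Theorem 1]{GL1}) gives projective normality of $K_C(-D)$, hence the ideal of the embedding by $K_C(-D)$ is generated by quadrics. I would use this to produce, inside $I_2 = I_2(K_C)$, enough quadrics whose restriction to the subspace $H^0(K_C(-D))$ is an arbitrary element of $I_2(K_C(-D))$, and in particular to arrange the residue sum above to be nonzero: concretely, choosing $\omega_{2i}$ adapted to the Schiffer data near $D$ one computes $\mathrm{Res}_{p_i}(fg_{2i-1}\,d(fg_{2i}))$ and sees that these residues span enough of the dual of the space of coefficients $\{b_{j,i}\}$ that they cannot all vanish when $\zeta \ne 0$.

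The main obstacle I anticipate is exactly this last point: showing that the family of residue functionals $Q \mapsto \sum_i w(\zeta,\omega_{2i-1},\omega_{2i})$, as $Q$ ranges over the quadrics in $I_2$ for which Proposition \ref{w} applies, separates the nonzero $\zeta$ from $0$ — i.e. that one genuinely gets a nonvanishing residue rather than an accidental cancellation among the points $p_i$ or among the orders $j$. The clean way to handle this is to work one point at a time or to use the projective normality to lift, for each prescribed principal part, a quadric realizing it; the Green–Lazarsfeld theorem is what guarantees there are no obstructions to such lifts. I would also need to check the hypothesis $\rho g_{2i-1} \in C^\infty$ is compatible with demanding a nonzero residue — this is the usual tension in Proposition \ref{w}, resolved by taking the \emph{odd} factors deep in $H^0(K_C(-2D))$ while leaving the \emph{even} factors with controlled poles of $g_{2i}=\omega_{2i}/\omega$ along $D$, so that $d(fg_{2i})$ carries the pole producing the residue. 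Once a single such $Q$ with $II(Q)(\zeta\odot\zeta)\ne 0$ is exhibited, the theorem follows.
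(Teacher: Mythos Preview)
Your framework is the paper's: represent $\zeta = [\debar(\rho/\omega)]$ for a form $\omega$ vanishing to order exactly $m_i$ at each $p_i$, and compute $II(Q)(\zeta\odot\zeta)$ via Proposition~\ref{w}, with Lemma~\ref{castelnuovo} supplying the quadrics. The gap is precisely the point you flag as the main obstacle---showing the residue functionals cannot all vanish for $\zeta\neq 0$---and your two suggested remedies (``work one point at a time'', ``lift principal parts'') are gestures rather than an argument.

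The paper closes this gap by \emph{induction on $d$}. Set $F_d=D$, $F_{d-1}=D-p_k$, $I_{d-1}=\ker(\mu_{F_{d-1}})$, $I_d=\ker(\mu_{F_d})$; Lemma~\ref{castelnuovo} makes both $\mu$'s surjective, whence $\dim(I_{d-1}/I_d)=g-d-1>0$. One chooses a basis of $H^0(K_C(-F_{d-1}))$ adapted to the order filtration at $p_k$: $\omega_1$ of order exactly $m_k-1$, $\omega_2$ of order exactly $m_i$ at every $p_i$ (this is the $\omega$ used in the representative of $\zeta$), $\omega_3$ of order $m_k+1$ at $p_k$, and $\omega_j$ ($j\ge 4$) of strictly higher order at $p_k$. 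Writing any $Q\in I_{d-1}$ as $\sum_j\alpha_j\,\omega_1\odot\omega_j+\cK$ with $\cK\in Sym^2 H^0(K_C(-F_d))$, a leading-order inspection at $p_k$ forces $\alpha_1=\alpha_2=0$; the dimension count then guarantees some $Q$ with $\alpha_3\neq 0$, necessarily of the shape $\alpha_3\,\omega_1\odot\omega_3+\beta\,\omega_2\odot\omega_2+\cK'$ with $\beta\neq 0$. Placing $\omega_3$ in the odd slot (so $\rho g_3$ is $C^\infty$) and applying Proposition~\ref{w}, every residue vanishes except the one at $p_k$ coming from the $\omega_1\odot\omega_3$ term, and that residue is a nonzero constant times $b_{m_k,k}^2$. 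Hence asymptoticity forces $b_{m_k,k}=0$, and the inductive hypothesis (applied to $F_{d-1}$) finishes.

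Two small corrections to your sketch: the odd factors need only lie in $H^0(K_C(-D))$, not $H^0(K_C(-2D))$, since $\omega$ has order exactly $m_i$ at $p_i$; and the quadrics one needs already lie in $I_{d-1}\subset I_2(K_C)$, so no separate lifting step from $I_2(K_C(-D))$ into $I_2(K_C)$ is required.
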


\begin{proof}
We will prove the statement by induction on $d$. The case $d =1$ is already known. In fact,  in \cite{cpt}  (see also \cite[Theorem 2.2]{cfg})  it is proven that for any $Q \in I_2(K_C)$, we have $$II(Q)(\xi_p \odot \xi_p) = -2 \pi i \mu_2(Q)(p),$$
and by \cite[Theorem 6.1]{cf-Michigan} we know that if $C$ is not hyperelliptic, and not trigonal of genus $g \geq 5$, for any $p \in C$, there exists a quadric $Q \in I_2(K_C)$ such that $\mu_2(Q)(p) \neq 0$. 

By induction assume that no linear combinations of all $\{\xi^{l_i}_{p_i} \ | \  l_i \leq m_i,  \ i <k,   \ l_k \leq m_{k}-1  \}_{i =1,...,k}$ are asymptotic. 

So take a linear combination $\zeta:= \sum_{i=1}^k \sum_{j = 1}^{m_i} b_{j,i} \xi^{j}_{p_i} $. 
To prove the result, we will show that there exists a quadric $Q \in I_2$ such that $II(Q) (\zeta \odot \zeta) = 0$  if and only if $b_{m_k, k} =0$. Then if $ \zeta$ were asymptotic, $b_{m_k,k} =0$, and by induction we get a contradiction. 

Notice that  $h^0(D) =1$  since $Cliff(C)>d$. Set $F_d := D$, $F_{d-1} := F_d - p_k$. 
By Lemma \ref{castelnuovo} the following multiplication maps  $$\mu_{F_{d-1}}: Sym^2 H^0(K_C(-F_{d-1}))\to H^0(K_C^{\otimes 2}(-2F_{d-1})),$$
$$\mu_{F_d}:Sym^2 H^0(K_C(-F_d))\to H^0(K_C^{\otimes 2}(-2F_d))$$
are surjective.

Set $$I_{d-1} =\ker{\mu_{F_{d-1}}}$$ and $$I_{d}=\ker{\mu_{F_d}}.$$
We have and inclusion $I_{d}\subset I_{d-1}$ and $$\dim (I_{d-1}/I_{d})=g-d-1.$$ 
We would like to fix a basis of $H^0(K_C(-F_{d-1})).$ Recall that by Riemann Roch and the assumption $Cliff(C) >d$, both $K_C(-F_d)$ and $K_C(-F_{d-1})$ are very ample. 
We first take a form $\omega_1\in H^0(K_C(-F_{d-1})) \setminus H^0(K_C(-F_{d}))$, $\omega_2 \in H^0(K_C(-F_d))$ such that $ord_{p_i} \omega_2 =m_i,$ $\forall i =1,...,k$, and $\omega_3 \in H^0(K_C(-F_d))$ such that $ord_{p_k} \omega_3 = m_k +1$. 
Finally   $\omega_4, \dots, \omega_{g-d+1} \in H^0(K_C(-F_{d}))$ such that 
 $ord_{p_k} \omega_i >m_k +1 $, $\forall i =4,..., g-d+1$.

Then any $Q \in I_{d-1}$ can be written as follows: 
$$Q = \sum_{j =1}^{g-d +1} \alpha_j \omega_1 \odot \omega_j + \cK,$$
where  $\cK\in Sym^2(H^0(K_C(-F_{d}))).$

Notice that $\mu_{F_{d-1}}(Q) =0,$  $\alpha_1 \omega_1^2$ is the only term whose vanishing order in $p_k$ is $2m_k-2$, and $\alpha_2 \omega_1 \omega_2$ is the only term whose vanishing order in $p_k$ is $2m_k-1$. So we must have $\alpha_1 = \alpha_2 =0$. Since $\dim (I_{d-1}/I_{d})=g-d-1,$ there exists a quadric $Q \in I_{d-1}$ such that $\alpha_3 \neq 0$. But then since  $ord_{p_k} (\omega_1 \omega_3) =2m_k = ord_{p_k} (\omega_2^2)$ and all the other terms have higher vanishing order in $p_k$, the quadric $Q$ has the form 
$$Q = \alpha_3 \omega_1 \odot \omega_3 + \beta \omega_2 \odot \omega_2 +   \cK',$$
where $ \cK' = \sum_{j \geq 4} \alpha_j \omega_1 \odot \omega_j + \cK''$, with $\cK'' \in Sym^2(H^0(K_C(-F_{d})))$ and $\alpha_3 \neq 0$, $\beta \neq 0$. 

Observe now that we have $\zeta =  \sum_{i=1}^k \sum_{j = 1}^{m_i} b_{j,i} \xi^{j}_{p_i}  = [ \debar(\sum_{i=1}^k \sum_{j = 1}^{m_i} b_{j,i} \frac{z_i^{m_i-j} \rho_{p_i}} {\omega_2})]$, where $\rho_{p_i}$ are  bump functions which are equal to 1 in a neighbourhood of $p_i$ and $z_i$ is a local coordinate around $p_i$. In the notation of section \ref{section2}, $\rho = \sum_{i=1}^k \sum_{j = 1}^{m_i} b_{j,i} z_i^{m_i-j} \rho_{p_i}$.

Using the formula in Proposition \ref{w} we get 
$$II(Q) (\zeta \odot \zeta)= \alpha_3 b_{m_k, k} Res_{p_k}  \frac { \omega_3}{\omega_2} d(\frac { \omega_1}{\omega_2}) = c  \cdot  b_{m_k, k} Res_{0} (z_k d(\frac{1}{z_k})) = -c b_{m_k, k},$$ 
where $c \neq 0$ is a nonzero constant. 

Then $II(Q) (\zeta \odot \zeta)=0$ if and only if  $b_{m_k, k}=0$, and this concludes the proof. 
\end{proof}

\begin{theorem}
\label{cliff>d}
Let $C$ be a smooth curve of genus $g$,  take a positive  integer $d < Cliff(C)$. Then there are no asymptotic directions $\zeta \in H^1(T_C)$ of rank $d$. 

\end{theorem}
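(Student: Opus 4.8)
The plan is to combine the two preceding results in this section. By Theorem \ref{cliff-rank}(1), since $d < Cliff(C)$, any deformation $\zeta \in H^1(T_C)$ of rank $d$ must be a linear combination of (possibly higher) Schiffer variations supported on an effective divisor $D$ of degree $d$; write $D = \sum_{i=1}^k m_i p_i$ with $\sum_i m_i = d$. Concretely this means $\zeta = \sum_{i=1}^k \sum_{j=1}^{m_i} b_{j,i}\, \xi^{j}_{p_i}$ for suitable coefficients $b_{j,i}$, with at least one coefficient nonzero (since $\zeta \neq 0$).

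Next I would observe that we may take $D$ to be the \emph{minimal} such divisor, i.e. the one supporting $\zeta$ for which no proper subdivisor works; equivalently, for each $i$ the top coefficient $b_{m_i, i}$ is nonzero (otherwise $\xi^{m_i}_{p_i}$ does not actually appear and we could shrink $m_i$, hence $D$). Then Theorem \ref{no-schiffer-cliff} applies directly: it asserts precisely that any linear combination of Schiffer variations $\xi^{n_i}_{p_i}$ with $n_i \le m_i$, supported on an effective divisor of degree $d < Cliff(C)$, is not asymptotic. Therefore $\zeta$ is not asymptotic, which is the assertion of the theorem.

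The only point requiring a little care — and the closest thing to an obstacle — is the reduction to a minimal supporting divisor, together with checking that the degree of the minimal divisor is still $< Cliff(C)$. But this is immediate: the minimal divisor $D'$ is a subdivisor of $D$, so $\deg D' \le \deg D = d < Cliff(C)$, and a linear combination of Schiffer variations supported on $D'$ with all top coefficients nonzero is exactly the input hypothesis of Theorem \ref{no-schiffer-cliff}. (Alternatively, one can avoid the reduction entirely: Theorem \ref{no-schiffer-cliff} is stated for \emph{all} linear combinations of the $\xi^{n_i}_{p_i}$ with $n_i \le m_i$, with no requirement that the top coefficients be nonzero, so it covers $\zeta$ as written once we know $\zeta$ has the form given by Theorem \ref{cliff-rank}.) Hence the proof is just the concatenation: rank $d < Cliff(C)$ forces the Schiffer form by Theorem \ref{cliff-rank}, and that form is non-asymptotic by Theorem \ref{no-schiffer-cliff}. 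I would write this as a three-line argument.

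\begin{proof}
Let $\zeta \in H^1(T_C)$ have rank $d$ with $0 < d < Cliff(C)$. By Theorem \ref{cliff-rank}(1), $\zeta$ is a linear combination of (possibly higher) Schiffer variations supported on an effective divisor $D = \sum_{i=1}^k m_i p_i$ of degree $d$, say $\zeta = \sum_{i=1}^k \sum_{j=1}^{m_i} b_{j,i}\, \xi^{j}_{p_i}$. Since $\deg D = d < Cliff(C)$, Theorem \ref{no-schiffer-cliff} applies to this linear combination and shows that $\zeta$ is not an asymptotic direction.
\end{proof}
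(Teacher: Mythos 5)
Your proof is correct and is essentially identical to the paper's own two-line argument: apply Theorem \ref{cliff-rank}(1) to write $\zeta$ as a linear combination of Schiffer variations on a degree-$d$ divisor, then invoke Theorem \ref{no-schiffer-cliff}. The digression about minimal supporting divisors is unnecessary, as you yourself note, since Theorem \ref{no-schiffer-cliff} covers arbitrary coefficients.
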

\begin{proof}
Take an element $\zeta \in H^1(T_C)$ of rank $d$. By Theorem \ref{cliff-rank}, $\zeta$ is a linear combination of Schiffer variations supported on a divisor $D$ of degree $d$, so we conclude by Theorem \ref{no-schiffer-cliff}. 
\end{proof}


Notice that, since the Clifford index of  the general curve of genus $g$ is  $\lfloor\frac{g-1}{2}\rfloor$,  Theorem \ref{cliff-rank}(1),  and Theorem  \ref{cliff>d} imply the following 

\begin{corollary}
Let $C$ be  a general curve in ${\mathcal M}_g$. Then
\begin{enumerate}
\item all tangent directions $\zeta$ of rank $d$, with  $0<d< \lfloor\frac{g-1}{2}\rfloor$,  are linear combinations of Schiffer variations supported on an effective divisor of degree $d$. 
\item There are no asymptotic directions of rank $d$, with  $0<d< \lfloor\frac{g-1}{2}\rfloor$. 
\end{enumerate}

\end{corollary}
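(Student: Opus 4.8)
The plan is to deduce the statement directly from Theorem \ref{cliff-rank}(1) and Theorem \ref{cliff>d}, the only extra input being the classical fact that a general curve $C \in {\mathcal M}_g$ has maximal Clifford index $Cliff(C) = \lfloor\frac{g-1}{2}\rfloor$. First I would recall why this holds. By the Brill--Noether theory of general curves, a general curve of genus $g$ carries no linear series $g^r_d$ with negative Brill--Noether number $\rho = g - (r+1)(g-d+r)$; in particular its gonality equals $\lceil\frac{g+2}{2}\rceil$, realised by a base-point-free pencil of that degree. Together with the inequality $Cliff(C) + 2 \leq gon(C) \leq Cliff(C) + 3$ recalled above (from \cite{coppens-martens}) and the general upper bound $Cliff(C) \leq \lfloor\frac{g-1}{2}\rfloor$ valid for every curve, one gets $Cliff(C) = \lfloor\frac{g-1}{2}\rfloor$ for $C$ general. (Alternatively, one simply cites that the generic curve has maximal Clifford index.)

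Granting this, fix $d$ with $0 < d < \lfloor\frac{g-1}{2}\rfloor = Cliff(C)$, so that $d < Cliff(C)$. For assertion (1), any deformation $\zeta \in H^1(T_C)$ of rank $d$ then satisfies hypothesis (1) of Theorem \ref{cliff-rank}, which asserts precisely that $\zeta$ is a linear combination of Schiffer variations supported on an effective divisor $D$ of degree $d$; this is (1). For assertion (2), the same inequality $d < Cliff(C)$ places us under the hypothesis of Theorem \ref{cliff>d}, which states that there are no asymptotic directions of rank $d$; this is (2). Equivalently, one could combine (1) with Theorem \ref{no-schiffer-cliff}, which rules out asymptotic linear combinations of Schiffer variations supported on a divisor of degree $d < Cliff(C)$.

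There is essentially no obstacle here: the corollary is a direct specialisation of the two theorems to the generic curve, and the only point deserving a word of justification is the input $Cliff(C) = \lfloor\frac{g-1}{2}\rfloor$ for $C$ general, which is standard and which I would cite rather than reprove.
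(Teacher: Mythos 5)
Your proposal is correct and follows exactly the paper's own route: the corollary is stated as an immediate consequence of Theorem \ref{cliff-rank}(1) and Theorem \ref{cliff>d} once one knows that the general curve has Clifford index $\lfloor\frac{g-1}{2}\rfloor$, which is precisely what you do. The extra Brill--Noether justification of the maximal Clifford index is harmless but not needed beyond a citation.
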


\begin{remark}
For the general curve of genus $g$ we characterise the image of the natural map ${\mathbb P}T_{C_d}\to {\mathbb P}(H^1(T_C))$
as the locus of deformations of rank at most $d$, for all $0<d< \lfloor\frac{g-1}{2}\rfloor$.  

Moreover for such values of $d$ the base locus of the linear system of quadrics $II(I_2)$ in ${\mathbb P}H^1(T_C)$, does not contain any point $[\zeta]$ with with $Rank(\zeta) =d$. 
\end{remark}

\section{Double-split deformations}
We will give some computation of the second fundamental form along some tangent directions.
Let $C$ be a curve of genus $g$ we assume $C$  non hyperelliptic.
Let  $L$ be a line bundle on $C$ and $M=K_C \otimes L^{\vee}$. We assume that $h^0(L)>1$ and $h^0(M)>1.$ 
We consider the map $$\phi:\bigwedge^2 H^0(L)\otimes \bigwedge^2 H^0(M)\to I_2,$$

\begin{equation}
\label{qua}
\phi((s_1\wedge s_2)\otimes (\tau_1\wedge \tau_2))= (s_1\tau_1)\odot(s_2\tau_2)-(s_1\tau_2)\odot(s_2\tau_1).\end{equation}
\begin{remark} (see \cite{AM})
\label{triqua}
If $C$ is a trigonal curve of genus $g \geq 4$ and $L$ is a $g^1_3$ the map $\phi$ is an isomorphism. 
\end{remark}

Set $N: = K_C \otimes L^{-2}$ and assume $|L|$ is base point free, that $h^0(L) \geq 2$, and $h^0(N) \geq 1$. 
Fix a nonzero section $t \in H^0(N)$, a base point free pencil $\langle s_1, s_2 \rangle$ in $H^0(L)$, such that $s_1$, and $s_2$ have  zeros disjoint from the zeros of $t$. Consider the quadric 
$$Q = (s^2_1t)\odot(s_2^2t)-(s_1s_2 t)\odot(s_1 s_2 t) \in I_2,$$
set $\omega := s_1s_2t$, denote by $D$ the zero locus of $s_1$, and take $\zeta:= [ \debar(\frac{\rho_D}{\omega})] \in H^1(T_C)$. 

\begin{definition}
\label{double_split}
A nonzero deformation $\zeta \in H^1(T_C)$ is said to be double-split if it is split with $\omega = s_1 \tau$, $s_1 \in H^0(L)$, $\tau  \in H^0(K_C \otimes L^{\vee})$ and $ \tau = s_2 t$ with $s_2 \in H^0(L)$ and $t \in H^0(K_C \otimes L^{-2})$. 
\end{definition}

\begin{proposition}
\label{applicazione}
With the above notation, if $\zeta$ is double-split, and $Q = (s^2_1t)\odot(s_2^2t)-(s_1s_2 t)\odot(s_1 s_2 t),$ 
we have 
$$II(Q)(\zeta \odot \zeta) = 2 \pi i (deg(L))  \neq 0.$$
So $\zeta$ is not an asymptotic direction. 
\end{proposition}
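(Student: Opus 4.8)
The plan is to apply Proposition \ref{w} directly. The deformation $\zeta = [\debar(\frac{\rho_D}{\omega})]$ is split with $\omega = s_1\tau = s_1 s_2 t$, so by the remark following Lemma \ref{formula} (the split case of \eqref{eterm}), the relevant residue term simplifies: for split $\zeta$ one has $w(\zeta,\omega_1,\omega_2) = 2\pi i\sum_{p_i\in\mathrm{Supp}(D)}\mathrm{Res}_{p_i}(g_1\,d(g_2))$, where $g_i = \omega_i/\omega$ and $D = \mathrm{div}(s_1)$. The quadric is $Q = (s_1^2 t)\odot(s_2^2 t) - (s_1 s_2 t)\odot(s_1 s_2 t)$, which we must first write in the normalized form $Q = \sum_{i=1}^s \omega_{2i-1}\odot\omega_{2i}$ required by Proposition \ref{w}; here $s=2$ with the pair $(\omega_1,\omega_2) = (s_1^2 t, s_2^2 t)$ and the pair $(\omega_3,\omega_4) = (s_1 s_2 t, -s_1 s_2 t)$.

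First I would check the hypothesis of Proposition \ref{w}: that $\rho g_{2i-1}$ is $C^\infty$ for each $i$. We have $g_1 = \omega_1/\omega = s_1^2 t/(s_1 s_2 t) = s_1/s_2$, and since $s_2$ has zeros disjoint from those of $s_1$ and $\rho = \rho_D$ is supported near $D = \mathrm{div}(s_1)$, the product $\rho g_1$ is smooth. Similarly $g_3 = \omega_3/\omega = 1$, so $\rho g_3$ is trivially smooth. Hence Proposition \ref{w} applies and gives
$$
II(Q)(\zeta\odot\zeta) = -\bigl(w(\zeta,\omega_1,\omega_2) + w(\zeta,\omega_3,\omega_4)\bigr).
$$
The second term vanishes: $g_3 = 1$ and $g_4 = -1$, so $g_3\,d(g_4) = 0$ and every residue is zero. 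It remains to compute the first term. Here $g_1 = s_1/s_2$ and $g_2 = \omega_2/\omega = s_2^2 t/(s_1 s_2 t) = s_2/s_1$, so $g_1 g_2 = 1$ and
$$
g_1\,d(g_2) = \frac{s_1}{s_2}\,d\!\left(\frac{s_2}{s_1}\right) = -\frac{1}{s_1}\,d(s_1) + \frac{1}{s_2}\,d(s_2),
$$
using $d(s_2/s_1)/(s_2/s_1) = d(s_2)/s_2 - d(s_1)/s_1$. Near a point $p_i\in\mathrm{Supp}(D)$ of multiplicity $m_i$ in $D$, the form $d(s_2)/s_2$ is holomorphic (no zero of $s_2$ there), while $-d(s_1)/s_1 = -d\log s_1$ has a simple pole with residue $-m_i$. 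Summing, $\sum_{p_i}\mathrm{Res}_{p_i}(g_1\,d(g_2)) = -\sum_i m_i = -\deg(D) = -\deg(L)$, since $D = \mathrm{div}(s_1)$ and $s_1\in H^0(L)$. Therefore $w(\zeta,\omega_1,\omega_2) = 2\pi i\cdot(-\deg(L)) = -2\pi i\deg(L)$, and
$$
II(Q)(\zeta\odot\zeta) = -\bigl(-2\pi i\deg(L) + 0\bigr) = 2\pi i\,\deg(L).
$$
Since $\deg(L) > 0$ (because $h^0(L)\geq 2$ on a curve of genus $\geq 2$ forces $L$ nontrivial and effective), this is nonzero, so $\zeta$ is not asymptotic.

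The only subtlety, and the step I would be most careful about, is bookkeeping the identification of $Q$ with a sum of the form $\sum \omega_{2i-1}\odot\omega_{2i}$ and making sure the sign conventions from \eqref{eterm} and Proposition \ref{w} are threaded through correctly — in particular handling the "$-s_1 s_2 t$" in the second pair and confirming it contributes nothing. The residue computation itself is the standard fact that $d\log$ of a section picks up the multiplicity of its divisor, so no real obstacle arises there. I would also note explicitly that the hypothesis on disjoint zero loci of $s_1, s_2, t$ is exactly what makes both the smoothness check and the clean residue count work.
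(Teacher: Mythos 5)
Your proposal is correct and follows essentially the same route as the paper: both apply Proposition \ref{w} with the pairs $(s_1^2t, s_2^2t)$ and $(s_1s_2t, s_1s_2t)$, observe that $\rho_D g_1 = \rho_D s_1/s_2$ is smooth and that the second pair contributes nothing, and reduce to the residue of $x\,d(1/x) = -dx/x$ along $D = \mathrm{div}(s_1)$, yielding $2\pi i\deg(L)$. The only cosmetic difference is that you absorb the minus sign into $\omega_4$ while the paper keeps it outside the decomposition.
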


\begin{proof}
We set $\omega_1 := s_1^2t$, $\omega_2:= s_2^2 t$, $\omega_3 = \omega_4 := \omega = s_1s_2t$, $x := \frac{s_1}{s_2}$.  Then with the notation of Proposition \ref{w}, we have  $g_3  = g_4 =1$, $\rho_D g_1 = \rho_D \frac{s_1}{s_2} = \rho_D x$ is ${\mathcal C}^{\infty}$, since $\rho_D$ is identically zero on the zeros of $s_2$.  Then we can apply Proposition \ref{w} and we get 
$$II(Q) (\zeta \odot \zeta) = - 2 \pi i \sum_{p \in Supp(D)} Res_p(g_1 dg_2 -g_3 dg_4) =  - 2 \pi i \sum_{p \in Supp(D)} Res_p(g_1 dg_2) = $$
$$=-  2 \pi i \sum_{p \in Supp(D)} Res_p(x d(\frac{1}{x})) =   2 \pi i \sum_{p \in Supp(D)} Res_p(\frac{dx}{x})= 2 \pi i   (deg(D) )=  2 \pi i (deg(L)).$$

\end{proof}

\section{Rank $d=Cliff(C)$ deformations}

In this section we will consider infinitesimal deformations of rank $d= Cliff(C)$ and we will give sufficient conditions under which $\zeta$ is not asymptotic. 

Take a smooth non hyperelliptic curve $C$ of genus $g \geq 4$ and an element $\zeta \in H^1(T_C)$, corresponding to the class of an extension as in \eqref{extension}. 


Taking gobal sections in \eqref{extension}, if $\zeta$ has rank $d$, we get $h^0(E) = g-d+1$. 



\begin{remark}
\label{bau}
Assume $C$ is not hyperelliptic. Recall that if $\zeta \in H^1(T_C)$ is a non trivial deformation of rank $d= Cliff(C) < \frac{g-1}{2}$ and $E$ is not globally generated, then by Theorem \ref{cliff-rank},  $\zeta$ is a linear combination of  Schiffer variations supported on a degree $d$ effective divisor. 
\end{remark}

\begin{theorem}
\label{cliff=d}
Assume $\zeta \in H^1(T_C)$ is an infinitesimal deformation of positive rank $d$ on a curve $C$ of Clifford index $d \neq \frac{g-1}{2}$ and $g \geq 5$. Assume moreover that $Cliff(C) = Cliff(L) = gon(C) -2$, where $L$ is a $g^1_{d+2}$ on $C$. 
Assume that $\zeta$ is not a linear combination of  Schiffer variations supported on a degree $d$ effective divisor. 

\begin{itemize}
\item If for every such $L$, $h^0(L^{\otimes 2}) = 3$, then $\zeta$ is split. 
\item  If for every such $L$, $h^0(L^{\otimes 2}) = 3$ and $h^0(L^{\otimes 3}) = 4,$ then $\zeta$ is double split and it is not an asymptotic direction. 

\end{itemize}

\end{theorem}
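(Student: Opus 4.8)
The plan is to study the extension \eqref{extension} attached to $\zeta$ and the rank-$2$ bundle $E$, using the hypothesis $\mathrm{rank}(\zeta)=d=Cliff(C)=gon(C)-2$. First I would rerun the argument of Theorem \ref{cliff-rank}: taking global sections gives $h^0(E)=g-d+1$, and Segre--Nagata--Ghione produces a sub-line bundle $A\subset E$ (saturated) with $\deg A\ge \frac{g-1}{2}$, sitting in a diagram with quotient $K_C\otimes A^\vee$ and with both maps $A\to K_C$ and $\cO_C\to K_C\otimes A^\vee$ nonzero. As in that proof one gets $h^0(A)\ge 2$ and $Cliff(A)\le d$; since $d=Cliff(C)$ this forces $Cliff(A)=d$, so $A$ computes the Clifford index. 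Because $\zeta$ is assumed \emph{not} a linear combination of Schiffer variations supported on an effective degree-$d$ divisor, Remark \ref{bau} (i.e. Theorem \ref{cliff-rank}(2)) rules out the possibility that $E$ is not globally generated in the borderline case, so in fact we land in the situation where $H^0(A)\to H^0(E)\to H^0(K_C\otimes A^\vee)\to 0$ is exact and $E$ is globally generated. The hypothesis $Cliff(C)=Cliff(L)$ with $L$ a $g^1_{d+2}$ then lets me identify $A$, up to the ambiguity of which minimal pencil is used: by Martens/Mumford-type classification of line bundles computing the Clifford index (together with $d\ne\frac{g-1}{2}$, excluding the self-dual middle case), $A$ is either $L$ or $K_C\otimes L^\vee$ for some $g^1_{d+2}$, and in either case $E$ is an extension of one minimal pencil by its Serre-dual; after relabelling, $E$ fits in $0\to \cO_C\to E\to K_C\to 0$ with the sub-bundle $A$ and quotient $K_C\otimes A^\vee$ both of the form (pencil) / (Serre-dual of pencil).

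Next, under the first bullet's hypothesis $h^0(L^{\otimes 2})=3$: I want to show $\zeta$ is split in the sense of Definition \ref{split}, i.e. $E\cong (K_C\otimes L^\vee)\oplus L$. The point is to show the extension class of $0\to \cO_C\to E\to K_C\to 0$, viewed after the reductions above, actually splits the sub-bundle off. Concretely, $E$ is globally generated with $h^0(E)=g-d+1=h^0(L)+h^0(K_C\otimes L^\vee)-1$, so $H^0(E)=H^0(L)\oplus (\text{a hyperplane in } H^0(K_C\otimes L^\vee))$ compatibly; I would use the two pencils to build, via the multiplication $H^0(L)\otimes H^0(K_C\otimes L^\vee)\to H^0(K_C)$, an explicit splitting $E\cong L\oplus(K_C\otimes L^\vee)$, the obstruction living in a piece of $H^1$ that is controlled exactly by $h^0(L^{\otimes 2})=3$ (so that no ``extra'' sections of $L\otimes L$ appear to twist the gluing). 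Then Proposition \ref{prop_split} applies: $\zeta=\delta(\sigma_D)=[\debar(\rho_D/\omega)]$ with $\omega=s\tau$, $s\in H^0(L)$, $\tau\in H^0(K_C\otimes L^\vee)$, $D=\mathrm{div}(s)$ of degree $d+2$ and $h^0(\cO_C(D))=h^0(L)=2$, consistent with $rank(\zeta)=\deg D-2\cdot 1=d$. This gives the first bullet.

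For the second bullet, add $h^0(L^{\otimes 3})=4$. Now I need to upgrade ``split'' to ``double-split'' (Definition \ref{double_split}): that $\tau=s_2 t$ with $s_2\in H^0(L)$ and $t\in H^0(N)$, $N=K_C\otimes L^{-2}$, so that $\omega=s_1 s_2 t$. The mechanism: $\tau\in H^0(K_C\otimes L^\vee)=H^0(L\otimes N)$; I must factor $\tau$ through a section of $L$. The dimension count $h^0(L^{\otimes 2})=3$, $h^0(L^{\otimes 3})=4$ forces $N=K_C\otimes L^{-2}$ to behave like (a twist of) a point-bundle/effective divisor of small degree: indeed $\deg N=2g-2-2(d+2)$ and Riemann--Roch with these $h^0$'s pins down $h^0(N)$ and the structure of the pencils, making the multiplication $H^0(L)\otimes H^0(N)\to H^0(L\otimes N)=H^0(K_C\otimes L^\vee)$ surjective enough to realize the chosen $\tau$ as $s_2 t$ with the zeros of $s_2$ disjoint from those of $s_1$ and of $t$ (a generic-choice argument inside the pencil $|L|$). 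Once $\zeta$ is double-split, I simply invoke Proposition \ref{applicazione}: with $Q=(s_1^2 t)\odot(s_2^2 t)-(s_1 s_2 t)\odot(s_1 s_2 t)\in I_2$ one computes $II(Q)(\zeta\odot\zeta)=2\pi i\deg(L)\ne 0$, so $\zeta$ is not asymptotic.

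The main obstacle I anticipate is the classification/identification step in the middle: pinning down $A$ (equivalently $E$) from ``$A$ computes the Clifford index and $d\ne\frac{g-1}{2}$,'' and then extracting genuine splitting of $E$ rather than merely the Clifford-theoretic inequalities. The hypotheses $Cliff(C)=Cliff(L)=gon(C)-2$ and the control of $h^0(L^{\otimes 2})$ (and $h^0(L^{\otimes 3})$) are exactly what is needed to kill the cohomological obstructions to splitting and to the further factorization $\tau=s_2t$; making those vanishings precise — including arranging disjointness of divisors so that all the residue computations in Propositions \ref{w} and \ref{applicazione} are legitimate — is where the real work lies. The reductions at the start and the final residue computation are, by contrast, essentially quotations of Theorem \ref{cliff-rank} and Proposition \ref{applicazione}.
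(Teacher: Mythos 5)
Your skeleton matches the paper's strategy: use Remark \ref{bau} to get $E$ globally generated, produce a sub-line bundle of $E$ computing the Clifford index via Segre--Nagata--Ghione, split the extension using $h^0(L^{\otimes 2})=3$ and the base point free pencil trick, upgrade to double-split using $h^0(L^{\otimes 3})=4$, and finish with Proposition \ref{applicazione}. However, the two steps you treat as routine are where the real content lies, and as written both have genuine gaps.

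First, the identification of the sub-bundle. You concede that $A$ could be either $L$ or $K_C\otimes L^{\vee}$ and claim the two cases are interchangeable ``after relabelling''. They are not: the obstruction to splitting the sub-bundle $A$ off $E$ lives in $H^1(A^{\otimes 2}\otimes K_C^{-1})$, and the vanishing argument via the dual of the multiplication $Sym^2H^0(L)\to H^0(L^{\otimes 2})$ (surjective precisely because $h^0(L^{\otimes 2})=3$) applies only when $A=K_C\otimes L^{\vee}$, so that the obstruction space is $H^1(K_C\otimes L^{-2})\cong H^0(L^{\otimes 2})^{\vee}$. If instead $A=L$, the obstruction sits in $H^1(L^{\otimes 2}\otimes K_C^{-1})\cong H^0(K_C^{\otimes 2}\otimes L^{-2})^{\vee}$, and nothing in the hypotheses kills it. The paper excludes $A=L$ by applying Segre--Nagata--Ghione not to $E$ but to $E(-p-q)$ for two \emph{general} points $p,q$: one shows the resulting sub-bundle $A$ of $E(-p-q)$ satisfies $h^0(A)\geq 1$, so $A(p+q)\subset E$ carries a section vanishing at two general points and therefore cannot be a $g^1_{d+2}$; hence $A(p+q)=K_C\otimes L^{\vee}$. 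Your plan omits this twist, and without it the case $A=L$ is not ruled out.

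Second, the factorization of $\tau$. Surjectivity of $H^0(L)\otimes H^0(K_C\otimes L^{-2})\to H^0(K_C\otimes L^{\vee})$ (which does follow from $h^0(L^{\otimes 3})=4$ via the base point free pencil trick and a dimension count) only writes $\tau=st+s't'$ as a sum of two decomposable tensors; it does not make $\tau$ itself decomposable, and for a given split $\zeta$ the pair $(s,\tau)$ is essentially determined by the extension, so a ``generic-choice argument inside the pencil'' cannot replace $\tau$ by a product $s_2t$. The paper's mechanism is different: it keeps $\tau=st+s't'$ and shows that the Dolbeault class $[\debar(\rho_D/(s\tau))]$ coincides with $[\debar(\rho_D/(ss't'))]$, because the difference of the two meromorphic vector fields is $-t/(s't'\tau)$, which is regular along $D=\mathrm{div}(s)$ since $s$, $\tau$ have disjoint zeros; hence $\zeta$ is double-split with respect to the new form $\omega'=ss't'$, and Proposition \ref{applicazione} applies. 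This change-of-representative trick is the missing idea in your second bullet.
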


\begin{proof}
By the assumptions and by Remark \ref{bau}, we know that $E$ is globally generated. Take two generic points $p, q \in C$, then since $g \geq 5$, there exists a nontrivial holomorphic section $\sigma \in H^0(E(-p-q))$. 

Tensoring \eqref{extension} by ${\mathcal O}_C(-p-q)$, we obtain an extension 
\begin{equation}
\label{extension-new}
0 \to {\mathcal O}_C(-p-q) \to E' \to K_C(-p-q) \to 0,
\end{equation}
where $E' = E \otimes {\mathcal O}_C(-p-q)$, so $h^0(E') \geq g-d-3 >0$. 
By the theorem of Segre-Nagata and Ghione, (\cite{LazII} p. 84),  there exists a subline bundle $A$ of $E'$ such that $deg(A) \geq \frac{g-5}{2}$. We claim that $h^0(A) \geq 1$. In fact, if $h^0(A) =0$, by Riemann Roch we would have $h^0(K_C\otimes A^{\vee}) = g-1 - deg(A) \leq \frac{g+3}{2}$. So, since $p,q$ are generic points, $h^0(K_C\otimes A^{\vee}( -2p-2q)) \leq \frac{g-5}{2}$. On the other hand,  up to saturation of $A$, so that the quotient $E'/A$ is torsion free, we have the exact sequence
$$ 0 \to A \to E' \to K_C\otimes A^{\vee}( -2p-2q) \to 0,$$
and $h^0(K_C\otimes A^{\vee}( -2p-2q)) \geq  h^0(E') \geq g-d -3$. So we get $d \geq \frac{g-1}{2}$, which contradicts our assumption. So $h^0(A) \geq 1$. We have the following diagram:

$$
\xymatrix{
& & 0\ar[d]&\\
             &  & A(p+q) \ar[d] & & \\
    0 \ar[r]& {\mathcal O}_C \ar[r] & E\ar[d]\ar[r]& K_C\ar[r] & 0 \\
     & & K_C\otimes A^{\vee}( -p-q)\ar[d]& \\
     &  & 0& \\
    &&&& }           
$$
We have $h^0(A(p+q)) \geq 1$ and $deg(A(p+q)) \geq \frac{g-1}{2}$. 
If $h^0(A(p+q)) =1$, then $h^0(K_C\otimes A^{\vee}( -p-q))= g-2 - deg(A) \leq \frac{g+1}{2}$, but $h^0(K_C\otimes A^{\vee}( -p-q)) \geq h^0(E) -1 =g-d$, so we get $d \geq \frac{g-1}{2}$, a contradiction. So $h^0(A(p+q))  \geq 2$ and $h^0(K_C\otimes A^{\vee}( -p-q)) \geq 2$, since $E$ is globally generated, hence also $K_C\otimes A^{\vee}( -p-q)$ is globally generated and non trivial. 

So $A(p+q)$ contributes to the Clifford index and we have: 
$$h^0(A(p+q)) - h^0(K_C\otimes A^{\vee}( -p-q)) = deg(A) -g+3,$$
$$h^0(A(p+q)) + h^0(K_C\otimes A^{\vee}( -p-q))\geq g-d +1.$$
So summing up we get 
$$2 h^0(A(p+q)) \geq deg(A) -d +4,$$
hence $d \leq Cliff(A(p+q)) \leq d$, and either $A(p+q) =L$, or $A(p+q) =K_C \otimes L^{\vee}$, where $L$ is any line bundle as in the statement. But $A(p+q) \neq L$, since $p,q$ are general and $A$ is effective. So $A(p+q) =K_C \otimes L^{\vee}$, and  the vertical exact sequence induces an exact sequence on global sections. So it corresponds to a class of an extension $\epsilon \in H^1(K_C \otimes L^{-2})$ that induces the zero map $H^0(L) \to H^1(K_C \otimes L^{\vee})$. The map $H^1(K_C \otimes L^{-2}) \cong H^0(L)^{\vee} \rightarrow Hom(H^0(L), H^1(K_C \otimes L^{\vee})) \cong H^0(L)^{\vee} \otimes H^0(L)^{\vee}$, $v \mapsto \cup v$, is the dual of the multiplication map $ H^0(L) \otimes  H^0(L) \rightarrow  H^0(L^{\otimes 2})$ and $\epsilon$ is in the kernel of this map. If $h^0( L^{\otimes 2}) =3$, by the base point free pencil trick, the multiplication map $Sym^2 H^0(L) \to H^0(L^{\otimes 2})$ is surjective, so $\epsilon$ must be zero. Hence $E = (K_C \otimes L^{\vee}) \oplus L$ and $\zeta$ is split.

By Proposition \ref{prop_split},   the diagram becomes:

\begin{equation}
\label{spezzato}
\xymatrix{
& & 0\ar[d]&\\
             &  & K_C \otimes L^{\vee} \ar[d] & & \\
    0 \ar[r]& {\mathcal O}_C \ar[r]^{(\tau,s) \ \ \ \  \ \ } &  (K_C \otimes L^{\vee} )\oplus L \ar[d]\ar[r]^{\ \ \ \ \ s-\tau}& K_C\ar[r] & 0 \\
     & & L\ar[d]& \\
     &  & 0& \\
    &&&& }           
\end{equation}
where $s \in H^0(L)$ and $\tau \in H^0( K_C \otimes L^{\vee})$ have disjoint zero loci. 
Then $ker (\cup \zeta) = s \cdot H^0( K_C \otimes L^{\vee}) + \tau \cdot H^0(L),$ which has dimension $g-d$. 

Assume now $h^0(L^{\otimes 3}) = 4$, then the multiplication map 

$$m: H^0(L)  \otimes H^0(K_C\otimes L^{-2}) \to H^0(K_C \otimes L^{\vee})$$
is surjective, since $ker(m) = H^0(K_C \otimes L^{-3})$ by the base point free pencil trick. So $\tau = s t + s' t'$, with $s' \in H^0(L)$, $t, t' \in H^0(K_C \otimes L^{-2})$. 
We claim that if we take $\omega' = s \tau'$, with $\tau' = s' t'$, then $[ \debar(\frac{\rho_D}{\omega'})]  =  [ \debar(\frac{\rho_D}{\omega})] = \zeta$, where $D$ is the zero divisor of $s$ and $\omega = s \tau$. 

In fact $[ \debar(\frac{\rho_D}{\omega}) - \debar(\frac{\rho_D}{\omega'})] =  [\debar(\rho_D (\frac{-t}{s't'(st + s't')}))] = 0$, since $\rho_D (\frac{-t}{s't'(st + s't')})$ is  a ${\mathcal C}^{\infty}$ vector field, as $\frac{-t}{s't'(st + s't')}$ has no poles on $D$ by construction.

So $\zeta$ is double-split and we apply Proposition \ref{applicazione} to conclude.


\end{proof}

\section{Rank one deformations}

In this section we consider infinitesimal deformations $\zeta$ of rank 1 on a non hyperelliptic curve $C$ of genus $g \geq 4$, and we ask whether these can be asymptotic. In  Theorem \ref{cliff>d} we proved that if  $Cliff(C) >1$, there are no asymptotic directions of rank 1. So we will assume $Cliff(C) =1$, hence $C$ is either trigonal or isomorphic to a smooth plane quintic. 

We will show that if $C$ is trigonal of genus $g \geq 8$, or $g=6,7$ with Maroni degree 2, then there do not exist asymptotic directions of rank 1, except for the Schiffer variations at a ramification point of the $g^1_3$, which are asymptotic. 

We will also prove that on a smooth plane quintic there are no rank one asymptotic directions.

In section \ref{Maroni} we will show that if $g=5$, or $g=6,7$ and Maroni degree 1, there are examples of curves admitting asymptotic directions different from Schiffer variations at  ramification points, and we will describe these loci.

Take a smooth non hyperelliptic curve $C$ of genus $g \geq 4$ and an element $\zeta \in H^1(T_C)$. The cup product 
$\cup \zeta: H^0(K_C) \to H^1({\mathcal O}_C),$
corresponds to an element $\gamma_{\zeta}$ in $S^2 H^1({\mathcal O}_C)$. 
We have the following exact sequence given by the differential of the period map (dual to the multiplication map): 

$$ 0 \to H^1(T_C)\stackrel{\gamma} \to S^2 H^1({\mathcal O}_C) \to I_2^{\vee} \to 0, $$
where $\gamma(\zeta) = \gamma_{\zeta}$.

\begin{remark}
\label{quadrica}
The rank of $\zeta$ is equal to the rank of the quadric $\gamma_{\zeta}$.
\end{remark}

So an element $\zeta \in H^1(T_C)$ has rank one if and only if $[\gamma_{\zeta}] \in {\mathbb P}  (S^2 H^1({\mathcal O}_C)) $ lies in the image the Veronese map 
$$ {\mathbb  P}  (H^1({\mathcal O}_C)) \to {\mathbb P}  (S^2 H^1({\mathcal O}_C)).$$

Recall that by \cite{griffiths} if $C$ is non hyperelliptic, non trigonal and not isomorphic to a smooth plane quintic, the only rank one elements $[\zeta] \in {\mathbb P}(H^1(T_C))$ are given by the classes of the Schiffer variations $[\xi_p]$, that are the points of the bicanonical curve. If $C$ is trigonal the rank one elements correspond to the Veronese image of the ruled surface containing the canonical curve, and if $C$ is a smooth plane quintic they correspond to the points of the Veronese image of the Veronese surface in $ {\mathbb  P}  (H^1({\mathcal O}_C))\cong {\mathbb P}^5$.

Since $\zeta$ has rank 1, in the extension \eqref{extension} corresponding to $\zeta$,  we get $h^0(E) = g$.

\begin{lemma}
\label{lem_schiffer}
Assume $C$ is not hyperelliptic, $\zeta \in H^1(T_C)$ a rank one deformation.  If $E$ is not globally generated, then $\zeta = \xi_p$ is a Schiffer at a point $p$.  
If $C$ is non trigonal, then $\forall p \in C$, $\xi_p$ is not an asymptotic direction. 
If $C$ is trigonal and $g \geq 6$, then $\xi_p$ is an asymptotic direction if and only if $p$ is a ramification point of the $g^1_3$. 

\end{lemma}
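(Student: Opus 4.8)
\textbf{Proof plan for Lemma \ref{lem_schiffer}.}

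The plan is to split the statement into three assertions and handle them in turn, using the structure theory of the rank two extension bundle $E$ together with the residue formula of Proposition \ref{w}. First I would treat the claim that if $E$ is not globally generated then $\zeta=\xi_p$ for some $p$. Since $\zeta$ has rank $1$, taking global sections in \eqref{extension} gives $h^0(E)=g$, so $H^0(E)\to H^0(K_C)$ has corank $1$ and its image is a hyperplane $H^0(K_C(-p))$ for some effective divisor; the point is to show this divisor is a single point $p$. If $E$ is not globally generated, at a point where global generation fails all sections vanish, forcing a subsheaf ${\mathcal O}_C(p)\hookrightarrow E$ for some point $p$; saturating and comparing with the exact sequence exactly as in the proof of Theorem \ref{cliff-rank} (the Segre--Nagata--Ghione argument, or just a direct diagram chase since the rank is $1$) shows the image of $H^0(E)$ in $H^0(K_C)$ is $H^0(K_C(-p))$, so that $\zeta$ lies in the kernel of $H^1(T_C)\to H^1(T_C(p))$, which is the one-dimensional space $\langle\xi_p\rangle$ of Schiffer variations at $p$ (as recalled in section \ref{section2}). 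Hence $\zeta=\xi_p$ up to scalar.

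Next, for the non-trigonal case: by formula \eqref{xip} we have $II(Q)(\xi_p\odot\xi_p)=-2\pi i\,\mu_2(Q)(p)$, so $\xi_p$ is asymptotic iff $\mu_2(Q)(p)=0$ for every $Q\in I_2$, i.e.\ iff $p$ is a base point of the second Gaussian map $\mu_2$. I would invoke the result of \cite{cf-Michigan} (Theorem 6.1 there, already cited in the proof of Theorem \ref{no-schiffer-cliff}) stating that for $C$ non-hyperelliptic and non-trigonal of genus $g\geq 5$ — and the plane quintic case $g=6$ handled separately or included — the second Gaussian map has no base points; hence $\xi_p$ is never asymptotic. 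One must double-check the low genus $g=4,5$ non-trigonal situations, but a genus $4$ non-hyperelliptic curve is trigonal, so only $g\geq 5$ is relevant and the cited theorem applies.

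Finally, for the trigonal case with $g\geq 6$: again $\xi_p$ is asymptotic iff $\mu_2(Q)(p)=0$ for all $Q\in I_2$. Here I would use the explicit description of $I_2$ for trigonal curves via the isomorphism $\phi$ of Remark \ref{triqua}: every quadric through the canonical curve is $\phi((s_1\wedge s_2)\otimes(\tau_1\wedge\tau_2))$ with $s_i\in H^0(L)$, $\tau_j\in H^0(M)$, $L=g^1_3$, $M=K_C\otimes L^\vee$. The base locus of $\mu_2$ on a trigonal curve is classically the set of ramification points of the $g^1_3$ (this is essentially in \cite{cf-Michigan}); alternatively one computes $\mu_2(\phi(\cdot))(p)$ directly in local coordinates and sees it vanishes for all choices precisely when $p$ is a ramification point of $|L|$, where the two sheets of the triple cover come together. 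So $\xi_p$ is asymptotic iff $p$ is a ramification point of the $g^1_3$.

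The main obstacle I anticipate is the trigonal computation: identifying the base locus of the second Gaussian map $\mu_2$ with the ramification of the $g^1_3$ requires either a clean citation to the literature on Gaussian maps of trigonal curves or an honest local residue/Gaussian-map computation using the $\phi$-description of $I_2$; the first half (non-global-generation forces a Schiffer) is a routine diagram chase, and the non-trigonal half is immediate from \eqref{xip} plus the quoted base-point-freeness of $\mu_2$.
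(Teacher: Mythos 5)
Your first two steps coincide with the paper's proof: the non-globally-generated case is handled by invoking Theorem \ref{cliff-rank} (your diagram chase is just an unwinding of that proof for $d=1$), and the non-trigonal case is exactly formula \eqref{xip} combined with \cite[Theorem 6.1]{cf-Michigan}. Those parts are fine.

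The gap is in the trigonal case, at precisely the point you flag as your main obstacle. The claim you propose to cite or verify --- that the base locus of $\mu_2$ on a trigonal curve is exactly the ramification divisor of the $g^1_3$ --- is false for $g=5$, and the hypothesis $g\geq 6$ in the statement is there precisely to exclude the failure, so a plan that does not engage with that hypothesis cannot close. Concretely, the paper uses \cite[Lemma 2.2]{cf-Michigan} to factor $\mu_2(Q)(p)$ as $\mu_{1,L}(s_1\wedge s_2)(p)\cdot\mu_{1,M}(\tau_1\wedge\tau_2)(p)$ with $M=K_C\otimes L^{\vee}$. The ``if'' direction is then immediate, but for the ``only if'' direction one must produce, for $p$ not a ramification point of $|L|$, a pencil $\langle\tau_1,\tau_2\rangle\subset H^0(M)$ unramified at $p$. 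This requires $h^0(M(-2p))=h^0(M(-p))-1$, which by Riemann--Roch fails exactly when $|L(2p)|$ is a $g^2_5$; the paper rules this out by noting that a $g^2_5$ forces $g\leq 6$, and $g=6$ would make $C$ a smooth plane quintic, hence not trigonal. For $g=5$ a trigonal curve can carry such a $g^2_5$, in which case every pencil in $H^0(M)$ is ramified at $p$ and $\xi_p$ is asymptotic at a non-ramification point of the $g^1_3$. So your ``compute in local coordinates and see it vanishes precisely at ramification points'' is not correct as stated; you need to supply the $g^2_5$ exclusion argument, which is exactly where $g\geq 6$ enters. Otherwise your route is the paper's route.
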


\begin{proof}
Assume $E$ is not globally generated, we conclude by Theorem \ref{cliff-rank} that $\zeta = \xi_p$ is a Schiffer variation at a point $p$.


If $g \geq 5$, $C$ is not hyperelliptic, and not trigonal, then  for any point $p \in C$, $\xi_p$ is not an asymptotic direction. In fact, in \cite{cpt}  (see also \cite[Theorem 2.2]{cfg})  it is proven that for any $Q \in I_2(K_C)$, we have $$II(Q)(\xi_p \odot \xi_p) = -2 \pi i \mu_2(Q)(p),$$
and by \cite[Theorem 6.1]{cf-Michigan} we know that if $C$ is not hyperelliptic, and not trigonal of genus $g \geq 5$, for any $p \in C$, there exists a quadric $Q \in I_2(K_C)$ such that $\mu_2(Q)(p) \neq 0$.

Assume that $C$ is trigonal not hyperelliptic and $g \geq 5$, then  a basis for $I_2(K_C)$ is given by the quadrics of rank at most 4 defined as in \eqref{qua} where $L$ is the $g^1_3$ (see Remark \ref{triqua}).

For such quadrics we have 
$$II(Q)( \xi_p \odot \xi_p) = -2 \pi i \mu_2(Q)(p)= -2 \pi i \mu_{1,L}(s_1 \wedge s_2)(p) \mu_{1, K_C(-L)}(\tau_1 \wedge \tau_2)(p),$$
 (see \cite{cpt} for the first equality, and \cite[Lemma 2.2]{cf-Michigan} for the second one). 

If $C$ is trigonal and $p$ is a ramification point of the $g^1_3$, $\rho(Q)( \xi_p \odot \xi_p) =0$, since $\mu_{1,L}(s_1 \wedge s_2)(p)=0$. 

Assume $C$ trigonal and $p$ not a ramification point of the $g^1_3$, then $h^0(K_C\otimes L^{\vee}(-p)) = h^0(K_C\otimes L^{\vee})-1$, otherwise $|L(p)|$ is a $g^2_4$ and $C$ would be  hyperelliptic. We claim that if $g \geq 6$,  $h^0(K_C\otimes L^{\vee}(-2p)) = h^0(K_C\otimes L^{\vee}(-p))-1$. In fact, otherwise $|L(2p)|$ would be a $g^2_5$, so it would give a  map $C \to {\mathbb P}^2$ of degree 1, hence $g \leq 6$. So $g=6$ and $C$ would be  a smooth plane quintic, hence not trigonal. So if we take a pencil $\langle \tau_1, \tau_2 \rangle \subset H^0(K_C\otimes L^{\vee})$ such that  $\langle \tau_1, \tau_2 \rangle \cap H^0(K_C\otimes L^{\vee}(-2p)) =\{0\}$, we get 
$$II(Q)( \xi_p \odot \xi_p) = 2 \pi i \mu_2(Q)(p)= 2 \pi i \mu_{1,L}(s_1 \wedge s_2)(p) \mu_{1, K_C(-L)}(\tau_1 \wedge \tau_2)(p) \neq 0,$$
since $p$ is not a ramification point for $|L|$, nor for the pencil $|\langle \tau_1, \tau_2 \rangle |$. 


\end{proof}



\begin{definition}  
\label{maroni}
Let $C$ be a trigonal (non hyperelliptic) curve  of genus $g \geq 5$ and let $L$ be the line bundle of degree 3 computing the unique trigonal series.  The Maroni degree
$k \in {\mathbb N}$ of $C$ can be characterised as the unique number such that
$$h^0(C, L^{\otimes k+1}) = k + 2,  \ h^0(C, L^{\otimes k+2}) > k + 3.$$
\end{definition}
The following bounds on $k$ have been established by Maroni (\cite{maroni})
$$\frac{g - 4}{3} \leq  k \leq \frac{g - 2}{2}.$$

Hence if $g\geq 5$, we can have trigonal curves with  $k=1$ only if $g=5,6,7$. This means that $h^0(L^{\otimes 2}) =3$, and $h^0(L^{\otimes 3}) =5$. For $g=5$, we have $k=1$, while for $g=6,7$ the general curve has $k =2$. We will say that a trigonal curve of genus $g =6,7$ is Maroni special if $k=1$. 

Notice that if $g \geq 8$, we always have  $k \geq 2$.

\begin{theorem}
\label{rank1}
If $C$ is trigonal (non hyperelliptic) of genus of genus $g \geq 8$, or $C$ is trigonal (non hyperelliptic) of genus $g =6,7$ and $k =2$,  then the rank one asymptotic directions are exactly the Schiffer variations in the ramification points of the $g^1_3$. 
\end{theorem}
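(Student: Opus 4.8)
The plan is to derive the statement from Lemma~\ref{lem_schiffer} and Theorem~\ref{cliff=d}, after checking that a trigonal non-hyperelliptic curve with $g\ge 8$, or with $g\in\{6,7\}$ and Maroni degree $k=2$, satisfies $h^0(L^{\otimes 2})=3$ and $h^0(L^{\otimes 3})=4$, where $L$ denotes the (unique, since $g\ge 5$) $g^1_3$. The inclusion ``$\supseteq$'' is immediate: a Schiffer variation $\xi_p$ has rank $1$, and by Lemma~\ref{lem_schiffer}, since $g\ge 6$, it is asymptotic precisely when $p$ is a ramification point of $|L|$.

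For the reverse inclusion I would take a rank-one asymptotic direction $\zeta\in H^1(T_C)$ and argue by cases. If $\zeta$ is a scalar multiple of some Schiffer variation $\xi_p$ --- equivalently, $\zeta$ is a linear combination of Schiffer variations supported on an effective divisor of degree $1$ --- then $\xi_p$ is asymptotic, so Lemma~\ref{lem_schiffer} again forces $p$ to be a ramification point, and we are done. Otherwise I apply Theorem~\ref{cliff=d} with $d=1$: for $C$ trigonal non-hyperelliptic one has $Cliff(C)=1=gon(C)-2$, the bundle $L$ is the unique $g^1_3$ with $Cliff(L)=1$, and $g\ge 6\ge 5$ with $1\ne\frac{g-1}{2}$, so all hypotheses of that theorem hold once we know $h^0(L^{\otimes 2})=3$ and $h^0(L^{\otimes 3})=4$. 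Theorem~\ref{cliff=d} then asserts that $\zeta$ is double-split and \emph{not} asymptotic, contradicting the assumption on $\zeta$. Hence $\zeta$ must be a Schiffer variation at a ramification point.

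It remains to verify the two Hodge-number equalities, which is the only step carrying content of its own. For $h^0(L^{\otimes 2})=3$: the base-point-free pencil trick gives an injection $Sym^2H^0(L)\hookrightarrow H^0(L^{\otimes 2})$, so $h^0(L^{\otimes 2})\ge 3$; on the other hand, $h^0(L^{\otimes 2})=4$ would make $L^{\otimes 2}$ contribute to the Clifford index ($h^0(L^{\otimes 2})\ge 2$, and by Riemann--Roch $h^1(L^{\otimes 2})=g-3\ge 2$ since $g\ge 5$) with $Cliff(L^{\otimes 2})=0$, forcing $C$ hyperelliptic --- a contradiction. For $h^0(L^{\otimes 3})=4$: each $L^{\otimes n}$ is base-point-free of positive degree, so multiplication by a section of $L$ shows $h^0(L^{\otimes n})$ is strictly increasing in $n$, and hence the Maroni characterization $h^0(L^{\otimes(k+1)})=k+2$ of Definition~\ref{maroni} forces $h^0(L^{\otimes n})=n+1$ for $1\le n\le k$; in particular $h^0(L^{\otimes 3})=4$ as soon as $k\ge 2$. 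That $k\ge 2$ holds is the hypothesis when $g\in\{6,7\}$, and for $g\ge 8$ it follows from Maroni's lower bound $k\ge\frac{g-4}{3}>1$. I expect no serious obstacle here: the argument is essentially bookkeeping to line up the hypotheses of Theorem~\ref{cliff=d}, the only slightly delicate point being this translation of ``Maroni degree $\ge 2$'' (automatic for $g\ge 8$) into the numerical condition $h^0(L^{\otimes 3})=4$ that the theorem actually uses.
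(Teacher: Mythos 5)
Your proof is correct and follows essentially the same route as the paper: both reduce the statement to Lemma \ref{lem_schiffer} for the Schiffer case and to Theorem \ref{cliff=d} for the remaining rank-one directions, the only (harmless) difference being that you split on whether $\zeta$ is a multiple of a Schiffer variation while the paper splits on whether $E$ is globally generated (equivalent by Theorem \ref{cliff-rank}). Your verification that Maroni degree $k\ge 2$ (automatic for $g\ge 8$) forces $h^0(L^{\otimes 2})=3$ and $h^0(L^{\otimes 3})=4$ is more detailed than the paper's, which simply asserts these equalities.
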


\begin{proof}
Take $\zeta$ a rank 1 infinitesimal deformation and $E$ the rank 2 vector bundle in the corresponding extension \eqref{extension}. 
If $E$ is not globally generated, then by Lemma \ref{lem_schiffer}, $\zeta = \xi_p$ is a Schiffer at a point $p$, and  it is an asymptotic direction if and only if $p$ is a ramification point of the $g^1_3$.

So assume $E$ is globally generated and take $L$ the $g^1_3$. Then, since $C$ is not hyperelliptic, $h^0(L^{\otimes 2}) =3$ and by assumption $h^0(L^{\otimes 3}) = 4$. So by the proof of Theorem \ref{cliff=d},  $\zeta$ is not asymptotic.

\end{proof}

Let us now assume that $C$ is a smooth plane quintic and take $L = \mathcal O_C(1)$  the $g^2_5$, then $L^{\otimes 2} = K_C$.

\begin{theorem}
\label{quintic} 
On a smooth plane quintic there are no rank one asymptotic directions. 

\end{theorem}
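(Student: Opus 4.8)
\textbf{Proof proposal for Theorem \ref{quintic}.}

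The plan is to mimic the strategy used for trigonal curves in Theorem \ref{rank1}, splitting into the case where the rank two bundle $E$ of the extension \eqref{extension} is globally generated and the case where it is not. Let $\zeta \in H^1(T_C)$ be a rank one deformation on a smooth plane quintic $C$ (so $g=6$), and let $E$ be the associated bundle, with $h^0(E)=g=6$. If $E$ is not globally generated, then by Lemma \ref{lem_schiffer} $\zeta=\xi_p$ is a Schiffer variation at a point $p$; since $C$ is a plane quintic it is \emph{not} trigonal, so by the non-trigonal case of Lemma \ref{lem_schiffer} (or directly by \cite[Theorem 6.1]{cf-Michigan}, which produces a quadric $Q$ with $\mu_2(Q)(p)\neq 0$ at every point) $\xi_p$ is not asymptotic. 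So the whole content is in the globally generated case.

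When $E$ is globally generated, I would run the Segre--Nagata--Ghione argument exactly as in the proof of Theorem \ref{cliff-rank}/Theorem \ref{cliff=d}: there is a sub-line bundle $A\subset E$ with $\deg A\geq \frac{g-1}{2}=\frac{5}{2}$, hence $\deg A\geq 3$; chasing the saturation diagram and the cohomology bounds (using $h^0(E)=g$ and that $E$, hence $K_C\otimes A^\vee$, is globally generated) forces $A$ to contribute to the Clifford index with $Cliff(A)\leq 1$, so $Cliff(A)=1$ and $A$ (or $K_C\otimes A^\vee$) is the $g^2_5$, i.e. $L=\mathcal O_C(1)$. As in Theorem \ref{cliff=d} the vertical sequence then induces a surjection on global sections, so $\zeta$ comes from a class $\epsilon\in H^1(K_C\otimes L^{-2})$ lying in the kernel of the dual of $Sym^2H^0(L)\to H^0(L^{\otimes 2})$. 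Now $L^{\otimes 2}=K_C$ on a plane quintic, and $h^0(L)=3$, $h^0(K_C)=6=\binom{3+1}{2}-0$; the multiplication map $Sym^2H^0(\mathcal O_{\mathbb P^2}(1))\to H^0(\mathcal O_{\mathbb P^2}(2))$ is an isomorphism $\mathbb C^6\to\mathbb C^6$, and since $C$ imposes no conditions here ($H^0(K_C)$ is precisely the restriction of $H^0(\mathcal O_{\mathbb P^2}(2))$ because $h^0(\mathcal O_{\mathbb P^2}(2))=6=h^0(K_C)$), the map $Sym^2H^0(L)\to H^0(L^{\otimes 2})$ is an isomorphism. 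Hence $\epsilon=0$, so $E=(K_C\otimes L^\vee)\oplus L = L\oplus L$ and $\zeta$ is split.

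It remains to show a split rank one $\zeta$ of this shape is not asymptotic. By Proposition \ref{prop_split}, $\zeta=[\debar(\tfrac{\rho_D}{\omega})]$ with $\omega=s\tau$, $s\in H^0(L)$, $\tau\in H^0(K_C\otimes L^\vee)=H^0(L)$, $D=\mathrm{div}(s)$ of degree $3$ (indeed $\deg L=5$... here one checks the bookkeeping: rank $=\deg D-2r=1$ with $h^0(\mathcal O_C(D))=r+1$, and the relevant $D$ is the degree-one piece; I would re-derive the precise $D$ from the split structure). The cleanest route is then the double-split machinery: since $h^0(L^{\otimes 2})=h^0(K_C)=3$ and $h^0(L^{\otimes 3})$ on a plane quintic equals $h^0(\mathcal O_{\mathbb P^2}(3))$ restricted $=10-$(conditions), and since $N:=K_C\otimes L^{-2}=\mathcal O_C$ has $h^0(N)=1\neq 0$, the hypotheses of (the proof of) Theorem \ref{cliff=d} or of Proposition \ref{applicazione} apply with the canonical quadric $Q=(s_1^2t)\odot(s_2^2t)-(s_1s_2t)\odot(s_1s_2t)$; Proposition \ref{applicazione} then gives $II(Q)(\zeta\odot\zeta)=2\pi i\,\deg(L)\neq 0$, so $\zeta$ is not asymptotic.

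The main obstacle I anticipate is the bookkeeping in the globally generated case: verifying that the degree bound $\deg A\geq 3$ together with $h^0(E)=g=6$ really pins $A(p+q)$ or $A$ down to the $g^2_5$ and nothing else (the numerics are tighter here than in the higher-genus trigonal case, and one must rule out $A$ or its residual being, say, a $g^1_4$ — impossible since $Cliff(C)=1$ forces gonality $3$, not $4$... wait, a plane quintic has gonality $4$, so a $g^1_4$ does exist; one must check $Cliff(g^1_4)=4-2\cdot 2+2=2>1$, so it does not violate anything but also cannot be the bundle $A$ computing $Cliff(A)=1$). Handling precisely which line bundle $L$ with $Cliff(L)=1$ occurs — here necessarily the plane $g^2_5$, with $h^1(L)=h^0(K_C\otimes L^\vee)=h^0(L)=3\geq 2$ so it genuinely contributes — and confirming $h^0(L^{\otimes 2})=3$ and the surjectivity of $Sym^2H^0(L)\to H^0(L^{\otimes 2})$ via $L^{\otimes 2}=K_C$, is where the argument must be executed carefully but should go through without surprises.
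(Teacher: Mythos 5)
Your proposal is correct, but it reaches the conclusion by a genuinely different route from the paper. The paper's proof is short and geometric: it invokes the discussion after Remark \ref{quadrica} (the rank-one quadrics in the image of $\gamma$ correspond to points of the Veronese surface, i.e.\ of ${\mathbb P}^2$), disposes of the points on $C$ as Schiffer variations via Lemma \ref{lem_schiffer}, and for $p\in{\mathbb P}^2\setminus C$ directly writes down the double-split representative $\zeta=[\debar(\rho_D/(s_1s_2))]$ from two lines through $p$, finishing with Proposition \ref{applicazione} exactly as you do. You instead derive the double-split structure intrinsically from the extension bundle $E$, rerunning the Segre--Nagata--Ghione and Clifford-index analysis of Theorems \ref{cliff-rank} and \ref{cliff=d}; this is legitimate but note that Theorem \ref{cliff=d} cannot be applied as a black box here, since its hypothesis $Cliff(C)=gon(C)-2$ computed by a pencil fails for a plane quintic (gonality $4$, Clifford index computed only by the $g^2_5$) --- you correctly flag this, and the argument does go through because the unique Clifford-computing bundle forces $A(p+q)=\mathcal{O}_C(1)$ and the multiplication map $Sym^2H^0(L)\to H^0(L^{\otimes 2})=H^0(K_C)$ is an isomorphism, killing the extension class. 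Your route buys independence from the classical description of the quadrics through the canonical curve of a plane quintic, at the cost of length; the paper's route buys brevity by identifying each non-Schiffer rank-one direction with a point of ${\mathbb P}^2\setminus C$ at the outset. One bookkeeping slip to fix: in the split representative $\omega=s\tau$ with $s,\tau\in H^0(L)$, the divisor $D=\mathrm{div}(s)$ has degree $5=\deg L$ (not $3$), with $h^0(\mathcal{O}_C(D))=3$, so $r=2$ and $\mathrm{rank}(\zeta)=5-4=1$ as required; with $t=1\in H^0(K_C\otimes L^{-2})=H^0(\mathcal{O}_C)$ the deformation is indeed double-split and Proposition \ref{applicazione} gives $II(Q)(\zeta\odot\zeta)=2\pi i\deg(L)=10\pi i\neq 0$.
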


\begin{proof}
By the discussion following Remark \ref{quadrica},  the deformations of rank one correspond to the points of ${\mathbb P}^2$, hence they are the intersections of two lines. 
Since  the Schiffer variations are not asymptotic directions (see Lemma \ref{lem_schiffer}), we consider only rank one deformations corresponding to points $ p \in {\mathbb P}^2 \setminus C$.

Choose two lines $l_1, l_2$ in ${\mathbb P}^2$ passing through $p$ that intersect $C$ transversally. Denote by $s_1, s_2$ the corresponding sections in $H^0(L)$. 

Since $L^{\otimes 2} = K_C$, we have $H^0(K_C \otimes L^{- 2}) = H^0({\mathcal O}_C)$, and we choose $t = 1 \in  H^0({\mathcal O}_C)$. 
Choose  $\omega = s_1s_2$, set $D = div(s_1)$.  Then the element $\zeta = [ \debar (\frac{\rho_D}{\omega})]$ is double-split of  rank one, by Proposition \ref{prop_split}. Hence by  Proposition \ref{applicazione}, we have $\rho(Q) (\zeta \odot \zeta) =  (2 \pi i) deg(L) =  10 \pi i,$
so $\zeta$ is not asymptotic. 

\end{proof}
\section{Rank 2 deformations}

In this section we will study rank 2 deformations and  the condition to be asymptotic. 
Let $\zeta \in H^1(T_C)$ be a rank 2 deformation. If $Cliff(C) >2$, by Theorem \ref{cliff>d}, we know that $\zeta$ is not asymptotic. 

So assume $Cliff(C) =2$, hence either $C$ is tetragonal, or it is a smooth plane sextic. 

\begin{theorem}
\label{rank2}
Assume $C$ is a tetragonal curve of genus at least 16 and not a double cover of a curve of genus 1 or 2.  If a deformation $\zeta$ of rank 2  is not a linear combination of Schiffer variations supported on a degree 2 effective divisor, then $\zeta$ is not asymptotic.  
\end{theorem}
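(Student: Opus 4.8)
The plan is to follow the strategy already established for rank one deformations in Theorem \ref{rank1}, adapting it to the tetragonal setting. Take a rank $2$ deformation $\zeta \in H^1(T_C)$ and the corresponding extension $0 \to \mathcal{O}_C \to E \to K_C \to 0$; since $\zeta$ has rank $2$, one gets $h^0(E) = g-1$. By hypothesis $\zeta$ is not a linear combination of Schiffer variations supported on a degree $2$ divisor, so by Theorem \ref{cliff-rank}(2) — applied with $d = 2 = Cliff(C) < \frac{g-1}{2}$, which holds since $g \geq 16$ — the bundle $E$ must be globally generated. This is the input that lets us invoke Theorem \ref{cliff=d}.

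The core of the argument is then to verify the numerical hypotheses of Theorem \ref{cliff=d} for a tetragonal curve. First I would check that $Cliff(C) = Cliff(L) = gon(C) - 2$ where $L$ is a $g^1_4$: this is exactly the statement that the Clifford index is computed by the gonality pencil, which is automatic for $gon(C) = 4$ and $Cliff(C) = 2$. Next, the key point is to show that for \emph{every} $g^1_4$ one has $h^0(L^{\otimes 2}) = 3$ and $h^0(L^{\otimes 3}) = 4$. The bound $h^0(L^{\otimes 2}) \geq 3$ is automatic; equality $h^0(L^{\otimes 2}) = 3$ can fail only if $|L^{\otimes 2}|$ is a $g^3_8$ (or larger), and one rules this out using the hypothesis that $C$ is not a double cover of a curve of low genus together with a Castelnuovo-type / Clifford-type estimate — here the assumption $g \geq 16$ and ``not a double cover of a curve of genus $1$ or $2$'' is precisely what forces $h^0(L^{\otimes 2}) = 3$ and $h^0(L^{\otimes 3}) = 4$ (if $h^0(L^{\otimes 3})$ jumped, the map given by $|L^{\otimes 3}|$ would have small degree image, forcing $C$ to be a double cover or have small genus). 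Once these are in place, Theorem \ref{cliff=d} gives directly that $\zeta$ is double-split, and Proposition \ref{applicazione} shows $II(Q)(\zeta \odot \zeta) = 2\pi i \, \deg(L) = 8 \pi i \neq 0$ for a suitable $Q \in I_2$, so $\zeta$ is not asymptotic.

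The main obstacle I anticipate is the brevity-versus-rigour tension in the $h^0(L^{\otimes 2}) = 3$ and $h^0(L^{\otimes 3}) = 4$ step: one must argue uniformly over all $g^1_4$'s on $C$ and carefully exclude the exceptional cases (double covers of elliptic or genus $2$ curves, and low genus) where $L^{\otimes 2}$ or $L^{\otimes 3}$ could be special. This is where the precise numerical hypotheses ($g \geq 16$, not bielliptic or a double cover of a genus $2$ curve) get used, via a case analysis on the possible line bundles contributing to the Clifford index and on the geometry of the map $\varphi_{L^{\otimes 2}}$ or $\varphi_{L^{\otimes 3}}$: if $h^0(L^{\otimes 2})$ is too large, the associated $\mathbb{P}^1 \times \mathbb{P}^1$ or scroll geometry forces a low-degree pencil or a double cover structure. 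I would isolate this as a preliminary lemma and then feed it into Theorem \ref{cliff=d} to finish.
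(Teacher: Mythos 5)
Your proposal is correct and follows essentially the same route as the paper: deduce global generation of $E$ from Theorem \ref{cliff-rank} (the paper's Remark \ref{bau}), verify $h^0(L^{\otimes 2})=3$ and $h^0(L^{\otimes 3})=4$ for every $g^1_4$ via the Castelnuovo bound (which is exactly where $g\geq 16$ enters, since degree $8$ curves in ${\mathbb P}^3$ have genus at most $9$ and degree $12$ curves in ${\mathbb P}^4$ have genus at most $15$) together with the exclusion of double covers of genus $1$ or $2$ curves, and then conclude by Theorem \ref{cliff=d} and Proposition \ref{applicazione}. The case analysis you defer to a preliminary lemma is precisely the one the paper carries out (birational, degree $2$, degree $3$, degree $4$ possibilities for the maps given by $|L^{\otimes 2}|$ and $|L^{\otimes 3}|$).
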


\begin{proof}
Under our assumptions,  the rank 2 bundle $E$ in the extension \eqref{extension} associated with the rank 2 deformation $\zeta$  is globally generated (see Remark \ref{bau}). 
We claim that  for every $g^1_4$, $L$,  we have $h^0(L^{\otimes 2}) =3$, $h^0(L^{\otimes 3}) =4$, hence we conclude applying the proof of Theorem \ref{cliff=d}.

In fact, if $h^0(L^{\otimes 2}) \geq 4$, $C$ has a $g^3_8$. This can't give a birational map, by the Castelnuovo bound. The map cannot have degree $4$ otherwise the image would be a conic, hence contained in a plane. So  the map has degree 2 and the image has degree 4, so it is either rational or a genus 1 curve. Since it is not hyperelliptic, nor bielliptic by assumption, we have $h^0(L^{\otimes 2}) =3$. 

If $h^0(L^{\otimes 3})\geq 5$, $C$ has a $g^4_{12}$. Again the induced map can't be birational by  the Castelnuovo bound. If it has degree 3, since it is non degenerate, the image is a rational normal curve of degree 4, hence the curve $C$ is trigonal, which contradicts our assumption on $Cliff(C)$. If the map has degree 2, the image is non degenerate of degree 6, hence  by Clifford's theorem it is non special, hence  by Riemann Roch it is a curve  of genus 1 or 2, a  contradiction. 



\end{proof}

We will now consider the case where $\zeta$ is a linear combination of two Schiffer variations. We have the following 
\begin{theorem}
\label{bielliptic}
On any bielliptic curve of genus at least 5 there exist linear combinations of two Schiffer variations that are asymptotic  of rank 2. 

\end{theorem}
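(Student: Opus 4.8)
The strategy is to produce explicit asymptotic directions of the form $\zeta_\pm = \xi_p \pm i\,\xi_{\sigma(p)}$ where $\sigma$ is the bielliptic involution, and to evaluate $II(Q)(\zeta_\pm \odot \zeta_\pm)$ on all quadrics $Q \in I_2$ using the residue formula of Proposition \ref{w}. First I would set up the bielliptic structure: let $\pi\colon C \to E$ be the degree $2$ map to an elliptic curve $E$, with involution $\sigma$. The key tool is the description (from \cite[Theorem 3.7]{cfg}, \cite[Theorem 3.1]{cpt}) of the second fundamental form via the meromorphic form $\hat\eta \in H^0(K_{C\times C}(2\Delta))$ on $C\times C$: one has, for Schiffer variations, $II(Q)(\xi_p\odot\xi_q)$ expressed through evaluation/residues of $\hat\eta$ paired with $Q$. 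On a bielliptic curve the anti-invariant part $H^0(K_C)^- $ of holomorphic forms and the structure of $I_2$ decompose under $\sigma$, and $\hat\eta$ is $\sigma\times\sigma$-equivariant; its zero locus on the diagonal-type locus contains points $(p,\sigma(p))$, which is exactly where the combination $\xi_p \pm i\xi_{\sigma(p)}$ will be annihilated.

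\textbf{Key steps, in order.} (1) Recall the bilinear extension of \eqref{xip}: for distinct points $p,q$, $II(Q)(\xi_p\odot\xi_q)$ is computed by a residue of $Q$-contracted meromorphic data along $p$ and $q$; in particular $II(Q)(\xi_p\odot\xi_q) = -2\pi i\,\langle Q, \hat\eta\rangle(p,q)$ for an appropriate pairing, so that $II(Q)(\zeta_\pm\odot\zeta_\pm) = II(Q)(\xi_p\odot\xi_p) \pm 2i\,II(Q)(\xi_p\odot\xi_{\sigma(p)}) - II(Q)(\xi_{\sigma(p)}\odot\xi_{\sigma(p)})$. (2) Use the $\sigma$-equivariance: since $\sigma^* Q$ runs over $I_2$ as $Q$ does and $\hat\eta$ is equivariant, the "diagonal" terms $II(Q)(\xi_p\odot\xi_p)$ and $II(\sigma^*Q)(\xi_{\sigma(p)}\odot\xi_{\sigma(p)})$ match up; combined with the $\pm i$ coefficients (chosen precisely so the cross term cancels the difference of squares), the whole expression reduces to the vanishing of $\hat\eta$ at $(p,\sigma(p))$. (3) Show that $\hat\eta$ does vanish at some $(p,\sigma(p))$: $\hat\eta \in H^0(K_{C\times C}(2\Delta))$ restricted to the curve $\{(p,\sigma(p)) : p \in C\} \subset C\times C$ (which is isomorphic to $C$ and disjoint from $\Delta$ since $C$ is not hyperelliptic, so $\sigma$ is fixed-point-free on the relevant locus — actually $\sigma$ has fixed points, but they are the ramification points; away from them the graph of $\sigma$ avoids $\Delta$) gives a nonzero section of a positive-degree line bundle on $C$, hence it has zeros; any zero $p_0$ gives the desired $(p_0,\sigma(p_0))$. (4) Check $\mathrm{rank}(\zeta_\pm) = 2$: since $\xi_p$ and $\xi_{\sigma(p)}$ have rank $1$ and are supported on the effective divisor $p+\sigma(p)$, by the discussion after the definition of Schiffer variations $\mathrm{rank}(\zeta_\pm)\le 2$, and it is exactly $2$ unless the combination degenerates, which one rules out because $\ker(\cup\xi_p) \neq \ker(\cup\xi_{\sigma(p)})$ for $p \neq \sigma(p)$ (as the bicanonical/Schiffer points are distinct for distinct points on a non-hyperelliptic curve), so $\cup\zeta_\pm$ genuinely involves two independent rank-one pieces; alternatively use Proposition \ref{prop_split} with $D = p+\sigma(p)$, $h^0(\mathcal O_C(D)) = 1$ (as $C$ is not hyperelliptic), giving rank $2$.

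\textbf{Main obstacle.} The delicate point is Step (2)–(3): precisely identifying why the choice of scalar $\pm i$ (rather than any other constant) forces cancellation, and verifying that the combination $II(Q)(\xi_p\odot\xi_p) - II(Q)(\xi_{\sigma(p)}\odot\xi_{\sigma(p)}) \mp 2i\,II(Q)(\xi_p\odot\xi_{\sigma(p)})$ is, up to a nonzero factor, the evaluation of $Q\cdot\hat\eta$ at the single point $(p,\sigma(p))$ of the graph of $\sigma$ — this requires carefully unwinding the $\hat\eta$-description of $II$ on off-diagonal Schiffer pairs and using that, in a local coordinate $z$ at $p$ with $\sigma^*z = -z$ (valid since near a non-fixed point we can symmetrize, but more relevantly near the structure coming from $E$), the factor $i$ appears as $\sqrt{-1}$ from the relation between the coordinate at $p$ and at $\sigma(p)$ under the degree-$2$ map. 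Concretely I expect to need: a local coordinate $w$ on $E$, pull back so that near $p$, $z^2 = w$ roughly, and the Schiffer variation transforms with a sign/square-root; the residue computation $\mathrm{Res}$ then produces the factor forcing $\pm i$. I would also need to confirm that the residue formula (Proposition \ref{w}) or its bilinear analogue applies, i.e.\ that for a suitable choice of $\omega$ with $p + \sigma(p)$ in its divisor the hypothesis "$\rho g_{2i-1}$ is $C^\infty$" can be met for a spanning set of quadrics — equivalently, reduce everything to the already-established formula \eqref{xip} for $\mu_2$ plus its polarization, which is the cleanest route and sidesteps re-deriving the $\hat\eta$ picture from scratch.
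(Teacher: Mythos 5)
Your candidate directions $\xi_p\pm i\,\xi_{\sigma(p)}$ with $(p,\sigma(p))$ in $Z(\hat\eta)\cap\Gamma$ are exactly the ones the paper uses, and your steps (1), (3) and (4) are essentially right: the cross term $II(Q)(\xi_p\odot\xi_{\sigma(p)})$ vanishes because it is a multiple of $Q(p,\sigma(p))\hat\eta(p,\sigma(p))$ and $(p,\sigma(p))$ lies in the zero locus of $\hat\eta$ restricted to the graph $\Gamma$ of $\sigma$ (a positive-degree line bundle on $\Gamma\cong C$), and the rank is $2$. But there is a genuine gap in step (2). After the cross term is disposed of, what remains is
$II(Q)(\xi_p\odot\xi_p)-II(Q)(\xi_{\sigma(p)}\odot\xi_{\sigma(p)})$, and $\sigma$-equivariance of $II$ only gives $II(Q)(\xi_{\sigma(p)}\odot\xi_{\sigma(p)})=II(\sigma^*Q)(\xi_p\odot\xi_p)$. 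To conclude that this difference vanishes \emph{for every individual} $Q\in I_2$ (as the definition of asymptotic requires), you must know that $\sigma^*Q=Q$ for all $Q\in I_2$. This is not automatic: if $Q$ had a nonzero anti-invariant component $Q^-$, then $II(Q^-)(\xi_p\odot\xi_p)-II(Q^-)(\xi_{\sigma(p)}\odot\xi_{\sigma(p)})=2\,II(Q^-)(\xi_p\odot\xi_p)$, which is generally nonzero (indeed $II(Q)(\xi_p\odot\xi_p)=-2\pi i\,\mu_2(Q)(p)$ is nonzero for some $Q$ since $C$ is neither hyperelliptic nor trigonal, by Castelnuovo--Severi). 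The paper supplies the missing fact by a dimension count: $H^0(K_C)=H^0(K_C)^+\oplus H^0(K_C)^-$ with $\dim H^0(K_C)^+=1$, so $\dim\bigl(Sym^2H^0(K_C)\bigr)^-=g-1=\dim H^0(K_C^{\otimes 2})^-$; since the (Noether-surjective) multiplication map is $\sigma$-equivariant, its restriction to the anti-invariant parts is an isomorphism, whence $I_2^-=(0)$ and $I_2=I_2^+$. Your proposal nowhere establishes this, and without it the argument only proves asymptoticity against invariant quadrics.

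A secondary, related confusion: you describe the coefficient $\pm i$ as "chosen so the cross term cancels the difference of squares," and in the "main obstacle" you expect $i$ to emerge from a square-root of a local coordinate under the degree-$2$ map. Neither is what happens. The cross term and the diagonal difference vanish for two independent reasons (the choice of $(p,\sigma(p))\in Z(\hat\eta)$, and the $\sigma$-invariance of $I_2$ respectively); once both are in place, $II(Q)(a\xi_p+b\xi_{\sigma(p)})^{\odot 2}=(a^2+b^2)\,II(Q)(\xi_p\odot\xi_p)$, and $\pm i$ is simply the solution of $a^2+b^2=0$. No local uniformizer or residue computation at $p$ is needed for this step, and pursuing one would not close the gap above.
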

\begin{proof}
Assume  the curve $C$ is bielliptic of genus at least 5, and let $\pi: C \to E$ be the $2:1$ map to a genus 1 curve $E$. By the Castelnuovo-Severi   inequality  (see e.g. \cite[Theorem 3.5]{accola}), a bielliptic curve of genus at least 5 is not hyperelliptic, nor trigonal.

 Denote by $\sigma$ the bielliptic involution. Consider the curve $\Gamma$ in  the surface $S =C\times C$ given by the graph of $\sigma$, $\Gamma : = \{ (p, \sigma(p)) \ | \ p \in C\}$. 
Consider the form $\hat{\eta} \in H^0(K_S(2 \Delta))$ introduced in \cite{cfg}. 
Denote by $Z(\hat{\eta})$ its zero locus. Clearly $Z(\hat{\eta})$ intersects the curve $\Gamma$ outside the diagonal. Take a point $(p, \sigma(p)) \in Z(\hat{\eta}) \cap \Gamma$, $p \neq \sigma(p)$. 
Thus,  for any quadric $Q \in I_2$, by  \cite{cfg} we have 
$$II(Q)(\xi_p \odot \xi_{\sigma(p)}) = - 4 \pi i Q(p, \sigma(p)) \hat{\eta}(p, \sigma(p)) = 0.$$
Take $\zeta = a \xi_p + b \xi_{\sigma(p)}$, then by  \cite{cfg} we have 
$$II(Q)(\zeta \odot \zeta) = a^2 II(Q)(\xi_p \odot \xi_p) + b^2 II(Q)(\xi_{\sigma(p)} \odot \xi_{\sigma(p)}). $$
Notice that any quadric $Q \in I_2$ is $\sigma$-invariant. In fact, $H^0(C, K_C) \cong H^0(E, K_E) \oplus H^0(C, K_C)^-$, where $H^0(C, K_C)^-$ denotes the anti-invariant subspace by the action of $\sigma$ and $H^0(E, K_E) \cong H^0(C, K_C)^+ $ is the invariant subspace. 
Hence  $\operatorname{dim} \big( Sym^2 H^0(C,K_C) \big)^- =g-1$. Moreover  we have $\operatorname{dim}H^0(C,K_C^{\otimes 2})^+= 2g-2$, thus $\operatorname{dim}H^0(C,K_C^{\otimes 2})^- =  g-1$. Since the multiplication map is $\sigma$-equivariant,  $I_2(K_C)^- = (0)$, so  $I_2(K_C) = I_2(K_C)^+$.
Then we have 
$$II(Q)(\zeta \odot \zeta) =  (a^2 + b^2)  II(Q)(\xi_p \odot \xi_p),$$
since $Q $ is $\sigma$-invariant, $II$ is $\sigma$-equivariant and $\sigma_*(\xi_p) = \xi_{\sigma(p)}$. 
Hence, if $a^2 +b^2 = 0$, $\zeta$ is asymptotic. On the other hand, notice that if $a^2 + b^2 \neq 0$, there exists a quadric $Q$ such that $II(Q)(\xi_p \odot \xi_p) \neq 0$, so $\zeta$ is not asymptotic (see Lemma \ref{lem_schiffer}).

\end{proof}

\begin{remark}
The asymptotic directions found in Theorem \ref{bielliptic} are linear combinations of Schiffer variations supported on a divisor $D = 2p + 2q$ of a $g^1_4$ on the bielliptic curve $C$.

\end{remark} 

\begin{proof}
For any point $(p, \sigma(p))$ of the curve $\Gamma$ in the proof of Theorem \ref{bielliptic}, the line bundle ${\mathcal O}_C(2p + 2 \sigma(p))$ is a $g^1_4$ on $C$. In fact, for $p \in C$, denote by $y = \pi(p) = \pi(\sigma(p)) \in E$, and take a $2:1$ cover $\phi: E \to {\mathbb P}^1$ such that $y$ is a ramification point of $\phi$. Then $\phi(y)$ is a critical value for the degree $4$ map $\psi:= \phi \circ \pi$, such that $\psi^*(\phi(y)) = 2p + 2 \sigma(p)$. 
\end{proof}
\
\

Assume now that $C$ is a smooth plane sextic. 
We will describe the rank 2 deformations on $C$. 
Set $V: = H^0({\mathcal O}_{{\mathbb P}^2}(3) ) \cong H^0(C, K_C)$ and consider the Grassmannian $G(8,V)$ of linear subspaces of $V$ of codimension 2. Denote by $Y_k = \{[\zeta] \in {\mathbb P}H^1(T_C) \ | \ rank(\zeta) \leq k\}$ and consider the map 
$$\chi: Y_2 \setminus Y_1 \to G(8,V), $$
$$[\zeta] \mapsto ker(\zeta).$$
Denote by $Sec(C) \subset Y_2$ the linear combination of Schiffer variations at two points in $C$. 

Let $Z \in Hilb^2({\mathbb P}^2)$ be a length 2 scheme and denote by ${\mathcal I}_Z$ the ideal sheaf of $Z$. Then $W_Z := H^0({\mathcal I}_Z(3)) \subset V$ has codimension 2. If $Z = p+q$, $p \neq q$, then $W_Z$ is the space of cubics passing through $p$ and $q$, while if $Z=(p, v)$, where $v \neq 0$ is a tangent vector at $p$, the elements in $W_Z$ are the cubics passing through $p$ and tangent at $v$. Consider  the injective map 
$$\gamma:  Hilb^2({\mathbb P}^2) \to G(8,V), \ Z \mapsto W_Z.$$ Denote by $Hilb^2({\mathbb P}^2)(C)$ the divisor of schemes having support intersecting $C$ and $U_C := Hilb^2({\mathbb P}^2) \setminus Hilb^2({\mathbb P}^2)(C)$. We have the following 

\begin{proposition}
\label{sextic}
We have 
\begin{enumerate}
\item $\chi(Y_2 \setminus Y_1) \subset \gamma(Hilb^2({\mathbb P}^2))$,
\item $\chi$ induces a bijection between $Y_2 \setminus Sec(C) $ and $U_C$. 
\end{enumerate}

\end{proposition}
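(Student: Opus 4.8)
The plan is to run the Segre--Nagata--Ghione analysis of Theorems \ref{cliff-rank} and \ref{cliff=d} for rank-$2$ deformations on $C$, using the special geometry of a plane sextic: here $g=10$, $K_C=\mathcal O_C(3)$, $H^0(K_C)=V=H^0(\mathcal O_{\mathbb P^2}(3))$, and $\mathcal O_{\mathbb P^2}(3)$ is $2$-very ample, so for every $Z\in Hilb^2(\mathbb P^2)$ one has $\dim W_Z=8$, the base locus of $|\mathcal I_Z(3)|$ is exactly $Z$, and $\gamma$ is injective into $G(8,V)$. Fix $[\zeta]\in Y_2\setminus Y_1$, so $\operatorname{rank}\zeta=2$ and $\ker(\cup\zeta)=\operatorname{Im}(H^0(E)\to H^0(K_C))$ has dimension $8$, where $E$ is as in \eqref{extension} with $h^0(E)=9$. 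I would take a saturated sub-line bundle $A\subset E$ with $\deg A\ge (g-1)/2$, hence $\deg A\ge 5$, put $L'=K_C\otimes A^\vee$ (both $A\to K_C$ and $\mathcal O_C\to L'$ nonzero), and note that a Riemann--Roch count forces $h^0(A)\ge 2$.

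For (1), split into two cases. If $h^0(L')=1$, then $H^0(E)\to H^0(L')$ is onto, so $\deg A=16$ and $A=K_C(-D)$ for an effective degree-$2$ divisor $D$ with $h^0(\mathcal O_C(D))=1$; the upper row of \eqref{hodge1} splits, so $\zeta$ is a combination of Schiffer variations supported on $D$, whence $\zeta\in Sec(C)$ and $\ker(\cup\zeta)=H^0(K_C(-D))$, which (since $H^i(\mathcal O_{\mathbb P^2}(-3))=0$ for $i=0,1$) equals $H^0(\mathcal I_D(3))=W_D=\gamma(D)$ with $D$ supported on $C$. If $h^0(L')\ge 2$, then $A$ contributes to the Clifford index and the standard inequalities give $\operatorname{Cliff}(A)\le 2=\operatorname{Cliff}(C)$, forcing $\operatorname{Cliff}(A)=2$, $2h^0(A)=\deg A$, and $h^0(A)+h^0(L')=h^0(E)$; since a smooth plane sextic is neither hyperelliptic nor bielliptic (Castelnuovo--Severi, applied to the pencils cut by lines through a point of $C$) it has no $g^3_8$, so by the classification of Clifford-index--computing bundles on plane curves (\cite{martens}) $A\in\{\mathcal O_C(1),\mathcal O_C(2)\}$. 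The multiplication $H^0(L')\otimes H^0(L')\to H^0(L'^{\otimes 2})$, coming from multiplication of forms on $\mathbb P^2$, is surjective; this is dual to injectivity of $\operatorname{Ext}^1(L',A)\to\operatorname{Hom}(H^0(L'),H^1(A))$, and since $H^0(E)\to H^0(L')$ is onto the extension class vanishes, so $E=\mathcal O_C(1)\oplus\mathcal O_C(2)$ and $\zeta$ is split with $L=\mathcal O_C(1)$. By Proposition \ref{prop_split}, $\omega=s\tau$ with $s$ a line $\ell$, $\tau$ a conic $q$, $D=\ell\cap C$, and $\ker(\cup\zeta)=\ell\cdot H^0(\mathcal O_C(2))+q\cdot H^0(\mathcal O_C(1))=\{\text{cubics divisible by }\ell\}+\{\text{cubics divisible by }q\}\subset V$; this lies in $W_Z$ for $Z:=\ell\cap q$, and both spaces have dimension $8$ (using $\ell\nmid q$, forced by $\operatorname{div}(s)\cap\operatorname{div}(\tau)=\emptyset$), so $\ker(\cup\zeta)=W_Z=\gamma(Z)$, with $Z\in U_C$ since $\ell\cap q\cap C=\emptyset$. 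This proves (1), and shows that on $Y_2\setminus Y_1$ the two cases are exactly ``$\zeta\in Sec(C)$'' and ``$\zeta\notin Sec(C)$ with $\chi(\zeta)\in\gamma(U_C)$''.

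For (2), note that for a plane sextic the rank-$1$ deformations are the Schiffer variations $\xi_p$ (\cite{griffiths}), so $Y_1\subset Sec(C)$, hence $Y_2\setminus Sec(C)\subset Y_2\setminus Y_1$ and $\chi(Y_2\setminus Sec(C))\subset\gamma(U_C)$ by the dichotomy. For surjectivity onto $\gamma(U_C)$: given $Z\in U_C$, take $\ell$ the line through $Z$ and $q$ a conic through $Z$ not containing $\ell$; since $Z\cap C=\emptyset$ the divisors $\ell\cap C$ and $q\cap C$ are disjoint, so $\zeta=[\bar\partial(\rho_D/(\ell q))]$ with $D=\ell\cap C$ is split with $L=\mathcal O_C(1)$, of rank $6-2(h^0(L)-1)=2$, and $\chi(\zeta)=W_Z$; it is not in $Sec(C)$ because a rank-$2$ element of $Sec(C)$ has kernel $W_{D'}$ with $D'\subset C$ (the first case above) and $\gamma$ is injective. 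For injectivity of $\chi$ on $Y_2\setminus Sec(C)$: if $\ker(\cup\zeta)=W_Z$, then $\cup\zeta=\ell_1\odot\ell_2$ with $\ell_1,\ell_2$ in the $2$-dimensional annihilator $W_Z^\perp\subset H^1(\mathcal O_C)$, so $\cup\zeta\in\operatorname{Im}\gamma\cap S^2W_Z^\perp=\ker\bigl(S^2W_Z^\perp\to I_2^\vee\bigr)$, of dimension $3-\dim\{Q|_{W_Z^\perp}:Q\in I_2\}$. This dimension is $\ge 1$ by the construction, and cannot be $\ge 2$: otherwise $\mathbb P(\operatorname{Im}\gamma\cap S^2W_Z^\perp)$ is a positive-dimensional linear subspace of $\mathbb P(S^2W_Z^\perp)=\mathbb P^2$, hence meets the discriminant conic and contains a perfect square $\ell^2$, i.e. a rank-$1$ deformation with $W_Z\subseteq\ker$; then $W_Z\subseteq\ker(\cup\xi_p)=W_{\{p\}}$ for some $p\in C$, forcing $p\in Z\subset\mathbb P^2\setminus C$, a contradiction. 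Thus $\dim(\operatorname{Im}\gamma\cap S^2W_Z^\perp)=1$, so $[\zeta]$ is determined by $W_Z$ and $\chi$ is injective; combining, $\chi\colon Y_2\setminus Sec(C)\to U_C$ is a bijection.

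The main obstacle is the injectivity in (2): a priori $\zeta$ is not recoverable from $\ker(\cup\zeta)$ (the cup-product map carries more data than its kernel), and the argument rests on the interplay between the apolarity constraint cutting out $\operatorname{Im}\gamma$ and the fact — special to schemes $Z$ disjoint from $C$ — that no rank-$1$ degeneration is compatible with the kernel $W_Z$. The other delicate inputs are the splitting of the Segre--Nagata extension (the duality/multiplication argument) and the identification $A\in\{\mathcal O_C(1),\mathcal O_C(2)\}$, which uses that a smooth plane sextic carries no $g^3_8$.
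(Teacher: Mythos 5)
Your proof is correct, but for part (1) it takes a genuinely different route from the paper. The paper works purely with the homogeneous ideal generated by $W=\ker(\cup\zeta)$: it bounds the codimensions $c_i$ of the images of the multiplication maps $W\otimes H^0(\mathcal O_{\mathbb P^2}(i))\to H^0(\mathcal O_{\mathbb P^2}(3+i))$ using Macaulay's growth estimate and Gotzmann's Persistence Theorem to conclude that the base scheme of $W$ has length exactly $2$; no information about the extension bundle $E$ is used. You instead re-run the Segre--Nagata--Ghione and Clifford-index analysis of Theorems \ref{cliff-rank} and \ref{cliff=d} in the special case $g=10$, $Cliff(C)=2$, and obtain a sharper structural dichotomy: either $\zeta\in Sec(C)$ with kernel $W_D$ for $D\subset C$, or $E\cong\mathcal O_C(1)\oplus\mathcal O_C(2)$, $\zeta$ is (double-)split, and the kernel is $W_{\ell\cap q}$ with $\ell\cap q$ disjoint from $C$. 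This buys you part of (2) for free --- in particular the paper's separate step showing $\beta^{-1}(Hilb^2({\mathbb P}^2)(C))\subset Sec(C)$ (ruling out schemes meeting $C$ in exactly one point) is automatic from your dichotomy --- and it anticipates the explicit line-times-conic description that the paper only establishes later, inside the proof of Theorem \ref{sextic1}. The price is that you must classify the line bundles computing the Clifford index of a smooth plane sextic (no $g^1_4$ since the gonality is $5$, no $g^3_8$ by the Castelnuovo bound together with non-hyperellipticity and non-biellipticity), which is exactly the argument the paper runs in Theorem \ref{rank2}; you should state explicitly that the remaining degrees $10$ and $14$ are excluded by Serre duality from these two cases. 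For part (2) the two proofs essentially coincide: surjectivity comes from the same line--conic construction (you via the Dolbeault representative of Proposition \ref{prop_split}, the paper via the Koszul resolution of $\mathcal I_Z$), and your discriminant-conic argument for injectivity is the same linear algebra as the paper's observation that a pencil of quadrics on the $2$-dimensional space $H^0(K_C)/W_Z$ always contains a degenerate member, which would produce a rank-one deformation whose kernel contains $W_Z$ and hence a point of $Z$ lying on $C$.
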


\begin{proof} 
Take $[\zeta] \in Y_2 \setminus Y_1$, and $W = \chi([\zeta]) = ker(\zeta) \subset V$. Denote by  
$$ev: W \otimes {\mathcal O}_{{\mathbb P}^2}   \to  {\mathcal O}_{{\mathbb P}^2}(3)$$
the evaluation map and tensor it by ${\mathcal O}_{{\mathbb P}^2} (-3)$. Then the cokernel of this map is a sheaf ${\mathcal O}_{Z}$ and we claim that it  has length $2$.

In fact, denote by $m_i: W \otimes H^0({\mathcal O}_{{\mathbb P}^2}(i) )  \to  H^0({\mathcal O}_{{\mathbb P}^2}(3+i))$ the multiplication map and by $c_i:= codim(Im (m_i)), H^0({\mathcal O}_{{\mathbb P}^2}(3+i))$.

Then by a Theorem of Macaulay (see \cite[Theorem 2]{Green_Restriction}), we have $c_1 \leq 2$. Gotzmann's Persistence Theorem (see  \cite{Green_Restriction}) says that if $c_1 =2$, then $c_i =2$ for all $i \geq 1$. From this one easily proves that ${\mathcal O}_{Z}$ has length 2. 

This shows that $\chi([\zeta]) = \gamma(Z)$ and $(1)$ is proven. 
Since $\gamma$ is injective, we can consider the composition 
$$\beta:= \gamma^{-1} \circ \chi: Y_2 \setminus Y_1 \to Hilb^2({\mathbb P}^2).$$

We claim that $\beta^{-1}(Hilb^2({\mathbb P}^2)(C)) \subset Sec(C)$. 

Assume by contradiction that there exists $[\zeta] \not \in Sec(C)$ such that $\beta([\zeta]) \in Hilb^2({\mathbb P}^2)(C)$. Then $\beta([\zeta]) = p+q$, where $p \in C$ and $q \not \in C$ and $\chi([\zeta]) = ker(\zeta) \cong \{ s \in H^0({\mathcal O}_{{\mathbb P}^2}(3)) \ | \ s(p) = s(q) =0\}$. 

The image of $m_3$ is given by the sextics vanishing at $p$ and $q$. So, restricting to $C$, we obtain that the image of the map 
$\mu: W \otimes H^0(C, K_C) \rightarrow H^0(C, K_C^{\otimes 2})$ 
 is $H^0(K_C^{\otimes 2}(-p))$. Hence we get $\zeta = \xi_p \in Y_1$. 

So we can consider the restriction $\beta'$ of $\beta$ to $Y_2 \setminus Sec(C)$. 
This gives a map 
$$\beta' : Y_2 \setminus Sec(C) \to U_C.$$

We want to show that $\beta'$ is bijective. 

Assume $\beta'([\eta]) = \beta'([\zeta]) = Z$. So $W = ker(\eta) = ker(\zeta)$, hence $W \subset ker(a\eta + b \zeta)$, for every linear combination. Note that the rank of $a\eta + b \zeta$ cannot be 1, otherwise $W \subset H^0(K_C(-p))$ for some $p \in C$, and so $p \in Supp(Z)$, a contradiction since $Z \in U_C$. On the other hand, letting $L = H^0(K_C)/W$, then $a \eta + b \zeta$ define symmetric forms on $L$, which has dimension 2, so there exists a linear combination $a \eta + b \zeta$ for which  the rank of the corresponding quadric drops. Hence it is zero and  $[\zeta] = [\eta] \in Y_2$, thus  $\beta'$ is injective. 

To prove that $\beta'$ is surjective, fix $Z \in U_C$ and realise $Z$ as a complete intersection of a line $l$ and a conic $\Gamma$ (tangent if $Z$ is not reduced). Using the equations of $l$ and $\Gamma$ we obtain the exact sequence

$$ 0 \to {\mathcal O}_{{\mathbb P}^2}(-3) \to {\mathcal O}_{{\mathbb P}^2}(-1) \oplus {\mathcal O}_{{\mathbb P}^2}(-2 ) \to {\mathcal I}_Z  \to 0$$
and tensoring by ${\mathcal O}_{{\mathbb P}^2}(3)$ we have 
$$ 0 \to {\mathcal O}_{{\mathbb P}^2} \to {\mathcal O}_{{\mathbb P}^2}(2) \oplus {\mathcal O}_{{\mathbb P}^2}(1 ) \to {\mathcal I}_Z(3)  \to 0.$$
Restricting to $C$ and using the fact that $Z \cap C = \emptyset$, we get the exact sequence
$$ 0 \to {\mathcal O}_{C} \to {\mathcal O}_{C}(2) \oplus {\mathcal O}_{C}(1) \to K_C  \to 0,$$
and its extension class gives a non trivial class $\zeta$ such that $ker (\zeta) = H^0( {\mathcal I}_Z(3)_{|C})$. So $\beta'([\zeta]) = Z$. 
 \end{proof}

\begin{remark}
Notice that from Proposition \ref{sextic} (2), it follows that the restriction of $\chi$ to $Y_2 \setminus Sec(C)$ is injective. Hence for these infinitesimal deformations $\zeta$, $ker(\zeta)$ determines $\zeta$. 

\end{remark}
We show the following

\begin{theorem}
\label{sextic1}
On a smooth plane sextic there are no asymptotic directions of rank 2. 

\end{theorem}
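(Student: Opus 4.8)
The plan is to combine the description of rank $2$ deformations on a plane sextic in Proposition~\ref{sextic} with the residue formula of Proposition~\ref{w} and the double-split computation of Proposition~\ref{applicazione}. Let $\zeta\in H^1(T_C)$ be of rank $2$. A smooth plane sextic has genus $10$, $K_C=\mathcal{O}_C(3)$, Clifford index $2$ and gonality $5$, so neither Theorem~\ref{cliff>d} nor Theorem~\ref{rank2} applies and a direct argument is needed; I would distinguish the cases $[\zeta]\notin Sec(C)$ and $[\zeta]\in Sec(C)$.

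Suppose first $[\zeta]\notin Sec(C)$. By Proposition~\ref{sextic}(2), $\chi([\zeta])=\gamma(Z)$ for a length $2$ scheme $Z$ whose support is disjoint from $C$. The point I would use is that such a $Z$ is always a complete intersection of a line $l$ and a \emph{reducible} conic $\Gamma=l_1\cup l_2$: if $Z=p+q$ is reduced take $l=\overline{pq}$ and lines $l_1\ni p$, $l_2\ni q$ with $l_i\neq l$; if $Z$ is curvilinear with support $\{p\}$ and tangent direction $v$, take $l$ the line through $p$ in direction $v$ and $l_1,l_2$ any two distinct lines through $p$, both $\neq l$. For generic such $l_1,l_2$ the disjointness hypotheses of Proposition~\ref{applicazione} hold. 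Feeding this complete intersection into the construction in the proof of Proposition~\ref{sextic} produces the split extension $0\to\mathcal{O}_C\to\mathcal{O}_C(2)\oplus\mathcal{O}_C(1)\to K_C\to0$, whose class has kernel $W_Z|_C=\ker(\zeta)$; by the injectivity of $\chi$ on $Y_2\setminus Sec(C)$ it coincides with $\zeta$ up to a scalar. With $L=\mathcal{O}_C(1)$ one has $K_C\otimes L^{\vee}=\mathcal{O}_C(2)$ and $K_C\otimes L^{-2}=\mathcal{O}_C(1)=L$, so taking $s_1=f_l|_C$, $s_2=f_{l_1}|_C$, $t=f_{l_2}|_C$ exhibits $\zeta$ as double-split in the sense of Definition~\ref{double_split}; Proposition~\ref{applicazione} then gives $II(Q)(\zeta\odot\zeta)=2\pi i\deg L=12\pi i\neq0$, so $\zeta$ is not asymptotic.

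Suppose now $[\zeta]\in Sec(C)$, so $\zeta$ is a linear combination of Schiffer variations supported on an effective divisor $D$ of degree $2$: $D=p+q$ with $p\neq q$, or $D=2p$. If $\zeta$ is a scalar multiple of a single Schiffer variation $\xi_p$ it is not asymptotic by Lemma~\ref{lem_schiffer}, a plane sextic being neither hyperelliptic nor trigonal. In the remaining cases I would detect $\zeta$ by a quadric coming from the plane structure: choose six lines $l_1,\dots,l_6\subset\mathbb{P}^2$ with $l_1$ the line joining the points of $\mathrm{Supp}(D)$ (the tangent line to $C$ at $p$ when $D=2p$) and $l_2,\dots,l_6$ generic, and set $Q=(l_1l_2l_3)\odot(l_4l_5l_6)-(l_1l_4l_5)\odot(l_2l_3l_6)$. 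The two products of the six linear forms coincide, so the multiplication map $Sym^2H^0(K_C)\to H^0(K_C^{\otimes2})$ kills $Q$, i.e. $Q\in I_2$. Take $\omega=l_1l_2l_3|_C$, which vanishes on $D$; then in the notation of Proposition~\ref{w}, with $s=2$, $g_1=\omega_1/\omega=1$ and $g_3=\omega_3/\omega=l_4l_5/(l_2l_3)$ is holomorphic near $\mathrm{Supp}(D)$ for generic $l_2,\dots,l_5$, so Proposition~\ref{w} applies. Writing $\zeta=[\debar(\rho/\omega)]$ with $\rho$ supported near $\mathrm{Supp}(D)$ and prescribing the vanishing order of $\rho$ at $p$ so as to pick out the required combination of (possibly higher) Schiffer variations, Proposition~\ref{w} expresses $II(Q)(\zeta\odot\zeta)$ as an explicit sum of residues at the points of $\mathrm{Supp}(D)$ of meromorphic differentials built from $l_4,l_5,l_6$; when $D=p+q$ the $(\omega_1,\omega_2)$-term vanishes because $g_1=1$ and the local data are constant, leaving $-w(\zeta,\omega_3,\omega_4)$. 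I would then pick the remaining lines to make this residue nonzero: for $D=p+q$, taking $l_6$ through $q$ and $l_4$ through $p$ kills the residue at $q$ and leaves at $p$ a nonzero multiple of $g_3'(p)\cdot\mathrm{Res}_p(l_6/l_1)$. Hence some choice of lines gives $II(Q)(\zeta\odot\zeta)\neq0$, and $\zeta$ is not asymptotic.

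The step I expect to be the main obstacle is the nonvanishing of the residue in the case $[\zeta]\in Sec(C)$: one must carry out a careful local computation, identify the $\debar$-datum $\rho$ attached to a prescribed combination of Schiffer variations, and handle two subordinate situations — second-order Schiffer variations ($D=2p$), where the $(\omega_1,\omega_2)$-term must be cancelled by an extra constraint on the generic lines, and the special position where the line through $\mathrm{Supp}(D)$ is tangent to $C$, so that $\omega$ acquires a double zero. By comparison, the case $[\zeta]\notin Sec(C)$ reduces cleanly, once $Z$ is realised as a complete intersection of a line and a reducible conic, to the double-split formula of Proposition~\ref{applicazione}.
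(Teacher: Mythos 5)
Your treatment of the case $[\zeta]\notin Sec(C)$ is correct and is exactly the paper's argument: realise $Z$ as the complete intersection of the line $l$ with a reducible conic $l_1\cup l_2$, identify $\zeta$ with the class of the split extension $0\to\mathcal O_C\to\mathcal O_C(2)\oplus\mathcal O_C(1)\to K_C\to 0$ via the injectivity of $\chi$ on $Y_2\setminus Sec(C)$, and apply Proposition~\ref{applicazione} to get $II(Q)(\zeta\odot\zeta)=12\pi i\neq 0$. Your computation for $[\zeta]\in Sec(C)$ with $D=p+q$ reduced and $\overline{pq}$ transverse to $C$ also goes through: with $\omega=l_1l_2l_3$, the $(\omega_1,\omega_2)$-term of Proposition~\ref{w} vanishes because $f_p,f_q$ are constants and $\mathrm{Res}(d(\cdot))=0$, and choosing $l_6$ through $q$ and $l_4$ through $p$ isolates a nonzero multiple of $a^2$, reducing to Lemma~\ref{lem_schiffer} when $a=0$. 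This is a residue-theoretic variant of what the paper does (the paper instead uses $II(Q)(\xi_p\odot\xi_q)=-4\pi i\,Q(p,q)\hat\eta(p,q)$ and $\mu_2(Q)=s_3^2(\mu_{1,\mathcal O_C(1)}(s_1\wedge s_2))^2$ to kill the cross term and the contribution at $q$), and either route is fine here.

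The genuine gap is precisely where you flag it: the degenerate secant configurations are not resolved, and they are not optional --- $Sec(C)$ contains the deformations $a\xi_p+b\xi_p^2$ (supported on $D=2p$) and the combinations $a\xi_p+b\xi_q$ with $\overline{pq}$ tangent or bitangent, and these are rank-$2$ directions that must be excluded. For $D=2p$ your quadric $Q=(l_1l_2l_3)\odot(l_4l_5l_6)-(l_1l_4l_5)\odot(l_2l_3l_6)$ is an awkward choice: with $l_1$ tangent at $p$ the datum is $f_p=az+b$, so $w(\zeta,\omega_1,\omega_2)=2\pi i\,\mathrm{Res}_p(f_p\,d(f_pg_2))=-2\pi i\,\mathrm{Res}_p(f_pg_2\,df_p)=-2\pi i\,a(ac_{-2}+bc_{-1})$, where $c_{-2},c_{-1}$ are Laurent coefficients of $g_2=l_4l_5l_6/(l_1l_2l_3)$; this does not vanish automatically, and tuning $l_4,l_5,l_6$ to kill it simultaneously changes the $(\omega_3,\omega_4)$-residue you need to be nonzero, so ``some choice of lines works'' is not established. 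The paper avoids this entirely by taking a quadric of double-split shape $s_1^2s_3\odot s_2^2s_3-(s_1s_2s_3)^{\odot 2}$ with $s_1$ the tangent line at $p$ and $s_2,s_3$ not through $p$: then $\omega_3=\omega_4=\omega$, so $g_3=g_4=1$ and that term contributes $\mathrm{Res}_p(f\,df)=0$ identically, while the surviving residue equals $-2b^2$ (up to a nonzero constant), forcing $b=0$ and reducing to a single Schiffer variation. The bitangent case is likewise closed in the paper by switching to sections through $q$ only and evaluating via $\mu_2$. To complete your proof you should either import these two computations or redo them with a quadric of the form $\omega_1\odot\omega_2-\omega^{\odot 2}$, for which the second residue term is identically zero.
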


\begin{proof}
By Proposition \ref{sextic} we have shown that a rank 2 deformation $\zeta$ that is not a linear combination of Schiffer variations corresponds to a point $Z \in Hilb^2({\mathbb P}^2)$ whose support does not intersect $C$. We have $ker(\zeta) \cong H^0({\mathcal O}_{{\mathbb P}^2}(3) \otimes {\mathcal I}_Z)$ and $\zeta$ is determined by its kernel. 

The deformation $\zeta$ corresponds to such a length 2 scheme $Z$, that is the intersection of a line $l_1$ and a conic $\Gamma$, which are tangent in a point if $Z$ is not reduced. So $ker(\zeta) \cong  H^0({\mathcal O}_{{\mathbb P}^2}(3) \otimes {\mathcal I}_Z) = l \cdot H^0({\mathcal O}_{{\mathbb P}^2}(2)) + \Gamma \cdot H^0({\mathcal O}_{{\mathbb P}^2}(1))$. The scheme  $Z$ is either supported in two distinct  points $p$, $q$, or it is given by $p$ and a tangent direction $v$. So the line $l_1$ is uniquely determined by $Z$, while we can choose $\Gamma$ as the union of two lines $l_2, l_3$ passing through $p$ and $q$, or passing through $p$ if $Z$ is not reduced. 
Denote by $l_{i{|C}} =: s_i \in H^0({\mathcal O}_{C}(1))$, set $\omega = s_1s_2s_3 \in H^0(K_C)$ and $D: = div(s_1)$. Then a Dolbeault representative for $\zeta$ is $ \overline{\partial}(\frac{\rho_D}{\omega})$, where $\rho_D$ is as in Proposition \ref{prop_split}. This can be easily seen, observing that $ ker ([\overline{\partial}(\frac{\rho_D}{\omega})])$ is equal to $ker(\zeta)$. In fact, $ ker ([\overline{\partial}(\frac{\rho_D}{\omega})])$ is equal to $s_1 \cdot H^0({\mathcal O}_C(2)) + s_1 s_2 \cdot H^0({\mathcal O}_C(1))$, as it is easily seen using Proposition  \ref{prop_split}.

 So $\zeta$ is double split, hence it is not asymptotic by Proposition \ref{applicazione}. 

Assume $\zeta$ is a linear combination of two Schiffer variations, $\zeta = a \xi_p + b \xi_q$, with $p, q$ two distinct points in $C$ and not lying on a bitangent of $C$. 

Consider $s_1 \in H^0({\mathcal O}_C(1)(-p-q))$, $s_2 \in  H^0({\mathcal O}_C(1))$ such that $\langle s_1, s_2 \rangle$ is base point free and such that $p$  is not a ramification point of this pencil. Let $s_3 \in H^0({\mathcal O}_C(1))$, with $s_3(q) =0$, $s_3(p) \neq 0$. Set $Q = s_1^2 s_3 \odot s_2^2 s_3 - s_1 s_2s_3 \odot s_1 s_2s_3 \in I_2(K_C)$. Then by \cite[Lemma 2.2]{cf-Michigan} we have 
$$\mu_2(Q) = s_3^2 (\mu_{1, {\mathcal O}_C(1)}(s_1 \wedge s_2))^2 \in H^0(K_C^{\otimes 4}) ,$$
so $\mu_2(Q)(p) \neq 0$, while $\mu_2(Q)(q) =0$ and $Q(p,q) =0$. So by \cite[Thm. 2.2]{cfg}, we have  $II(Q)(\zeta \odot \zeta) =  \pi i a^2 \mu_2(Q)(p) =0$ if and only if $a=0$ and $\zeta$ is a Schiffer variation at $q$, so it is not asymptotic. 

If the line through $p$ and $q$ is bitangent, take $s_2$ the section of $H^0({\mathcal O}_C(1))$ given by a line passing through $p$ and not through $q$, 
$s_3, s_4$ the sections  given by distinct  lines passing through $q$ and not through $p$. Then the quadric $Q = s_2^2 s_4\odot s_3^2 s_4 - s_2 s_3s_4 \odot s_2 s_3s_4 \in I_2(K_C)$, is such that $Q(p,q) =0$, since $s_2s_4(p) = s_2s_4(q) =0$. By  \cite[Lemma 2.2]{cf-Michigan}, we have 
$$\mu_2(Q) = s_4^2 (\mu_{1, {\mathcal O}_C(1)}(s_2 \wedge s_3))^2 \in H^0(K_C^{\otimes 4}) .$$
Hence $\mu_2(Q)(p) \neq 0$, while $\mu_2(Q)(q) =0$. So by \cite[Thm. 2.2]{cfg}, we have $II(Q)(\zeta\odot \zeta) =  -2 \pi i a^2 \mu_2(Q)(p) =0$ if and only if $a=0$ and $\zeta$ is a Schiffer variation at $q$, so it is not asymptotic. 

If $p =q$, $\zeta = a \xi_p + b \xi_p^2$, take the tangent line of $C$ at $p$ and denote by $s_1$ the given section of $H^0({\mathcal O}_C(1))$. Take $s_2$, $s_3$ two other sections of $H^0({\mathcal O}_C(1))$ given by two  lines not passing through $p$. Then consider the quadric $Q =   s_1^2 s_3 \odot s_2^2 s_3 - s_1 s_2s_3 \odot s_1 s_2s_3 \in I_2(K_C)$.  Set $\omega = s_1 s_2 s_3 \in H^0(K_C)$, then $\zeta = [\debar(\rho_p (\frac{az +b}{\omega}))]$ and the computation in Proposition \ref{w} gives $II(Q)(\zeta \odot \zeta) = b^2 Res_p(\frac{s_1}{s_2} d (\frac{s_2}{s_1}))= 0$ if and only if $b =0$, hence $\zeta$ is a Schiffer variation at $p$, so it is not asymptotic.

\end{proof}

\section{Maroni special trigonal curves of  genus $6,7$ and trigonal curves of genus $5$}
\label{Maroni}
\subsection{$g=6$ Maroni special.}
Let $C$ be a (non hyperelliptic)  trigonal curve of genus $6$ and Maroni degree $k=1.$ 
We will show that in this case there can exist asymptotic directions that are not Schiffer variations in the ramification points of the $g^1_3$. We will describe these asymptotic directions. Moreover we give a parametrisation of the locus of the trigonal curves of genus 6 with Maroni degree 1 giving an explicit equation. We also  describe  the sublocus of those trigonal curves admitting such asymptotic directions. 

Denote by $L$ the trigonal linear series. Recall that by the definition \ref{maroni} of the Maroni degree, we have $h^0(L^{\otimes 2}) =3$ and $h^0(L^{\otimes 3}) =5$.
Then, $K_C \otimes L^{-3}$ has degree 1 and by Riemann Roch $h^0(K_C \otimes L^{-3}) = 1$, so $K_C=L^{\otimes 3}\otimes {\mathcal O}_C(q)  $ for a point $q \in C$.  Hence 
$M=K \otimes L^{\vee}=L^{\otimes 2}(q)$, by Riemann Roch  $h^0(M)=4$ and the map $\phi: C\to |M|\cong \bP^3$ is an embedding. 
In fact  $\phi(C)$ is a curve of degree $7$ and it is smooth since otherwise $C$ would have a $g^2_5$. We fix a basis $\{s_1,s_2\}$ of $H^0(L)$  where we assume that $s_2(q)=0$.  Consider  the inclusion $L^{\otimes 2}\subset L^{\otimes 2}(q)$, then  we can fix now an ordered  basis of $M$ given by $\{\sigma s^2_1,\sigma s_1s_2,\sigma s ^2_2,t\}$  where $\sigma \in H^0(\mathcal O_C(q))$, $\sigma \neq 0$, and  $t(q)\neq 0.$

Consider the rational functions $$x = \frac{s_1}{s_2},   \ g= \frac{\sigma s_2^2}{t},  \  y = \frac{1}{g}= \frac{ t}{\sigma s_2^2}. $$

\begin{remark}
\label{remmaroni}
Assume there exists a deformation $\zeta$ of rank $1 = Cliff(C) = gon(C)-2$, that is asymptotic and it is not a Schiffer variation. Let $L$ be the $g^1_3$. Then we can assume $\zeta = [ \debar(\frac{\rho_D}{\omega})]$, where $D$ is the zero divisor of $s_1$ and $\omega = s_1t$. 
\end{remark}
\begin{proof}

Since  $\zeta$ is not a Schiffer variation and $h^0(L^{\otimes 2}) =3$,  by Theorem \ref{cliff=d} we know that $\zeta$ is split and by Proposition \ref{prop_split}, so  we can write $\omega = s_1 \tau$, where $\tau \in H^0(K_C\otimes L^{\vee})$ and $s_1 \in H^0(L)$ have disjoint support. Thus $\tau$ does not belong to $\langle \sigma s_1^2, \sigma s_1 s_2\rangle$. Moreover if $\tau = \sigma s_2^2$, then $\zeta$ is double split, so,  by Theorem \ref{cliff=d}, it is not asymptotic. Hence we can take  $\omega = s_1 t$. 
\end{proof}

Assume $D = div(s_1) = p_1 + p_2 + p_3$ is reduced. 

\begin{theorem}
With the above notation, let $\zeta = [ \debar(\frac{\rho_D}{\omega})]$ be a deformation of rank 1 that is not a Schiffer variation. Then $\zeta$ is asymptotic if and only if the following conditions are satisfied: 
\begin{equation} 
\label{g}
g(p_1)+g(p_2)+g(p_3)=0
\end{equation} 
\begin{equation} 
\label{g^2}
g^2(p_1)+g^2(p_2)+g^2(p_3)=0
\end{equation} 
\begin{equation} 
\label{eqdg}
Res_{p_1}\frac{dg}{x}+Res_{p_2}\frac{dg}{x}+Res_{p_3}\frac{dg}{x}=0.
\end{equation} 
\end{theorem}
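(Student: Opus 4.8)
The plan is to reduce the vanishing of $II(Q)(\zeta\odot\zeta)$ to residue identities by using an explicit basis of $I_2$. Since $C$ is trigonal, by Remark \ref{triqua} the map $\phi$ of \eqref{qua} associated with $L$ is an isomorphism; as $h^0(L)=2$, $h^0(M)=4$ and $\dim I_2=\frac{(g-2)(g-3)}{2}=6=\binom{4}{2}$, the six quadrics
\[
Q_{ij}:=\phi\big((s_1\wedge s_2)\otimes(\tau_i\wedge\tau_j)\big)=(s_1\tau_i)\odot(s_2\tau_j)-(s_1\tau_j)\odot(s_2\tau_i),\qquad 1\le i<j\le 4,
\]
where $\tau_1=\sigma s_1^2,\ \tau_2=\sigma s_1 s_2,\ \tau_3=\sigma s_2^2,\ \tau_4=t$, form a basis of $I_2$. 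Hence $\zeta$ is asymptotic if and only if $II(Q_{ij})(\zeta\odot\zeta)=0$ for every $i<j$, and it suffices to compute these six numbers.

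By Remark \ref{remmaroni} we may take $\zeta=[\debar(\frac{\rho_D}{\omega})]$ with $\omega=s_1 t$ and $D=div(s_1)=p_1+p_2+p_3$; recall that then $div(t)$ is disjoint from $Supp(D)$ (this is the condition for the split presentation $\omega=s_1 t$). Since $|L|$ is base point free and $s_2(q)=0$ one gets $q\notin Supp(D)$, so $\sigma$ and $s_2$ are nonvanishing on $Supp(D)$, and $D$ is reduced by hypothesis, so $s_1$ (hence $x=s_1/s_2$) has a simple zero at each $p_k$. For each $Q_{ij}$ I would write $Q_{ij}=\omega_1\odot\omega_2+\omega_3\odot\omega_4$ with $\omega_1=s_1\tau_i$, $\omega_2=s_2\tau_j$, $\omega_3=s_1\tau_j$, $\omega_4=-s_2\tau_i$. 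Then $g_1=\omega_1/\omega=\tau_i/t$ and $g_3=\omega_3/\omega=\tau_j/t$ are holomorphic near $Supp(D)$ — automatic because $div(t)$ is disjoint from $Supp(D)$ — so Proposition \ref{w} applies and, using the split form of $w(\zeta,\cdot,\cdot)$ recorded in the remark after Lemma \ref{formula},
\[
II(Q_{ij})(\zeta\odot\zeta)=-2\pi i\sum_{k=1}^{3}\big(Res_{p_k}(g_1\,dg_2)+Res_{p_k}(g_3\,dg_4)\big).
\]

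What remains is a bookkeeping of residues expressed through $x=s_1/s_2$ and $g=\sigma s_2^2/t$. Using that $g_1=\sigma s_1^2/t$ vanishes to order $2$ along $D$ when $i=1$, one checks that for $(i,j)\in\{(1,2),(1,3),(1,4)\}$ every summand is the residue of a holomorphic or of an exact form, whence $II(Q_{12})=II(Q_{13})=II(Q_{14})=0$ identically. For the remaining three quadrics one uses $Res_{p_k}(g\,dg)=0$, that residues of exact forms vanish, the integration-by-parts identity $Res_{p_k}\big(g\,d(1/x)\big)=-Res_{p_k}(dg/x)$, and $Res_{p_k}(dx/x)=1$ at a simple zero of $x$, to obtain
\begin{gather*}
II(Q_{24})(\zeta\odot\zeta)=2\pi i\sum_k g(p_k),\qquad II(Q_{23})(\zeta\odot\zeta)=2\pi i\sum_k g^2(p_k),\\
II(Q_{34})(\zeta\odot\zeta)=2\pi i\sum_k Res_{p_k}(dg/x).
\end{gather*}
Therefore $\zeta$ is asymptotic if and only if all three of these sums vanish, that is, if and only if \eqref{g}, \eqref{g^2} and \eqref{eqdg} all hold.

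I expect the main obstacle to be the uniform bookkeeping needed for the residue computations: one must tabulate the orders of vanishing of $g_1,g_2,g_3,g_4$ at each $p_k$ in terms of those of $s_1,s_2,\sigma,t$ (in particular the order $2$ vanishing of $\sigma s_1^2/t$ along $D$, which is what kills $Q_{12},Q_{13},Q_{14}$), and then evaluate six pairs of residues without sign errors. A smaller but essential point is the observation that the section $\tau$ in the split presentation $\omega=s_1\tau$ of $\zeta$ may be taken as the fourth basis vector $t$; this is what makes $div(t)$ automatically disjoint from $D$, which both legitimises the use of Proposition \ref{w} for all six quadrics and makes the three displayed conditions independent of the auxiliary choices.
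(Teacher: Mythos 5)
Your proposal is correct and follows essentially the same route as the paper: both reduce to the basis of rank~$\le 4$ quadrics coming from the pencils in $H^0(M)$ via the isomorphism of Remark \ref{triqua}, discard the quadrics involving $\sigma s_1^2$ (whose contributions vanish because $\sigma s_1^2/t$ vanishes to order $2$ along $D$), and evaluate the remaining three by the residue formula of Proposition \ref{w}, arriving at the same three conditions (your signs differ from the paper's only by the chosen orientation of each $Q_{ij}$, which is immaterial for the vanishing statement). Your explicit verification that $Q_{12},Q_{13},Q_{14}$ contribute nothing is a welcome expansion of the paper's ``we easily see''.
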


\begin{proof}

The space $W$ of quadrics that contain the canonical curve is spanned by the rank $\leq 4$ quadrics corresponding to all the pencils of $H^0(M)$ (see Remark \ref{triqua}). 
Using the chosen basis and computing the second fundamental form as in Proposition \ref{w}, we easily see  $II(Q) ( \zeta \odot \zeta)=0$, for all the quadrics constructed as above, except for (possibly) the ones corresponding to the three pencils in the subspace $\langle \sigma s_2^2, \sigma s_1 s_2, t\rangle \subset H^0(M)$. Denote by 
$$\Gamma_1 = ts_1\odot \sigma s_2^3  - \sigma s_1s_2^2 \odot ts_2 $$ the quadric corresponding to the pencil $\langle \sigma s_2^2, t \rangle$, 
$$\Gamma_2= ts_1\odot \sigma s_1s_2^2  -  \sigma s_1^2s_2\odot ts_2,$$
corresponding to the pencil $\langle \sigma s_1s_2, t \rangle$
and 
$$ \Gamma_3 = \sigma s_1^2s_2\odot \sigma s_2^3-(\sigma s_1s_2^2)^{\odot 2},$$
corresponding to the pencil $\langle \sigma s_2^2, \sigma s_1s_2\rangle$.


Then, using Proposition \ref{w}, we obtain: 
$$II(\Gamma_3)(\zeta \odot \zeta)=-2\pi i\sum_{i} Res_{p_i}( \frac{\sigma s_1^2s_2}{s_1t}d  (\frac{\sigma s_2^3}{s_1t}) ) + 2\pi i \sum_{i} Res_{p_i}( \frac{\sigma s_1s_2^2}{s_1t}d  (\frac{\sigma s_1 s_2^2}{s_1t}) ) =$$
$$=-2\pi i\sum_{i}Res_{p_i}(\frac{\sigma s_1s_2}{t}d (\frac{\sigma s_2^3}{s_1t}))=$$
$$=-2\pi i\sum_{i}Res_{p_i}(xg d (\frac{g}{x}))= 2\pi i\sum_{i}Res_{p_i}(g^2 \frac{dx}{x}) = 2\pi i\sum  g^2(p_i),$$
$$II(\Gamma_2)(\zeta \odot \zeta)= -2\pi i\sum_{i} Res_{p_i}( \frac{ts_1}{s_1t}d  (\frac{\sigma s_1 s_2^2}{s_1t}) ) + 2\pi i \sum_{i} Res_{p_i}( \frac{\sigma s_1^2s_2}{s_1t}d  (\frac{t s_2 }{s_1t}) )=$$

$$= 2\pi i \sum_{i} Res_{p_i}( xgd  (\frac{1 }{x}) ) = - 2\pi i \sum_{i} Res_{p_i}( g(\frac{dx }{x}) )= -2\pi i\sum  g(p_i),$$

$$II(\Gamma_1)(\zeta \odot \zeta) = -2\pi i\sum_{i} Res_{p_i}( \frac{ts_1}{s_1t}d  (\frac{\sigma s_2^3}{s_1t}) ) + 2\pi i \sum_{i} Res_{p_i}( \frac{\sigma s_1s_2^2}{s_1t}d  (\frac{t s_2 }{s_1t}) )= $$
$$=-2\pi i\sum_{i} Res_{p_i}( d  (\frac{g}{x}) ) + 2\pi i \sum_{i} Res_{p_i}(g d ( \frac{1}{x}) ) =-2\pi i \sum_i Res_{p_i}(\frac{dg}{x}) . $$
Hence $\zeta$ is asymptotic if and only if  equations \eqref{g}, \eqref{g^2}, \eqref{eqdg} are satisfied. 

\end{proof}

\begin{remark}
If $D = div(s_1)$ is not reduced, the computation is the same, one only has to consider the multiplicity of the points $p_i$ and take the sum over the support of $D$. 
\end{remark}

Now, by Riemann Roch,  $h^0(M^{\otimes 3}\otimes L)=24-5=19.$  Note that
$M^{\otimes 3}\otimes L\cong L^{\otimes 7}(3q).$
 Set  $V=H^0(L)$, then $s_1^i\cdot s_2^{n-i}$ is a basis of $Sym^n(V)$ the space of symmetric tensors of $V.$
 Consider the space
  $$t^3\cdot V+t^2\cdot Sym^3(V) \sigma +t\cdot Sym^5(V) \sigma^2 +Sym^7(V) \sigma^3\subset H^0(M^{\otimes 3}\otimes L),$$ in fact
 $ t^{3-i}Sym^{2i+1}(V)\subset H^0(M^{\otimes 3}\otimes L(-iq))$. Since $\dim Sym^k(V)=k+1$,   and $h^0(M^{\otimes 3}\otimes L)=19$, counting dimensions, we  see that  we must have  an equation
 $$\phi_1(s_1,s_2)\cdot t^3+\sigma \phi_3(s_1,s_2)t^2+\sigma^2 \phi_5(s_1,s_2)t+\sigma^3\phi_7(s_1,s_2)=0.$$

Since the curve is trigonal (non hyperelliptic) $\phi_1(s_1,s_2)\neq 0$ and since $\sigma \phi_3(s_1,s_2)t^2+\sigma^2 \phi_5(s_1,s_2)t+ \sigma^3 \phi_7(s_1,s_2)$ vanishes on $q$
we may take $\phi_1(s_1,s_2)=s_2$ then we get the relation:
\begin{equation}  s_2 t^3+\sigma \phi_3(s_1,s_2)t^2+\sigma^2 \phi_5(s_1,s_2)t+\sigma^3 \phi_7(s_1,s_2)=0.
\end{equation}

\begin{remark}
Looking at the order of vanishing of the terms of the above equation in $q$, since $s_2(q) = \sigma(q) =0$, we see that $\phi_3(s_1, s_2)$ must contain the term $s_1^3$. 

By a suitable change of coordinates of the form  $t' := t + \alpha \sigma s_1^2 + \beta \sigma s_1 s_2$, we can assume that $\phi_3(s_1, s_2) = a s_1^3 + b s_2^3$, with $a \neq 0$. 
\end{remark}

\bigskip

Dividing by $ \sigma^3 s_2^7$, we get the equation: 

$$y^3 + \frac{y^2}{s_2^3} \phi_3(s_1, s_2) + \frac{y}{s_2^5} \phi_5(s_1, s_2) + \frac{1}{s_2^7} \phi_7(s_1, s_2)= y^3 + y^2 \psi_3(x) + y\psi_5(x) +  \psi_7(x)=$$
$$= y^3 + y^2(a x^3 + b)  + y\psi_5(x) +  \psi_7(x),$$
where $\psi_k(x)$ is the polynomial in $x$ obtained by $\phi_k(s_1, s_2)$ dividing by $s_2^k$ and $a \neq 0$. 

So we have proven the following

\begin{proposition}
\label{maroni6}
Trigonal curves of genus 6 with Maroni degree $k =1$ are described by the following equation: 
\begin{equation}
\label{maroni6eq}
y^3 + y^2(a x^3 + b)  + y\psi_5(x) +  \psi_7(x) =0,
\end{equation}
where $a, b \in {\mathbb C}$, $ a \neq 0$, $\psi_5, \psi_7$ polynomials  of degree $\leq 5, 7$. 
\end{proposition}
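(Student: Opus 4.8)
\emph{Proof plan.} The strategy is to organise the preceding observations into a chain of normalisations that forces the affine model of $C$, in the coordinates $x=s_1/s_2$ and $y=t/(\sigma s_2^2)$, to take the displayed form. I begin with the numerical input of $k=1$: by Definition~\ref{maroni}, $h^0(L^{\otimes 2})=3$ and $h^0(L^{\otimes 3})=5$, so $\deg(K_C\otimes L^{-3})=1$ and $h^0(K_C\otimes L^{-3})=1$ by Riemann--Roch, giving $K_C=L^{\otimes 3}(q)$ for a unique $q\in C$. Then $M:=K_C\otimes L^\vee=L^{\otimes 2}(q)$ has $h^0(M)=4$ and embeds $C$ as a smooth septic in $|M|\cong\mathbb{P}^3$ (a singular point would produce a $g^2_5$). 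This fixes the coordinate data of the statement: a basis $\{s_1,s_2\}$ of $H^0(L)$ with $s_2(q)=0$, and an ordered basis $\{\sigma s_1^2,\sigma s_1 s_2,\sigma s_2^2,t\}$ of $H^0(M)$ with $\sigma\in H^0(\mathcal{O}_C(q))\setminus\{0\}$ and $t(q)\neq0$.

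The cubic relation comes from a dimension count in $H^0(M^{\otimes 3}\otimes L)=H^0(L^{\otimes 7}(3q))$, which has dimension $24-5=19$ by Riemann--Roch. The four subspaces $t^{3-i}\,\sigma^{\,i}\,\mathrm{Sym}^{2i+1}H^0(L)$ for $i=0,1,2,3$ have dimensions $2,4,6,8$, summing to $20>19$, so there is a nontrivial dependence
$$\phi_1(s_1,s_2)\,t^3+\sigma\,\phi_3(s_1,s_2)\,t^2+\sigma^2\,\phi_5(s_1,s_2)\,t+\sigma^3\,\phi_7(s_1,s_2)=0,$$
with $\phi_k$ homogeneous of degree $k$ in $s_1,s_2$. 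Here $\phi_1\neq0$ (else $y\in\mathbb{C}(x)$ and $\phi$ would factor through $\mathbb{P}^1$, absurd for a genus-$6$ embedding), and since the last three terms vanish at $q$ while $t(q)\neq0$, the form $\phi_1$ vanishes at $q$; rescaling gives $\phi_1=s_2$.

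I would then argue that this relation \emph{is} the defining equation of $C$, not merely a relation holding on it: since $y\notin\mathbb{C}(x)$ and $[\mathbb{C}(C):\mathbb{C}(x)]=3$ (the trigonal map), we get $\mathbb{C}(x,y)=\mathbb{C}(C)$, so the cubic (monic in $y$ after dividing by $\phi_1=s_2$) is the minimal polynomial of $y$ over $\mathbb{C}(x)$, hence irreducible and birationally cutting out $C$. This is the step I expect to demand the most care — everything else is Riemann--Roch bookkeeping and tracking orders of vanishing at $q$, whereas here one must rule out that the dimension-count relation is some lower-degree factor or an identically-true identity.

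It remains to normalise $\phi_3$ and dehomogenise. Comparing orders of vanishing at $q$ in the relation, with $\mathrm{ord}_q s_2=\mathrm{ord}_q\sigma=1$ and $t(q)\neq0$, shows $\phi_3(q)\neq0$, i.e.\ $\phi_3$ contains the monomial $s_1^3$. Replacing $t$ by $t'=t+\alpha\sigma s_1^2+\beta\sigma s_1 s_2$ — still a legitimate fourth basis vector, as the added terms vanish at $q$ — eliminates the $s_1^2 s_2$ and $s_1 s_2^2$ terms of $\phi_3$, leaving $\phi_3=a s_1^3+b s_2^3$ with $a\neq0$ (the same order comparison applies since $t'(q)=t(q)\neq0$). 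Dividing the whole relation by $\sigma^3 s_2^7$ replaces each $\phi_k(s_1,s_2)/s_2^k$ by a polynomial $\psi_k(x)$ of degree $\le k$, with $\psi_3(x)=ax^3+b$, yielding $y^3+y^2(ax^3+b)+y\psi_5(x)+\psi_7(x)=0$ as claimed.
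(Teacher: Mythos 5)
Your argument is correct and follows essentially the same route as the paper: the Riemann--Roch identifications $K_C=L^{\otimes 3}(q)$, $M=L^{\otimes 2}(q)$, the dimension count $20>19$ in $H^0(M^{\otimes 3}\otimes L)$ producing the relation, the normalisations $\phi_1=s_2$ and $\phi_3=as_1^3+bs_2^3$ via the order-of-vanishing comparison at $q$ and the substitution $t\mapsto t+\alpha\sigma s_1^2+\beta\sigma s_1s_2$, and the final division by $\sigma^3 s_2^7$. The only addition is your explicit verification that the cubic is the minimal polynomial of $y$ over $\mathbb{C}(x)$ (via $[\mathbb{C}(x,y):\mathbb{C}(x)]$ dividing $3$ and exceeding $1$), a point the paper leaves implicit; this is a welcome, correct refinement rather than a different approach.
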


We will now describe the locus of trigonal curves of genus 6 with Maroni degree $k =1$ admitting an asymptotic direction of rank 1 different from a Schiffer variation at a ramification point of the $g^1_3$. 
We have the following 

\begin{theorem}
\label{asy6}
Trigonal curves of genus 6 with Maroni degree $k =1$ admitting an asymptotic direction of rank 1 different from a Schiffer variation at a ramification point of the $g^1_3$ satisfy the following equation: 
\begin{equation}
\label{asy6eq}
y^3 +  y^2 x^3  + yx^2\psi_3(x) +  \psi_7(x)=0,
\end{equation}

where $\psi_3, \psi_7$ are polynomials  of degree $\leq 3, 7$ such that $\psi_7(0) \neq 0$. 
\end{theorem}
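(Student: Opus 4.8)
The plan is to translate the three scalar conditions \eqref{g}, \eqref{g^2}, \eqref{eqdg} --- which by the theorem above characterise when the split deformation $\zeta=[\debar(\rho_D/\omega)]$, $\omega=s_1t$, $D=div(s_1)$, is asymptotic --- into conditions on the coefficients of the normal form \eqref{maroni6eq}. First I would fix a convenient coordinate system. By Remark \ref{remmaroni} we may take $\omega=s_1t$ with $D=div(s_1)$, and since Proposition \ref{prop_split} forces $div(s_1)$ and $div(t)$ to be disjoint, the numbers $y(p_i)=t(p_i)/(\sigma(p_i)s_2(p_i)^2)$ are finite and nonzero. After translating the affine coordinate $x=s_1/s_2$ we may assume $D$ lies over $x=0$; this perturbs the coefficient $ax^3+b$ of $y^2$, but a change $t\mapsto t'=t+\alpha\sigma s_1^2+\beta\sigma s_1s_2$ restores the shape \eqref{maroni6eq}, with $a$ still nonzero (by the order--at--$q$ argument) and $\psi_7(0)\neq 0$ preserved (since $t'=t$ along $D$). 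Such a change fixes $t$, hence $g$, hence the values $g(p_i)$, along $D$, and one checks that it does not alter $\zeta$; it therefore leaves \eqref{g} and \eqref{g^2} unchanged, while the discrepancy it produces in \eqref{eqdg} equals, up to sign, $\beta\sum_i g(p_i)^2$, which vanishes as soon as \eqref{g^2} is imposed. So the full system of conditions \eqref{g}, \eqref{g^2}, \eqref{eqdg} may be tested in coordinates where the curve is \eqref{maroni6eq}, with $a$ rescaled to $1$, and $D$ is the fibre over $x=0$.

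Next I would read off elementary symmetric functions. Setting $x=0$ in \eqref{maroni6eq} shows that $y(p_1),y(p_2),y(p_3)$ are the roots of $y^3+by^2+\psi_5(0)y+\psi_7(0)=0$, so by Vieta $\sum_i y(p_i)=-b$, $\sum_{i<j}y(p_i)y(p_j)=\psi_5(0)$ and $\prod_i y(p_i)=-\psi_7(0)\neq 0$. Since $g(p_i)=1/y(p_i)$, we get $\sum_i g(p_i)=-\psi_5(0)/\psi_7(0)$, so \eqref{g} holds if and only if $\psi_5(0)=0$. Granting this, $\sum_i g(p_i)^2=\big(\sum_i g(p_i)\big)^2-2\sum_{i<j}g(p_i)g(p_j)=-2b/\psi_7(0)$, because $\sum_{i<j}g(p_i)g(p_j)=\sum_i y(p_i)/\prod_i y(p_i)=b/\psi_7(0)$; hence \eqref{g^2} holds if and only if $b=0$. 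For the third condition I would use that $x$ is a local coordinate at each $p_i$ (for $D$ a general reduced fibre) to write $Res_{p_i}\frac{dg}{x}=g_i'(0)$, where $g_i(x)=1/y_i(x)$ is the branch of $g$ through $p_i$; but globally $\sum_i g_i(x)=\sum_i 1/y_i(x)=-\psi_5(x)/\psi_7(x)$ by Vieta applied to the cubic in $y$, so $\sum_i Res_{p_i}\frac{dg}{x}=\big(\sum_i g_i\big)'(0)=-\psi_5'(0)/\psi_7(0)$ using $\psi_5(0)=0$, and \eqref{eqdg} holds if and only if $\psi_5'(0)=0$.

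Combining the three equivalences, $\zeta$ is asymptotic exactly when $b=0$ and $\psi_5$ vanishes to order at least $2$ at $x=0$, i.e. $\psi_5(x)=x^2\psi_3(x)$ with $\psi_3$ of degree at most $3$; substituting into \eqref{maroni6eq} and keeping $\psi_7(0)\neq 0$ yields precisely \eqref{asy6eq}, which proves the necessity asserted. The same computation run in reverse gives the converse: on any curve \eqref{asy6eq} the fibre $D$ over $x=0$ produces, via $\omega=s_1t$, a deformation $\zeta$ satisfying \eqref{g}, \eqref{g^2}, \eqref{eqdg}, hence asymptotic; by Proposition \ref{prop_split} it has rank $1$, and from $ker(\cup\zeta)=s_1\cdot H^0(M)+t\cdot H^0(L)$ one checks that this space is not of the form $H^0(K_C(-p))$ for any $p\in C$, so $\zeta$ is not a Schiffer variation. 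The case in which $D$ is non-reduced is handled verbatim, summing residues over $Supp(D)$ with multiplicities.

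I expect the main obstacle to be the normalisation bookkeeping of the first step: checking carefully that the translation bringing $D$ over $x=0$, followed by the correction $t\mapsto t'$, returns the equation to the form \eqref{maroni6eq} with $a\neq 0$ and $\psi_7(0)\neq 0$, and --- the slightly delicate point --- that the three intrinsic asymptoticity conditions are unchanged by that correction, which hinges on $g(p_i)$ being insensitive to it while the residue discrepancy in \eqref{eqdg} is a multiple of $\sum_i g(p_i)^2$ and so vanishes modulo \eqref{g^2}. Once this is settled, everything else is the residue formula of Proposition \ref{w} --- already evaluated on the three exceptional quadrics $\Gamma_1,\Gamma_2,\Gamma_3$ in the theorem above --- together with the elementary symmetric-function identities for the cubic $y^3+(ax^3+b)y^2+\psi_5(x)y+\psi_7(x)=0$.
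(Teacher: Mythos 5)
Your proposal is correct and follows essentially the same route as the paper: invoke the three residue conditions \eqref{g}, \eqref{g^2}, \eqref{eqdg}, apply Vieta to the specialisation of \eqref{maroni6eq} at $x=0$ to get $b=\psi_5(0)=0$, and then show the residue condition forces $\psi_5'(0)=0$, i.e.\ $\psi_5(x)=x^2\psi_3(x)$. The only differences are cosmetic: you are somewhat more careful than the paper about checking that the normalising substitution $t\mapsto t+\alpha\sigma s_1^2+\beta\sigma s_1s_2$ leaves $\zeta$ and the three conditions unchanged (a point the paper leaves implicit), and you evaluate \eqref{eqdg} via the global identity $\sum_i 1/y_i(x)=-\psi_5(x)/\psi_7(x)$ instead of the paper's implicit differentiation $dy=-P_x\,dx/P_y$, which yields the same result.
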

\begin{proof}

Recalling that $y = \frac{1}{g}$, equations \eqref{g} and  \eqref{g^2} easily give: $\sum_{i=1}^3 y(p_i) = 0$, $\sum_{i=1}^3 (y(p_i))^2 = 0$, and $ \sum_{i <j} y(p_i )y(p_j) =0$.

Set $y_i := y(p_i)$, since $x(p_i) = 0$,  equation \eqref{maroni6eq} gives
$$y_i^3 + b y_i^2  + y_i\psi_5(0) +  \psi_7(0)=0,  \ \forall i=1,2,3.$$

Hence the equation 

$z^3  + b z^2 + \psi_5(0) z  + \psi_7(0)$ has  the elements $y_i$ as  roots, so $b = -\sum_{i=1}^3 y_i =0$ and $\psi_5(0) = \sum_{i <j} y_i y_j =0$. 

Moreover, since $a \neq 0$, changing $t$ (hence $y$) by a non zero multiple, we can assume that $a=1$. 

So the equation \eqref{maroni6eq} becomes 

$$ P(x, y) = y^3 + y^2 x^3  + yx\psi_4(x) +  \psi_7(x)=0. $$

Notice that $y_i^3 = - \psi_7(0) \neq 0 $, since $s_1$ and $t$ have no common zeros. 

Consider now equation \eqref{eqdg}. We have $\frac{dg}{x}  =\frac{1}{x} d (\frac{1}{y}) =  - \frac{1}{x} \frac{dy}{y^2}$, and $P_x dx + P_ydy=0,$ so $dy = - \frac{P_xdx}{P_y }$. Hence  
$$P_x = 3x^2 y^2 + \psi_4(x) y + xy \psi'_4(x) + \psi'_7(x), \ P_y = 3y^2 + 2y x^3 + x \psi_4(x).$$

 So equation \eqref{eqdg} is
$$0= \sum_{i=1}^3 Res_{p_i} (\frac{dg}{x}) = - \sum_{i=1}^3  Res_{p_i}( \frac {P_x }{y^2 P_y} \frac{dx}{x}) =  - \sum_{i=1}^3  \frac {P_x }{y^2 P_y}(p_i)= - \sum_{i=1}^3  \frac{ \psi_4(0)y_i + \psi'_7(0)}{3y_i^4} =$$
$$ = -\frac{1}{3} \sum_{i=1}^3 \frac{\psi_4(0)}{y_i^3}  -\frac{1}{3} \sum_{i=1}^3 \frac{\psi'_7(0)}{y_i^4}=
   \frac{ \psi_4(0)}{\psi_7(0)} - \frac{1}{3}\frac{ \psi'_7(0) }{\psi_7(0)}\sum_{i=1}^3\frac{1}{y_i} =  \frac{\psi_4(0)}{\psi_7(0)}  ,$$
   hence $\psi_4(0) = 0$ and the equation \eqref{maroni6eq} is 

$$P(x, y) = y^3 +  y^2 x^3  + yx^2\psi_3(x) +  \psi_7(x)=0. $$

\end{proof}

\begin{remark}
Since  in equation \eqref{asy6eq}, we have $\psi_7(0)\neq 0$, changing  $s_1$ and $s_2$ (hence $x$ and $y$) by suitable non-zero multiples, we can assume $\psi_7(0) =1$. 
Hence equation \eqref{asy6eq} depends on 11 parameters, while the dimension of the locus of trigonal curves of genus 6 with Maroni degree 1 has dimension 12. 
\end{remark}

By \cite[Lemma 3.3]{bpz},  the tangent space to the trigonal locus is: 
\begin{equation}
\label{tantri}
T_{{tri}, [C]} = \{ \zeta \in H^1(T_C) \ | \ \zeta \cdot \Omega=0 \in H^1(L^{\otimes 2})\},
\end{equation}
where $\Omega = \mu_{1,L}(s_1 \wedge s_2) = s_2^2d(\frac{s_1}{s_2})$, while the tangent space to the locus of trigonal curves with Maroni degree $k=1$ is 

$$T_{{Maroni}, [C]}=  \{ \zeta \in T_{{tri}, [C]} \ | \ \zeta \cdot \frac{\mu_{1,M}(t \wedge \sigma s_2^2) }{s_2} =0\}.$$
Notice that  $\mu_{1,M}(t \wedge \sigma s_2^2) = t^2  d(g) $ and clearly $s_2$ divides $ t^2  d(g)$, hence the element $\frac{\mu_{1,M}(t \wedge \sigma s_2^2) }{s_2} \in H^0(K_C^{\otimes 2})$. 

Assume $\zeta = [ \debar(\frac{\rho_D}{\omega})]$, with $\omega = s_1 t$ as above. We have the following

\begin{proposition}
\begin{enumerate}
\item $\zeta \in T_{{tri}, [C]}$ if and only if equation \eqref{g} is satisfied.
\item $\zeta \in T_{{Maroni}, [C]}$ and it is asymptotic if and only if it satisfies equation \eqref{asy6eq}
with  $\psi_7(0) \neq 0$, $\psi'_7(0) =0$. 

\end{enumerate}
\end{proposition}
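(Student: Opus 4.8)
The plan is to translate each of the two membership conditions into a residue identity on the divisor $D=div(s_1)=p_1+\dots+p_k$ supporting $\zeta$, using the Dolbeault representative $\zeta=[\debar(\rho_D/\omega)]$ with $\omega=s_1t$ together with Serre duality. The elementary tool is the following. For any $\mathcal B\in H^0(K_C^{\otimes 2})$ the quotient $\mathcal B/\omega$ is a meromorphic $1$-form whose only poles in a neighbourhood of $D$ lie on $D$ (where $\rho_D\equiv1$), and since $\mathcal B$ is holomorphic one has $\debar(\rho_D\,\mathcal B/\omega)=(\debar\rho_D)(\mathcal B/\omega)$; Stokes' theorem (as in the proof of Lemma \ref{formula}) then gives, evaluating $\zeta\cdot\mathcal B\in H^1(K_C)$ against $1\in H^0(\cO_C)$ under Serre duality,
\[
\langle\,\zeta\cdot\mathcal B\,,\,1\,\rangle=\int_C\debar\!\Big(\rho_D\,\tfrac{\mathcal B}{\omega}\Big)=-2\pi i\sum_{p\in Supp(D)}Res_{p}\Big(\tfrac{\mathcal B}{\omega}\Big).
\]
(For non-reduced $D$ one takes residues with multiplicity, exactly as in the Remark following Theorem \ref{asy6}.) I will apply this twice — once for the trigonal condition and once for $\mathcal B=\mu_{1,M}(t\wedge\sigma s_2^2)/s_2$.

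For part (1): by \eqref{tantri}, $\zeta\in T_{{tri},[C]}$ iff $\zeta\cdot\Omega=0$ in $H^1(L^{\otimes 2})$, where $\Omega=\mu_{1,L}(s_1\wedge s_2)=s_2^2\,d(s_1/s_2)$, and $H^1(L^{\otimes 2})^{\vee}=H^0(K_C\otimes L^{-2})$. From $K_C=L^{\otimes 3}(q)$ and Riemann--Roch one gets $K_C\otimes L^{-2}=L(q)$ with $h^0=2$ and basis $\{\sigma s_1,\sigma s_2\}$; moreover $\langle\zeta\cdot\Omega,\sigma s_i\rangle=\langle\zeta\cdot(\Omega\cdot\sigma s_i),1\rangle$, so the formula above applies with $\mathcal B=\Omega\cdot\sigma s_i\in H^0(K_C^{\otimes 2})$. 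Writing $x=s_1/s_2$ and $g=\sigma s_2^2/t$ one computes $\Omega\cdot\sigma s_1/\omega=g\,dx$, which is regular at every $p_i$, so the pairing against $\sigma s_1$ vanishes identically; and $\Omega\cdot\sigma s_2/\omega=g\,dx/x$, with residue $g(p_i)$ at $p_i$. Hence $\zeta\cdot\Omega=0$ iff $\sum_i g(p_i)=0$, i.e.\ \eqref{g}; this is (1).

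For part (2): $\zeta$ is asymptotic iff, by Theorem \ref{asy6}, $C$ satisfies \eqref{asy6eq} (with necessarily $\psi_7(0)\neq0$); then the $y_i=y(p_i)$ are the three roots of $z^3+\psi_7(0)=0$, so $\sum_i 1/y_i=0$, i.e.\ \eqref{g} holds and, by (1), $\zeta\in T_{{tri},[C]}$. It remains to evaluate the extra defining condition of $T_{{Maroni},[C]}$, namely $\zeta\cdot\beta=0$ in $H^1(K_C)$ with $\beta=\mu_{1,M}(t\wedge\sigma s_2^2)/s_2=t^2\,dg/s_2$. One first checks $\beta\in H^0(K_C^{\otimes 2})$: a priori $t^2\,dg/s_2$ lies in $K_C^{\otimes2}\otimes\cO(q)$, but $g$ vanishes to order $3$ at $q$, so $\beta$ vanishes at $q$ and descends to $K_C^{\otimes2}$. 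By the displayed formula, $\zeta\cdot\beta=0$ iff $\sum_i Res_{p_i}(\beta/\omega)=0$, with $\beta/\omega=t\,dg/(s_1s_2)$. A local computation at $p_i$ in the coordinate $x$ (using $s_1=xs_2$ in the trivialisation of $L$ by $s_2$, and that $t/s_2^2\in H^0(\cO(q))$ equals $\sigma/g$, hence contributes the value $y_i$) gives
\[
Res_{p_i}\Big(\tfrac{\beta}{\omega}\Big)=\frac{P_x(0,y_i)}{y_i\,P_y(0,y_i)},
\]
i.e.\ the residue $Res_{p_i}(dg/x)$ of \eqref{eqdg} multiplied by $y_i$. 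Under \eqref{asy6eq} one has $P_x(0,y_i)=\psi_7'(0)$ and $P_y(0,y_i)=3y_i^2$, so $Res_{p_i}(\beta/\omega)=-\psi_7'(0)/(3\psi_7(0))$, independently of $i$, whence $\sum_i Res_{p_i}(\beta/\omega)=-\psi_7'(0)/\psi_7(0)$. Therefore $\zeta\cdot\beta=0$ iff $\psi_7'(0)=0$, which together with the first step proves (2).

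The step I expect to be delicate is not the residue calculus but the bookkeeping forced by $K_C=L^{\otimes 3}(q)\neq L^{\otimes 3}$: one must verify that $\beta$ and $\beta/\omega$ genuinely descend to $K_C^{\otimes 2}$, respectively $K_C$ (so that $Res_{p_i}(\beta/\omega)$ is an honest complex number), and in the local computation that the factor $t/s_2^2$ contributes precisely the value $y_i$ and not an $\cO(q)$-twisted, ill-defined quantity. Once this twist is handled, everything reduces to the two short residue computations above, Theorem \ref{asy6}, and part (1); one may also note as a check that the same formula applied to $\mathcal B=$ the entries of $\Gamma_1,\Gamma_2$ recovers the values of $II(\Gamma_1),II(\Gamma_2)$ on $\zeta\odot\zeta$ computed in the proof of Theorem \ref{asy6}.
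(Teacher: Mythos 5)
Your proof is correct and follows essentially the same route as the paper: both parts reduce to pairing $\zeta=[\debar(\rho_D/\omega)]$ against explicit elements of $H^0(K_C^{\otimes 2})$ and converting the integral into a sum of residues at the points of $D=\mathrm{div}(s_1)$ via Stokes, yielding $\sum_i g(p_i)$ for the trigonal condition and (up to sign) $\psi_7'(0)/\psi_7(0)$ for the Maroni condition. Your treatment is in fact slightly more careful than the paper's on the bookkeeping you flag as delicate --- identifying $H^1(L^{\otimes 2})^{\vee}=H^0(L(q))=\sigma\cdot H^0(L)$ and checking that $\mu_{1,M}(t\wedge\sigma s_2^2)/s_2$ genuinely descends to $H^0(K_C^{\otimes 2})$ through the $\cO_C(q)$ twist --- points the paper passes over silently.
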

\begin{proof}
Notice that $\zeta \cdot \Omega=0 \in H^1(L^{\otimes 2})$ if and only if $ \zeta \cdot \Omega \cdot s_i=0$, for $i=1,2$. We have 
$$  \zeta \cdot \Omega \cdot s_1 = \int_C \Omega s_1 \frac{ \overline{\partial} \rho_D}{s_1 t} =0,$$
since $\rho_D \equiv 0$ on the zero locus of $t$, while 
$$  \zeta \cdot \Omega \cdot s_2 = \int_C \Omega s_2 \frac{ \overline{\partial} \rho}{s_1 t} =
 \int_C s_2^2 d(\frac{s_1}{s_2} )   s_2 \frac{ \overline{\partial} \rho}{s_1 t}=  \int_C\frac{ s_2^2}{t} \overline{\partial} \rho \frac{dx}{x} = \sum_{i =1}^3 g(p_i),$$
 hence the first statement follows.

We have 
$$\int_C  \frac{\mu_{1,M}(t \wedge \sigma s_2^2) }{s_2} \wedge \frac{ \overline{\partial} \rho}{s_1 t} = \int_C  \frac{t^2 }{s_2} d (\frac{\sigma s_2^2}{t})\wedge \frac{ \overline{\partial} \rho}{s_1 t} =$$
$$\int_C  d (\frac{\sigma s_2^2}{t})\wedge \frac{ \overline{\partial}( \rho t)}{s_1 s_2} = -\int_C d (\frac{\rho t}{s_1 s_2} d (\frac{\sigma s_2^2}{t})) = \int_{\cup \partial D_i} \frac{t}{s_1s_2} d (\frac{\sigma s_2^2}{t})=$$
$$= \sum_{i=1}^3 Res_{p_i}( \frac{t}{s_1s_2}dg)=  \sum_{i=1}^3 Res_{p_i} (\frac{f}{g}dg).$$

Now we have: 

$\frac{f}{g}dg= \frac{y}{x} d( \frac{1}{y})= - \frac{1}{xy} dy,$ $dy = - \frac{P_xdx}{P_y}, $ so 
$$\sum_{i=1}^3 Res_{p_i} (\frac{f}{g}dg)= -\sum_{i=1}^3 Res_{p_i}(\frac{P_x}{y P_y}) \frac{dx}{x} = - \sum_{i=1}^3\frac{P_x}{y P_y} (p_i) = \sum_{i=1}^3 \frac{ \psi'_7(0)}{3 y_i^3} =   \frac{ \psi'_7(0)}{\psi_7(0)}. $$

So $\zeta$  asymptotic is  in $T_{{Maroni}, [C]}$  if and only if $\psi'_7(0) =0$. 

 \end{proof}

\subsection{Genus 7 Maroni special}

Assume $C$ is a (non hyperelliptic) trigonal curve of genus 7 with Maroni degree $k =1$. 
We will show that  also in this case there can exist asymptotic directions that are not Schiffer variations in the ramification points of the $g^1_3$ and we will describe them. Moreover we give a parametrisation of the locus of the trigonal curves of genus 7 with Maroni degree 1 giving an explicit equation and we  describe  the sublocus of those trigonal curves admitting such asymptotic directions.

Take a deformation $\zeta$ of rank $1 = Cliff(C) = gon(C)-2$ and let $L$ be the $g^1_3$. The assumption on the Maroni degree gives $h^0(L^{\otimes 3}) =5$. By Riemann Roch we have $h^0(K_C \otimes L^{-3}) =2$, so $K_C \otimes L^{-3}$ is the $g^1_3$, so $K_C = L^{\otimes 4}$. Set as usual $V= H^0(L) = \langle s_1, s_2 \rangle$, then $H^0(L^{\otimes 3}) =\langle s_1^3, s_2^3, s_1^2s_2, s_1s_2^2, t \rangle$. 

As in the case $g=6$, we can use Remark \ref{remmaroni} and assume $\zeta = [ \debar(\frac{\rho_D}{\omega})]$, where   $D$ is the zero divisor of $s_1$ and  $\omega = s_1t$.


Set $x= \frac{s_1}{s_2}$, $g = \frac{s_2^3}{t}$, $y = \frac{1}{g} = \frac{t}{s_2^3}$, $D = div(s_1) = p_1 + p_2 +p_3$.  In the case $D$  not reduced, the computation is the same, one only has to consider the multiplicity of the points $p_i$ and take the sum over the support of $D$.

\begin{theorem}
With the above notation, let $\zeta = [ \debar(\frac{\rho_D}{\omega})]$ be a deformation of rank 1  that is not a Schiffer variation. Then $\zeta$ is asymptotic if and only if equations  \eqref{g}, \eqref{g^2}, \eqref{eqdg} are satisfied, where $x= \frac{s_1}{s_2}$, $g = \frac{s_2^3}{t}$. 
\end{theorem}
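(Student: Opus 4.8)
The plan is to carry out, almost verbatim, the residue computation used in the genus $6$ case. Here $M:=K_C\otimes L^{\vee}=L^{\otimes 3}$, and the Maroni hypothesis $h^0(L^{\otimes 3})=5$ furnishes the ordered basis $\{s_1^3,\,s_1^2s_2,\,s_1s_2^2,\,s_2^3,\,t\}$ of $H^0(M)$. Since $C$ is trigonal non hyperelliptic, Remark~\ref{triqua} tells us that $I_2(K_C)$ is spanned by the rank $\le 4$ quadrics $Q_{\tau_1,\tau_2}:=(s_1\tau_1)\odot(s_2\tau_2)-(s_1\tau_2)\odot(s_2\tau_1)$ attached to the pencils $\langle\tau_1,\tau_2\rangle\subset H^0(M)$, and moreover that $\phi$ is an isomorphism; since $\bigwedge^2 H^0(L)$ is one-dimensional, $\tau_1\wedge\tau_2\mapsto II(Q_{\tau_1,\tau_2})(\zeta\odot\zeta)$ is then a well-defined linear functional on $\bigwedge^2 H^0(M)$, and $\zeta$ is asymptotic exactly when it vanishes identically. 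Throughout, $\zeta=[\debar(\rho_D/\omega)]$ with $\omega=s_1t$ and $D=div(s_1)$, the form provided by Remark~\ref{remmaroni}.

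For a given pencil I would write $Q_{\tau_1,\tau_2}=\omega_1\odot\omega_2+\omega_3\odot\omega_4$ with $\omega_1=s_1\tau_1$, $\omega_2=s_2\tau_2$, $\omega_3=s_1\tau_2$, $\omega_4=-s_2\tau_1$, and set $u_j:=\tau_j/t$, which is holomorphic near $Supp(D)$ because $div(s_1)$ and $div(t)$ are disjoint. Then $g_1=\omega_1/\omega=u_1$ and $g_3=\omega_3/\omega=u_2$, so the hypothesis of Proposition~\ref{w} is met; combining it with the split-case Remark following Lemma~\ref{formula} and the identity $u_1\,d(u_2/x)-u_2\,d(u_1/x)=(u_1\,du_2-u_2\,du_1)/x$, where $x=s_1/s_2$, I obtain
\[
II(Q_{\tau_1,\tau_2})(\zeta\odot\zeta)=-2\pi i\sum_{p\in Supp(D)}Res_p\!\left(\frac{u_1\,du_2-u_2\,du_1}{x}\right).
\]
The crucial point is that this functional kills $s_1^2\,H^0(L)=\langle s_1^3,\,s_1^2s_2\rangle$: if $\tau_1\in s_1^2H^0(L)$ then $u_1$ vanishes to order $\ge 2m_p$ at each $p\in Supp(D)$ of multiplicity $m_p$ in $D$, so $u_1\,du_2-u_2\,du_1$ vanishes to order $\ge m_p$ there, and dividing by $x$ (order $m_p$) leaves no pole, hence zero residue. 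Thus the functional factors through $\bigwedge^2\big(H^0(M)/s_1^2H^0(L)\big)$, whose $3$-dimensional underlying quotient is spanned by $s_2^3,\,s_1s_2^2,\,t$, so only the three quadrics $\Gamma_1\leftrightarrow\langle s_2^3,t\rangle$, $\Gamma_2\leftrightarrow\langle s_1s_2^2,t\rangle$, $\Gamma_3\leftrightarrow\langle s_2^3,s_1s_2^2\rangle$ can contribute.

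Finally I would substitute $u=1$ for $\tau=t$, $u=g:=s_2^3/t$ for $\tau=s_2^3$ and $u=xg$ for $\tau=s_1s_2^2$ into the displayed formula and simplify, using $Res_{p_i}(dx/x)=1$ on the reduced divisor $D=p_1+p_2+p_3$ (the non-reduced case only adds multiplicities): one finds that $II(\Gamma_3)(\zeta\odot\zeta)$, $II(\Gamma_2)(\zeta\odot\zeta)$, $II(\Gamma_1)(\zeta\odot\zeta)$ are, up to nonzero constants, $\sum_i g^2(p_i)$, $\sum_i g(p_i)$, $\sum_i Res_{p_i}(dg/x)$ respectively. Hence $\zeta$ is asymptotic precisely when these three vanish, i.e. precisely when \eqref{g}, \eqref{g^2} and \eqref{eqdg} hold. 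The only step needing attention is the vanishing-order bookkeeping that discards the pencils outside $\langle s_2^3,s_1s_2^2,t\rangle$ together with the clean-up of the three surviving residues; here it is entirely routine, being formally identical to the genus $6$ computation, provided one fixes the basis of $M$ so that among its vectors $s_1$ divides exactly $s_1^3$, $s_1^2s_2$ and $s_1s_2^2$. No idea beyond the genus $6$ case is required.
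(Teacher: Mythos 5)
Your proposal is correct and follows essentially the same route as the paper: it uses the trigonal presentation of $I_2$ by rank~$\le 4$ quadrics attached to pencils in $H^0(K_C\otimes L^{\vee})=H^0(L^{\otimes 3})$, reduces to the three pencils in $\langle s_2^3, s_1s_2^2, t\rangle$, and evaluates them by the residue formula of Proposition~\ref{w}. In fact you supply more detail than the paper does at the two points it leaves implicit (the vanishing-order argument showing the functional kills $s_1^2H^0(L)\wedge H^0(M)$, and the unified formula $-2\pi i\sum_p Res_p\bigl((u_1\,du_2-u_2\,du_1)/x\bigr)$), and your three residues match equations \eqref{eqdg}, \eqref{g}, \eqref{g^2} exactly.
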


\begin{proof}

A basis of $I_2(K_C)$ is given by the rank $\leq 4 $ quadrics determined by the pencils in $H^0(K_C \otimes L^{\vee}) = H^0(L^{\otimes 3})= \langle s_1^3, s_2^3, s_1^2s_2, s_1s_2^2, t \rangle$. Computing the second fundamental form as in Proposition \ref{w}, we easily see  $II(Q) ( \zeta \odot \zeta)=0$ for all the quadrics constructed as above, except for (possibly) the three pencils in the subspace $\langle  s_2^3, s_1 s_2^2, t\rangle \subset H^0(K_C \otimes L^{\vee})$.

Denote by  $$\Gamma_1 = ts_1\odot  s_2^4  -ts_2\odot s_1s_2^{3}, \ \Gamma_2= ts_1\odot  s_1s_2^3  -ts_2\odot  s_1^2s_2^2, \ \Gamma_3 =  s_1^2s_2^2\odot  s_2^4-( s_1s_2^3)^{\odot 2}$$ the corresponding three quadrics.

A similar computation as in the case $g=6$ shows that  $\zeta$ is asymptotic if and only if equations \eqref{g}, \eqref{g^2}, \eqref{eqdg} are satisfied. 
\end{proof}
Observe that 

\begin{equation}
t^3 + t^2 Sym^3(V) + t Sym^6(V) + Sym^9(V) \subset H^0(L^{\otimes 9})=H^0(M^{\otimes 3}).
\end{equation}
Since $\dim(Sym^k(V) )= k+1$ and $h^0(L^{\otimes 9}) = 21$, we have an equation 
\begin{equation}
t^3 + t^2 \phi_3(s_1, s_2) + t \phi_6(s_1, s_2) + \phi_9(s_1, s_2) =0.  
\end{equation}
Dividing by $s_2^9$, we get the equation:

\begin{equation}
P(x,y) =y^3 + y^2 \psi_3(x) + y \psi_6(x) + \psi_9(x) =0.
\end{equation}

So we have proven the following 
\begin{proposition}
\label{maroni7}
Trigonal curves of genus 7 with Maroni degree $k =1$ are described by the following equation: 
\begin{equation}
\label{maroni7eq}
y^3 + y^2 \psi_3(x) + y \psi_6(x) + \psi_9(x) =0,\end{equation}
where  $\psi_3, \psi_6, \psi_9$ are polynomials  of degree $\leq 3, 6, 9$. 
\end{proposition}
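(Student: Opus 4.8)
The plan is to follow the strategy already used above for genus $6$, now with the cleaner model $K_C\cong L^{\otimes 4}$. First, the Maroni hypothesis $k=1$ gives $h^0(L^{\otimes 3})=5$ (Definition \ref{maroni}); then $K_C\otimes L^{-3}$ has degree $2g-2-9=3$ and, by Riemann--Roch together with $h^1(K_C\otimes L^{-3})=h^0(L^{\otimes 3})=5$, satisfies $h^0(K_C\otimes L^{-3})=2$, so it is a $g^1_3$ (necessarily base point free, else $C$ would be hyperelliptic); by uniqueness of the trigonal series it equals $L$, whence $K_C\cong L^{\otimes 4}$ and $M=K_C\otimes L^{\vee}\cong L^{\otimes 3}$. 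Fix a basis $s_1,s_2$ of $V=H^0(L)$, complete $\{s_1^3,s_1^2s_2,s_1s_2^2,s_2^3\}$ by a fifth section $t$ to a basis of $H^0(L^{\otimes 3})$ (so $t$ is not in the image of $\operatorname{Sym}^3V$), and set $x=s_1/s_2$, $y=t/s_2^3$.

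Next I carry out the dimension count in $H^0(M^{\otimes 3})=H^0(L^{\otimes 9})$, which has dimension $27-g+1=21$ since $\deg L^{\otimes 9}>2g-2$. The multiplication map from the $22$-dimensional space $\langle t^3\rangle\oplus t^2\operatorname{Sym}^3V\oplus t\operatorname{Sym}^6V\oplus\operatorname{Sym}^9V$ to $H^0(L^{\otimes 9})$ cannot be injective, so there is a nonzero relation $c\,t^3+t^2\phi_3(s_1,s_2)+t\,\phi_6(s_1,s_2)+\phi_9(s_1,s_2)=0$ with $\phi_j\in\operatorname{Sym}^jV$. Dividing by $s_2^9$ and writing $\psi_j(x)=\phi_j(x,1)$ turns this into
$$c\,y^3+\psi_3(x)\,y^2+\psi_6(x)\,y+\psi_9(x)=0,\qquad \deg\psi_j\le j .$$

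It remains to see that $c\ne 0$, after which rescaling $t$ gives the stated equation and the identification of $C$. For this I first note $y=t/s_2^3\notin\mathbb C(x)$: its polar divisor is supported on the zeros of $s_2$, i.e.\ over $x=\infty$, so if $y\in\mathbb C(x)$ it would be a polynomial in $x$, and holomorphy of $t=s_2^3\,y(x)$ would force that polynomial to have degree $\le 3$, i.e.\ $t\in\operatorname{Sym}^3V$, contrary to the choice of $t$. Since $[\mathbb C(C):\mathbb C(x)]=\deg|L|=3$ is prime and $y\notin\mathbb C(x)$, the minimal polynomial of $y$ over $\mathbb C(x)$ has degree exactly $3$; hence $c\ne 0$ in the relation above (if $c=0$ it would make $y$ satisfy a nonzero polynomial of degree $\le2$ over $\mathbb C(x)$, or force all $\phi_j=0$), and moreover $[\mathbb C(x,y):\mathbb C(x)]=3$, so $\mathbb C(x,y)=\mathbb C(C)$ and the monic cubic $P(x,y)=y^3+y^2\psi_3(x)+y\psi_6(x)+\psi_9(x)$ is the minimal polynomial of $y$, in particular irreducible, so $C$ is birational to $\{P=0\}$.

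The main — and essentially only — obstacle is this last step: extracting from the dimension count a relation that is genuinely cubic in $y$ (equivalently $c\ne 0$), which hinges on the gonality being the prime $3$ together with the explicit placement of $t$ outside $\operatorname{Sym}^3V$. Everything else is Riemann--Roch, a dimension count, and bookkeeping of pole orders; the converse statement (that a sufficiently general $P$ of this shape yields a smooth trigonal genus-$7$ curve with $h^0(L^{\otimes 3})=5$) is routine and, as in the genus-$6$ case, not required for the proposition as phrased.
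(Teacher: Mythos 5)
Your proof is correct and follows essentially the same route as the paper: deduce $K_C\cong L^{\otimes 4}$ from the Maroni hypothesis via Riemann--Roch, count dimensions in $H^0(L^{\otimes 9})$ (namely $22>21$) to produce a relation $c\,t^3+t^2\phi_3+t\phi_6+\phi_9=0$, and divide by $s_2^9$. Your argument that $c\neq 0$ (via $y\notin\mathbb{C}(x)$ and the primality of the gonality) supplies a justification the paper leaves implicit when it writes the relation with leading coefficient $1$, but otherwise the two proofs coincide.
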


We will now describe the locus of trigonal curves of genus 7 with Maroni degree $k =1$ admitting an asymptotic direction of rank 1 different from a Schiffer variation at a ramification point of the $g^1_3$.

\begin{theorem}
\label{asy7}
Trigonal curves of genus 7 with Maroni degree $k =1$ admitting an asymptotic direction of rank 1 different from a Schiffer variation at a ramification point of the $g^1_3$ satisfy the following equation: 
\begin{equation}
\label{asy7eq}
y^3 + y^2 x \psi_2(x) + y x^2 \psi_4(x) + \psi_9(x),
\end{equation}
where $  \psi_2, \psi_4, \psi_9$ are polynomials of degree $\leq 2,4,9$ and $\psi_9(0) \neq 0$. 

\end{theorem}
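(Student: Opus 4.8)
The plan is to follow the template of the proof of Theorem \ref{asy6}, turning the three conditions \eqref{g}, \eqref{g^2}, \eqref{eqdg} — which, by the theorem immediately preceding, characterise exactly when such a $\zeta$ is asymptotic — into constraints on the coefficients $\psi_3,\psi_6,\psi_9$ of equation \eqref{maroni7eq}. As already set up, $\zeta$ is not a Schiffer variation (a rank $1$ Schiffer variation being asymptotic only at a ramification point, which is excluded here), so by Remark \ref{remmaroni} we may write $\zeta = [\debar(\frac{\rho_D}{\omega})]$ with $\omega = s_1t$ and $D = \operatorname{div}(s_1) = p_1+p_2+p_3$. I would treat the reduced case in detail, the non-reduced case being handled exactly as in the genus $6$ discussion (weighting each point by its multiplicity). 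Put $y_i := y(p_i)$; since $s_1$ and $t$ have disjoint zero divisors and $\langle s_1,s_2\rangle$ is base-point-free, each $y_i$ is finite and nonzero and $g(p_i) = 1/y_i$.

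First I would combine \eqref{maroni7eq} with \eqref{g} and \eqref{g^2}. Evaluating \eqref{maroni7eq} at $p_i$, where $x(p_i)=0$, shows that $y_1,y_2,y_3$ are the roots of $z^3 + \psi_3(0)z^2 + \psi_6(0)z + \psi_9(0)$. On the other hand \eqref{g} and \eqref{g^2} read $\sum_i 1/y_i = 0$ and $\sum_i 1/y_i^2 = 0$, so by Newton's identities the first two elementary symmetric functions $e_1(y),e_2(y)$ of $y_1,y_2,y_3$ vanish; hence the $y_i$ are the three (distinct, nonzero) cube roots of $c := y_1y_2y_3 \neq 0$, and the monic cubic with these roots is $z^3 - c$. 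Comparing coefficients gives
\[
\psi_3(0) = 0,\qquad \psi_6(0) = 0,\qquad \psi_9(0) = -c \neq 0,
\]
so already $\psi_3(x) = x\,\psi_2(x)$ with $\deg\psi_2 \le 2$.

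Then I would extract the extra vanishing from \eqref{eqdg} by a residue computation in the style of Proposition \ref{w}. Writing $P(x,y)$ for the left-hand side of \eqref{maroni7eq} and using $dy = -(P_x/P_y)\,dx$ together with $g = 1/y$, $\psi_3(0)=\psi_6(0)=0$ (so that $P_y(p_i) = 3y_i^2$), and the fact that $x$ is a local coordinate at each $p_i$, one obtains
\[
\operatorname{Res}_{p_i}\frac{dg}{x} \;=\; \frac{y_i^{2}\,\psi_3'(0) + y_i\,\psi_6'(0) + \psi_9'(0)}{3\,y_i^{4}} .
\]
Summing over $i$ and using $y_i^3 = c$ together with $e_1(y) = e_2(y) = 0$ one has $\sum_i y_i^{-2} = \sum_i y_i^{-4} = 0$ and $\sum_i y_i^{-3} = 3/c$, so \eqref{eqdg} collapses to $\psi_6'(0)/c = 0$, i.e.\ $\psi_6'(0) = 0$. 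Therefore $\psi_6(x) = x^2\,\psi_4(x)$ with $\deg\psi_4 \le 4$, and \eqref{maroni7eq} takes precisely the form \eqref{asy7eq} with $\psi_9(0)\neq 0$. (If the converse is wanted as well, it follows by reversing the computation: for a curve of the form \eqref{asy7eq} the three equations hold automatically for the corresponding $\zeta$.)

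I expect the only real obstacle to be the bookkeeping in the last step: correctly identifying which power sums of the $y_i$ enter the residue sum for \eqref{eqdg}, and checking that, after imposing $e_1(y)=e_2(y)=0$ and $y_i^3=c$, exactly the $\psi_6'(0)$-term survives; one must also note that the three cube roots of $c\neq 0$ are distinct, which is what legitimises the coefficient comparison between the two monic cubics. Everything else is a transcription of the genus $6$ argument with exponents adjusted to the genus $7$ data ($K_C = L^{\otimes 4}$, $M = L^{\otimes 3}$, $g = s_2^3/t$, and ambient space $H^0(L^{\otimes 9})=H^0(M^{\otimes 3})$ of dimension $21$, which is what pins down the shape of \eqref{maroni7eq}); in fact this case is slightly simpler, since no rescaling of $t$ is needed to normalise the leading coefficient of the $y^2$-term.
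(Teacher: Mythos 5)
Your proposal is correct and follows essentially the same route as the paper's proof: evaluating \eqref{maroni7eq} at the $p_i$ (where $x=0$) and combining with \eqref{g}, \eqref{g^2} via symmetric-function identities to get $\psi_3(0)=\psi_6(0)=0$, then computing the residues in \eqref{eqdg} through $dy=-(P_x/P_y)\,dx$ to extract $\psi_6'(0)=0$; your residue formula and power-sum bookkeeping ($\sum y_i^{-2}=\sum y_i^{-4}=0$, $\sum y_i^{-3}=3/c$) match the paper's computation exactly. The only (harmless) difference is that you justify the coefficient comparison by noting the $y_i$ are the three distinct cube roots of $c\neq 0$, a point the paper passes over by invoking Vieta directly.
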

\begin{proof}
Set $y_i := y(p_i)$, then equations \eqref{g}, \eqref{g^2} imply that $\sum_{i=1}^3 y_i = 0$, $\sum_{i=1}^3 y^2_i = 0$, $\sum_{i<j} y_i y_j =0$. Since $x(p_i) =0$, $\forall i =1,2,3$, equation \eqref{maroni7eq} gives 
$$ y_i^3 + y_i^2 \psi_3(0) + y_i \psi_6(0) + \psi_9(0) =0, \ \forall i =1,2,3.$$
Hence the equation 
$$z^3 + z^2 \psi_3(0) + z \psi_6(0) + \psi_9(0) =0,$$
has $y_1, y_2, y_3$ as roots. 
So we obtain 
$$\psi_3(0) = - \sum_{i=1}^3 y_i = 0, \ \psi_6(0) = \sum_{i<j} y_i y_j= 0.$$

Hence  equation \eqref{maroni7eq} is of the form 

$$P(x,y) = y^3 + y^2 x \psi_2(x) + y x \psi_5(x) + \psi_9(x),$$

and $\psi_9(0) \neq 0$, since $s_1$ and $t$ do not have common zeros. Now we compute $dg = -\frac{dy}{y^2}$ and $P_x dx + P_y dy =0$, so $dy = - \frac{P_x dx}{P_y}$. 

So equation \eqref{eqdg} becomes 
$$0 = \sum_{i=1}^3  \frac{P_x}{y^2P_y}(p_i) = \sum_{i=1}^3 \frac{y_i^2 \psi_2(0) + y_i \psi_5(0) + \psi'_9(0)}{3 y_i^4} = $$
$$= \frac{1}{3}\sum_{i=1}^3 \frac{ \psi_2(0) }{ y_i^2} + \frac{1}{3}\sum_{i=1}^3 \frac{ \psi_5(0) }{ y_i^3} + \frac{1}{3}\sum_{i=1}^3 \frac{ \psi'_9(0) }{ y_i^4}
= -\frac{\psi_5(0)}{\psi_9(0)},$$
since $\sum_{i=1}^3 \frac{1}{y_i} = \sum_{i=1}^3 \frac{1}{y^2_i} =0$ and $y_i^3 = -\psi_9(0).$ So we get $\psi_5(0) = 0$, and equation \eqref{maroni7eq} is 
$$P(x,y) = y^3 + y^2 x \psi_2(x) + y x^2 \psi_4(x) + \psi_9(x)=0$$
where $\psi_9(0) \neq 0$. 
\end{proof}

\begin{remark}
If $\zeta$ is an asymptotic direction, then it is tangent to the trigonal locus. 
 \end{remark}
 \begin{proof}
By \eqref{tantri}, $\zeta \in T_{{tri}, [C]}$ if and only if $\zeta \cdot \Omega \cdot s_i s_j= 0, \ i,j=1,2$. 

We have: 

$$  \zeta \cdot \Omega \cdot s_1 s_j = \int_C \Omega s_1 s_j \frac{ \overline{\partial} \rho}{s_1 t} = 0, $$
for $j=1,2$, 
 
$$  \zeta \cdot \Omega \cdot s^2_2 = \int_C \Omega s_2^2 \frac{ \overline{\partial} \rho}{s_1 t} =
 \int_C s_2^2 d(\frac{s_1}{s_2} )   s^2_2 \frac{ \overline{\partial} \rho}{s_1 t}=  \int_C\frac{ s_2^3}{t} \overline{\partial} \rho \frac{dx}{x} = \sum_{i =1}^3 g(p_i)=0,$$
if and only if it satisfies equation \eqref{g}.

\end{proof}
\subsection{Genus 5}

Assume $C$ is a non hyperelliptic  trigonal curve of genus 5, denote by $L$ the trigonal line bundle. 

We will show that  also in this case there can exist asymptotic directions that are not Schiffer variations in the ramification points of the $g^1_3$ and we will describe them. Moreover we give a parametrisation of the locus of the trigonal curves of genus 5  and of  the sublocus of those trigonal curves admitting such asymptotic directions.

Then, $K_C \otimes L^{-2}$ has degree 2 and by Riemann Roch $h^0(K_C \otimes L^{-2}) = 1$, so $K_C=L^{\otimes 2}\otimes \cO_C(p+q)  $ for some $p,q \in C$. Then  $M= K_C \otimes L^{\vee} = L(p + q)$.  Set $H^0(L) = \langle s_1, s_2 \rangle$,   $H^0(K_C \otimes L^{\vee}) = \langle \sigma s_1, \sigma s_2, t\rangle$, where $div(\sigma) = p+q$ and $t(p) \neq 0$, $t(q) \neq 0$, since $C$ is not hyperelliptic. 
So we have $H^0(K_C) = H^0(L^{\otimes 2}(p+q)) = \langle \sigma s_1^2, \sigma s_1s_2, \sigma s_2^2, ts_1, t s_2\rangle.$

Set $x = \frac{s_1}{s_2}$, $g = \frac{\sigma s_2}{t}$, $y = \frac{1}{g}$, $D = div(s_1) = p_1 + p_2 +p_3$, and again we can choose $\omega = s_1t$.  In the case $D$  not reduced, the computation is the same, considering the multiplicity of the points $p_i$ and taking the sum over the support of $D$.

We have the following 

\begin{theorem}
With the above notation, let $\zeta = [ \debar(\frac{\rho_D}{\omega})]$ be a deformation of rank 1 that is not a Schiffer variation Then $\zeta$ is asymptotic if and only if equations  \eqref{g}, \eqref{g^2}, \eqref{eqdg} are satisfied, where $x = \frac{s_1}{s_2}$, $g = \frac{\sigma s_2}{t}$. 
\end{theorem}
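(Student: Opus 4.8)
The plan is to argue exactly as in the genus $6$ and genus $7$ cases, the only structural simplification being that here $H^0(M)=H^0(K_C\otimes L^{\vee})=\langle \sigma s_1,\sigma s_2,t\rangle$ is $3$-dimensional, so that $\dim I_2(K_C)=\tfrac{(g-2)(g-3)}{2}=3$ and, by Remark \ref{triqua}, the three rank $\le 4$ quadrics associated with the three pencils of $H^0(M)$ already form a basis of $I_2(K_C)$. Concretely I would set
$$\Gamma_1 = \sigma s_1 s_2\odot ts_2 - ts_1\odot\sigma s_2^2,\quad \Gamma_2 = \sigma s_1^2\odot ts_2 - ts_1\odot\sigma s_1 s_2,\quad \Gamma_3 = \sigma s_1^2\odot\sigma s_2^2 - (\sigma s_1 s_2)^{\odot 2},$$
the images under $\phi$ of $(s_1\wedge s_2)\otimes(\tau_1\wedge\tau_2)$ for $(\tau_1,\tau_2)$ equal to $(\sigma s_2,t)$, $(\sigma s_1,t)$ and $(\sigma s_1,\sigma s_2)$ respectively, and compute the three numbers $II(\Gamma_j)(\zeta\odot\zeta)$; since $II$ is linear in the quadric, $\zeta$ is asymptotic if and only if all three vanish.

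To evaluate each $II(\Gamma_j)(\zeta\odot\zeta)$ I would apply Proposition \ref{w} with $\omega = s_1 t$. In each quadric the four factors are ordered so that the two divisible by $s_1$ occupy the odd slots $\omega_1,\omega_3$; then $\omega_1/\omega$ and $\omega_3/\omega$ have poles contained in $\operatorname{div}(t)=F$, which is disjoint from $D=\operatorname{div}(s_1)$, so $\rho_D(\omega_{2i-1}/\omega)$ is $C^\infty$ and the hypothesis of Proposition \ref{w} is satisfied (the minus signs in $\Gamma_1,\Gamma_2,\Gamma_3$ being absorbed by replacing an odd/even pair $(\omega_3,\omega_4)$ with $(i\omega_3,i\omega_4)$). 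I would then rewrite the relevant quotients $\omega_i/\omega$ in terms of $x=s_1/s_2$ and $g=\sigma s_2/t$, namely $\sigma s_1 s_2/\omega=g$, $\sigma s_2^2/\omega=g/x$, $\sigma s_1^2/\omega=xg$, $ts_2/\omega=1/x$ and $ts_1/\omega=1$, and feed them into the split form $w(\zeta,\omega_a,\omega_b)=2\pi i\sum_{p_i\in\operatorname{Supp}(D)}\operatorname{Res}_{p_i}(g_a\,dg_b)$.

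The residue bookkeeping is then routine and runs parallel to $g=6$: terms of type $g\,dg$ or $dg$ have no residue at the $p_i$, where $g$ is holomorphic, while terms of type $g^2\,dx/x$, $g\,dx/x$ and $dg/x$ produce $\sum_i g^2(p_i)$, $\sum_i g(p_i)$ and $\sum_i\operatorname{Res}_{p_i}(dg/x)$ respectively, using $\operatorname{Res}_{p_i}(dx/x)=1$ when $D=\operatorname{div}(s_1)$ is reduced (the non-reduced case being handled with multiplicities, exactly as in the corresponding remark). This yields, up to nonzero constants, that $II(\Gamma_3)(\zeta\odot\zeta)$ is a multiple of $\sum_i g^2(p_i)$, $II(\Gamma_2)(\zeta\odot\zeta)$ a multiple of $\sum_i g(p_i)$, and $II(\Gamma_1)(\zeta\odot\zeta)$ a multiple of $\sum_i\operatorname{Res}_{p_i}(dg/x)$, so the simultaneous vanishing of $II(Q)(\zeta\odot\zeta)$ for all $Q\in I_2$ is equivalent to \eqref{g}, \eqref{g^2} and \eqref{eqdg} holding at once.

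There is no real obstacle here: the argument is a computation fully analogous to the genus $6$ case. The one point worth a line of comment — rather than a difficulty — is that, since $\dim H^0(M)=3$ and none of $\sigma s_1,\sigma s_2,t$ is divisible by $s_1^2$, every quadric of the chosen basis actually contributes (there is no distinguished proper subfamily to isolate, in contrast with $g=6,7$), and all of them satisfy the applicability condition of Proposition \ref{w} because the poles of the meromorphic functions involved all lie along $\operatorname{div}(t)$, away from the support of $D$.
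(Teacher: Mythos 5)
Your proposal is correct and follows exactly the paper's route: the paper likewise takes the three rank~$\leq 4$ quadrics attached to the three pencils of $H^0(K_C\otimes L^{\vee})=\langle \sigma s_1,\sigma s_2,t\rangle$ as a basis of the $3$-dimensional space $I_2(K_C)$ and evaluates $II$ on them via Proposition \ref{w}, obtaining (up to nonzero constants) $\sum_i g^2(p_i)$, $\sum_i g(p_i)$ and $\sum_i \operatorname{Res}_{p_i}(dg/x)$, exactly as you do. Your residue bookkeeping and the verification of the applicability hypothesis of Proposition \ref{w} (poles only along $\operatorname{div}(t)$, disjoint from $D$) match the computation the paper leaves implicit by reference to the $g=6,7$ cases.
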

\begin{proof}
The space $I_2(K_C)$ is three dimensional and it is generated by the following quadrics: 
$$\Gamma_1 = \sigma s_2^2 \odot s_1t - \sigma s_1 s_2 \odot   s_2t,  \  \Gamma_2 = \sigma s_1^2 \odot s_2t - \sigma s_1 s_2 \odot   s_1t, \ \Gamma_3 = \sigma s_1^2 \odot \sigma s_2^2 - \sigma s_1 s_2 \odot  \sigma s_1 s_2,$$
corresponding to the three pencils of $|K \otimes L^{\vee}|$. 

A straightforward  computation as in the case $g=6,7$ shows that $\zeta$ is asymptotic if and only  equations \eqref{g}, \eqref{g^2}, \eqref{eqdg} are satisfied. 

\end{proof}

Now, by Riemann Roch,  $h^0(M^{\otimes 3}\otimes L^{\otimes 2})= h^0(L^{\otimes 5}(3p +3q))=21-4=17.$  
 Set  $V=H^0(L)$ and  consider 
 $$t^3\cdot Sym^2V+t^2\cdot Sym^3(V) \sigma +t\cdot Sym^4(V) \sigma^2 +Sym^5(V) \sigma^3\subset H^0(M^{\otimes 3}\otimes L^{\otimes 2}).$$  
 Counting dimensions, we see that we have  an equation
 $$\phi_2(s_1,s_2)\cdot t^3+\sigma \phi_3(s_1,s_2)t^2+\sigma^2 \phi_4(s_1,s_2)t+\sigma^3\phi_5(s_1,s_2)=0. $$

Since the curve is not hyperelliptic,  $\phi_2(s_1,s_2)\neq 0$, moreover,  $\sigma \phi_3(s_1,s_2)t^2+\sigma^2 \phi_4(s_1,s_2)+ \sigma^3 \phi_5(s_1,s_2)$ vanishes on $p+q$. Thus we can choose $s_2$ such that $(s_1+s_2) (p)=0$, $(s_1+s_2) (q)\neq 0$, $(s_1-s_2) (p)\neq 0$, $(s_1-s_2) (q)=0$ and $\phi_2(s_1,s_2)=s_1^2 -s_2^2$. 



Dividing the equation by $\sigma^3 s_2^5$,  we get 
$$ y^3(x^2-1) +y^2 \psi_3(x) +y \psi_4(x) +\psi_5(x)=0.$$

So we have proven the following 
\begin{proposition}
\label{maroni5}
Trigonal curves of genus 5 are described by the following equation: 
\begin{equation}
\label{maroni5eq}
y^3(x^2-1) +y^2 \psi_3(x) +y \psi_4(x) +\psi_5(x)=0,\end{equation}
where  $\psi_3, \psi_4, \psi_5$ are polynomials  of degree $\leq 3, 4, 5$. 
\end{proposition}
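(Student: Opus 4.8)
The plan is to mirror, for $g=5$, the argument already carried out for Maroni special curves of genus $6$ and $7$: the core is a Riemann--Roch dimension count that produces a cubic relation for the ``transcendental'' coordinate $y$ over the rational function field $\mathbb{C}(x)$ generated by the $g^1_3$, followed by a normalisation of the leading coefficient. I would keep the setup from the paragraphs preceding the statement: $L$ the trigonal line bundle, $K_C=L^{\otimes 2}(p+q)$, $M=K_C\otimes L^{\vee}=L(p+q)$, $V=H^0(L)=\langle s_1,s_2\rangle$, $H^0(M)=\langle \sigma s_1,\sigma s_2,t\rangle$ with $\operatorname{div}(\sigma)=p+q$ and $t(p),t(q)\neq 0$, and $x=s_1/s_2$, $y=t/(\sigma s_2)$.

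\textbf{Step 1: the cubic relation.} First I would compute $h^0(M^{\otimes 3}\otimes L^{\otimes 2})=h^0(L^{\otimes 5}(3p+3q))$. The divisor has degree $21>2g-2=8$, hence is non‑special, and Riemann--Roch gives $h^0=21-g+1=17$. Next, for $i=0,1,2,3$ the subspace $\sigma^{i}t^{3-i}\cdot Sym^{i+2}V$ sits inside $H^0(M^{\otimes 3}\otimes L^{\otimes 2})$ (each $\sigma^i$ contributes $i(p+q)$ to the zero divisor while $t^{3-i}(p),t^{3-i}(q)\neq 0$), and the total dimension of these four pieces is $3+4+5+6=18>17$. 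Therefore the summation map from $Sym^2V\oplus Sym^3V\oplus Sym^4V\oplus Sym^5V$ has a nonzero kernel, yielding a relation
\[
\phi_2(s_1,s_2)\,t^3+\sigma\,\phi_3(s_1,s_2)\,t^2+\sigma^2\,\phi_4(s_1,s_2)\,t+\sigma^3\,\phi_5(s_1,s_2)=0,
\]
with $\phi_k\in Sym^kV$ not all zero.

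\textbf{Step 2: the leading coefficient, and normalisation.} I would then show $\phi_2\neq 0$. If $\phi_2=0$, cancelling $\sigma$ and dividing by $\sigma^2 s_2^5$ gives $\psi_3(x)y^2+\psi_4(x)y+\psi_5(x)=0$ with $\psi_k=\phi_{k+2}/s_2^{k+2}$ not all zero, so $y$ would be algebraic of degree $\le 2$ over $\mathbb{C}(x)$; but $y\notin\mathbb{C}(x)$ (otherwise $[\,\sigma s_1^2:\sigma s_1s_2:\sigma s_2^2:ts_1:ts_2\,]=[x^2:x:1:xy:y]$ would rationally parametrise the canonical image of $C$, contradicting $g=5$), and $[\mathbb{C}(C):\mathbb{C}(x)]=3$ is prime, so $\mathbb{C}(x,y)=\mathbb{C}(C)$ — a contradiction. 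Rearranging the relation shows $\sigma\mid\phi_2 t^3$, and since $t(p),t(q)\neq 0$ this forces $\operatorname{div}(\phi_2)\ge p+q$ for $\phi_2$ viewed as a section of $L^{\otimes 2}$. Factoring the binary quadratic $\phi_2=c\,\ell_1\ell_2$ over $\mathbb{C}$, in the generic situation where $\ell_1,\ell_2$ are independent and $p\in\operatorname{div}(\ell_1)\setminus\operatorname{div}(\ell_2)$, $q\in\operatorname{div}(\ell_2)\setminus\operatorname{div}(\ell_1)$, I choose the basis $\{s_1,s_2\}$ of $V$ with $s_1\pm s_2$ proportional to $\ell_1,\ell_2$ and rescale $\sigma$ (or $t$), so that $\phi_2=s_1^2-s_2^2$. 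Dividing the relation by $\sigma^3 s_2^5$ and using $x=s_1/s_2$, $y=t/(\sigma s_2)$ turns each term into $\psi_k(x)\,y^{3-k}$ with $\psi_0=x^2-1$ and $\psi_k(x)=\phi_{k+2}(s_1,s_2)/s_2^{k+2}$ a polynomial of degree $\le k+2$ (since $\dim Sym^{k+2}V=k+3$), which is precisely equation \eqref{maroni5eq}. For the converse — so that ``are described by'' becomes a genuine parametrisation of the trigonal locus — one checks that for generic coefficients the affine curve so defined is irreducible, that $x$ induces a base‑point‑free $g^1_3$ on the smooth model, and that a Newton‑polygon / adjunction computation on the homogenised equation gives geometric genus $5$.

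\textbf{The main obstacle.} The delicate points are the ones the bookkeeping above glosses over. First, in Step 2 the normalisation of $\phi_2$ to $s_1^2-s_2^2$ can fail for special configurations: if $\phi_2$ is a perfect square, or if $p,q$ both lie on one linear factor, then $h^0(L(-p-q))\ge 1$ and one lands on a proper sublocus; there one must verify that the normal form is still attainable (possibly after a further change of the pencil) or else describe that sublocus directly. Second, the converse requires knowing that the degree bounds $\le 3,4,5$ are exactly those imposed by the $M$‑model of $C$, so that the $17$‑ (equivalently, after the normalisations, the claimed number of) parameters in \eqref{maroni5eq} dominate an open subset of the $12$‑dimensional trigonal locus; this is the genus/parameter‑count step and is where one must be most careful. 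Everything else reduces to the dimension count and the elementary function‑field argument already sketched.
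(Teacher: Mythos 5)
Your proof follows the paper's argument essentially verbatim: the same Riemann--Roch count $h^0(L^{\otimes 5}(3p+3q))=17<18=\dim\bigl(\bigoplus_{k=2}^{5}Sym^kV\bigr)$ producing the cubic relation in $t$ over $\sigma$, the same observation that $\phi_2t^3$ vanishes on $p+q$ leading to the normalisation $\phi_2=s_1^2-s_2^2$, and the same division by $\sigma^3 s_2^5$. Your write-up is if anything more careful than the paper's, which asserts $\phi_2\neq 0$ and the existence of the normalising basis without the function-field argument or any mention of the degenerate configurations ($\phi_2$ a perfect square, or $h^0(L(-p-q))\geq 1$) that you correctly flag.
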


 We will now describe the locus of trigonal curves of genus 5  admitting an asymptotic direction of rank 1 different from a Schiffer variation at a ramification point of the $g^1_3$. 

\begin{theorem}
\label{asy5}
Trigonal curves of genus 5  admitting an asymptotic direction of rank 1 different from a Schiffer variation at a ramification point of the $g^1_3$ satisfy the following equation: 
\begin{equation}
\label{asy5eq}
y^3(x^2-1) + y^2x\alpha_2(x) + y x^2 \chi_2(x) + \psi_5(x),
\end{equation}
where $  \alpha_2, \chi_2$ are polynomials of degree $\leq 2$, $\psi_5$ is a polynomial of degree $\leq 5$ and $\psi_5(0)=1$. 

\end{theorem}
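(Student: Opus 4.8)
The plan is to follow the same strategy as in the proofs of Theorem~\ref{asy6} and Theorem~\ref{asy7}, translating the three residue conditions \eqref{g}, \eqref{g^2}, \eqref{eqdg} into vanishing conditions on the coefficients of the defining equation \eqref{maroni5eq}. By the preceding theorem, an asymptotic direction $\zeta=[\debar(\frac{\rho_D}{\omega})]$ of rank $1$ which is not a Schiffer variation (with $\omega=s_1t$, $D=\operatorname{div}(s_1)=p_1+p_2+p_3$) exists precisely when equations \eqref{g}, \eqref{g^2}, \eqref{eqdg} hold, where $x=\frac{s_1}{s_2}$ and $g=\frac{\sigma s_2}{t}=\frac1y$. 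So I would start from these three equalities and extract their consequences.

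First I would set $y_i:=y(p_i)$ and note $g(p_i)=1/y_i$, so \eqref{g} and \eqref{g^2} give $\sum_i\frac1{y_i}=0$ and $\sum_i\frac1{y_i^2}=0$, hence also $\sum_{i<j}\frac1{y_iy_j}=0$. Since $x(p_i)=0$, plugging $x=0$ into \eqref{maroni5eq} shows that $y_1,y_2,y_3$ are the three roots of $z^3-\psi_3(0)z^2-\psi_4(0)z-\psi_5(0)$, and $\psi_5(0)=y_1y_2y_3\neq0$ because $s_1$ and $t$ have disjoint zero loci (and the $g^1_3$ is base point free, so $s_2(p_i)\ne0$). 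By Vieta, $\psi_3(0)=\sum_i y_i$ and $\psi_4(0)=-\sum_{i<j}y_iy_j$, hence $\frac{\psi_4(0)}{\psi_5(0)}=-\sum_i\frac1{y_i}=0$ and $\frac{\psi_3(0)}{\psi_5(0)}=\sum_{i<j}\frac1{y_iy_j}=0$, so $\psi_3(0)=\psi_4(0)=0$. Writing $\psi_3(x)=x\alpha_2(x)$, $\psi_4(x)=x\gamma_3(x)$ with $\deg\alpha_2\le2$, $\deg\gamma_3\le3$, and feeding $\psi_3(0)=\psi_4(0)=0$ back into the cubic, I also obtain the key relation $y_i^3=\psi_5(0)$ for $i=1,2,3$, whence $\sum_i\frac1{y_i^3}=\frac3{\psi_5(0)}$ and $\sum_i\frac1{y_i^4}=\frac1{\psi_5(0)}\sum_i\frac1{y_i}=0$.

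The last condition \eqref{eqdg} pins down the remaining coefficient $\gamma_3(0)$. Assuming $D$ reduced (the non-reduced case is identical after inserting multiplicities), $x$ is a local coordinate at each $p_i$, so differentiating $P(x,y)=0$ implicitly gives $dy=-\frac{P_x}{P_y}dx$ and $\frac{dg}{x}=-\frac{dy}{xy^2}=\frac{P_x}{xy^2P_y}dx$, hence $\operatorname{Res}_{p_i}\frac{dg}{x}=\frac{P_x}{y^2P_y}(p_i)$. A direct computation with $P(x,y)=y^3(x^2-1)+y^2\psi_3(x)+y\psi_4(x)+\psi_5(x)$ yields $P_y(0,y_i)=-3y_i^2$ and $P_x(0,y_i)=y_i^2\alpha_2(0)+y_i\gamma_3(0)+\psi_5'(0)$, so $\sum_i\operatorname{Res}_{p_i}\frac{dg}{x}=-\frac13\big(\alpha_2(0)\sum_i\frac1{y_i^2}+\gamma_3(0)\sum_i\frac1{y_i^3}+\psi_5'(0)\sum_i\frac1{y_i^4}\big)=-\frac{\gamma_3(0)}{\psi_5(0)}$. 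Thus \eqref{eqdg} forces $\gamma_3(0)=0$, i.e. $\gamma_3(x)=x\chi_2(x)$ with $\deg\chi_2\le2$, so $\psi_4(x)=x^2\chi_2(x)$. Substituting back, the defining equation becomes $y^3(x^2-1)+y^2x\alpha_2(x)+yx^2\chi_2(x)+\psi_5(x)=0$ with $\psi_5(0)\neq0$, and rescaling $s_1,s_2,t$ by suitable nonzero constants (which rescales $x$ and $y$) normalizes $\psi_5(0)=1$, exactly as in the genus $6$ and $7$ cases.

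I do not expect a deep obstacle here, since the argument parallels Theorems~\ref{asy6} and \ref{asy7}; the delicate point is purely the bookkeeping in the third paragraph: checking that $x$ is a uniformizer at each $p_i$ (hence that $D$ reduced is the generic situation and multiplicities are handled correctly otherwise), getting the sign in $\operatorname{Res}_{p_i}\frac{dg}{x}=\frac{P_x}{y^2P_y}(p_i)$ right, and using $\psi_5(0)=y_1y_2y_3\ne0$ together with $y_i^3=\psi_5(0)$ to see that the three power sums $\sum 1/y_i^2$, $\sum 1/y_i^3$, $\sum 1/y_i^4$ combine so that only the $\gamma_3(0)$ term survives.
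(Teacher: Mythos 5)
Your proposal is correct and follows essentially the same route as the paper: evaluate the defining cubic at $x=0$, use Vieta together with conditions \eqref{g}, \eqref{g^2} to force $\psi_3(0)=\psi_4(0)=0$ (hence $y_i^3=\psi_5(0)$), then implicit differentiation plus condition \eqref{eqdg} to kill $\psi_4'(0)$, and finally rescale to normalize $\psi_5(0)=1$. Your bookkeeping (the sign of $P_y(0,y_i)=-3y_i^2$, the power sums $\sum 1/y_i^k$) agrees with the paper's computation, and is in fact slightly more explicit about why $\sum 1/y_i=\sum 1/y_i^2=0$ yields the vanishing of $\psi_3(0)$ and $\psi_4(0)$.
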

\begin{proof}

Equations \eqref{g}, \eqref{g^2} are equivalent to $\sum_i y_i =0$, $\sum_i y^2_i =0$, $\sum_{i,j} y_i y_j=0,$ where $y_i := y(p_i)$. 
Since $x(p_i) = 0$,   equation \eqref{maroni5eq} gives 

$$-y_i^3 +  y_i^2 \psi_3(0) + y_i\psi_4(0) +  \psi_5(0)=0,  \ \forall i=1,2,3.$$

Hence the polynomial $-z^3  + \psi_3(0) z^2 + \psi_4(0) z  + \psi_5(0)$ has the elements $y_i$ as roots, so $\psi_3(0) = \sum_{i=1}^3 y_i =0$ and $\psi_4(0) = -\sum_{i <j} y_i y_j =0$, while $\psi_5(0) \neq 0$. 

Writing $g$ as a function of $x$, $\frac{dg}{x} = g'(x) \frac{dx}{x}$, so condition \eqref{eqdg} is  $0 = \sum_i g'(p_i) = - \sum_i \frac{y'}{y^2} (p_i)$. 
Since $dy = - \frac{P_x}{P_y} dx$, and 
$$P_x= 2x y^3 + y^2 \psi_3'(x) + y \psi'_4(x) + \psi'_5(x),$$
$$P_y= 3y^2(x^2-1) + 2y \psi_3(x) + \psi_4(x) ,$$

Then 
$$0=\sum_i \frac{y'}{y^2} (p_i) =- \sum_i \frac{y_i^2\psi'_3(0) + y_i \psi'_4(0) + \psi'_5(0)}{3y_i^4}= - \frac{1}{3} \sum_i \frac{\psi'_4(0)}{y_i^3} = - \frac{\psi'_4(0)}{\psi_5(0)}.$$

So, $\psi'_4(0) =0$ and  equation \eqref{maroni5eq} becomes: 

$$P(x,y) = y^3(x^2-1) + y^2x\alpha_2(x) + y x^2 \chi_2(x) + \psi_5(x),  $$

where we can assume $\psi_5(0) = 1$.
\end{proof}

\begin{remark}
Notice that changing $t$ with $t + \lambda \sigma s_1$, the deformation $\zeta$ does not change. Hence we can change $y$ with $y + \lambda x$ in such a way that $\alpha_2(x) = x \alpha_1(x)$, where $\alpha_1$ is a polynomial of degree $\leq 1$. So the equation depends on 10 parameters, while the trigonal locus has dimension 11. 
\end{remark}

\begin{remark}
If $\zeta$ is an asymptotic direction, then it is tangent to the trigonal locus. 
\end{remark}
\begin{proof}


By \eqref{tantri}, the infinitesimal deformation $\zeta = [ \debar(\frac{\rho_D}{\omega})]$  is tangent to the trigonal locus if and only if 

$$ \zeta\cdot \Omega  = \int_C \Omega \frac{ \overline{\partial} \rho}{s_1 t} =
 \int_C s_2^2 d(\frac{s_1}{s_2} )  \wedge   \frac{ \overline{\partial} \rho}{s_1 t}=  \int_C d( d(\frac{s_1}{s_2}) s_2^2 \frac{\rho}{s_1t})= \sum_i Res_{p_i}  d(\frac{s_1}{s_2}) s_2^2 \frac{1}{s_1t} =$$
$$= \sum_i Res_{p_i} g \frac{dx}{x} =  \sum_i g(p_i)=0,$$

which is equation \eqref{g}. 
\end{proof}


  \end{document}